\theoremstyle{definition}
\newtheorem{definition}{Definition}[]
\newtheorem{remark}[definition]{Remark}
\newtheorem{assumption}[definition]{Assumption}
\theoremstyle{plain}
\newtheorem{theorem}[definition]{Theorem}
\newtheorem{corollary}[definition]{Corollary}
\newtheorem{proposition}[definition]{Proposition}
\newtheorem{lemma}[definition]{Lemma}
\begin{document}

\title{Three Dimensional Optimization of Scaffold Porosities for Bone Tissue Engineering}

\author{
    Patrick Dondl \\
    University of Freiburg \\
    \texttt{patrick.dondl@mathematik.uni-freiburg.de}
   \And
    Marius Zeinhofer \\
    University of Freiburg\\
    \texttt{marius.zeinhofer@mathematik.uni-freiburg.de}
}

\maketitle

\begin{abstract}
    We consider the scaffold design optimization problem associated to the three dimensional, time dependent model for scaffold mediated bone regeneration considered in \cite{dondl2021efficient}. We prove existence of optimal scaffold designs and present numerical evidence that optimized scaffolds mitigate stress shielding effects from exterior fixation of the scaffold at the defect site.
\end{abstract}
\keywords{Scaffold Mediated Bone Growth, Optimizing Bone Scaffolds, PDE constrained optimization, Optimal Control}

\section{Introduction}
In this work we extend our previously proposed model from \cite{dondl2021efficient} for bone regeneration in the presence of a bioresorbable porous scaffold to include a scaffold architecture optimization problem that allows geometry and patient dependent optimal scaffold designs. We prove the existence of optimal scaffold density distributions under assumptions on the model's geometry and boundary conditions that are realistic for applications, i.e., non-smooth domains and mixed Dirichlet-Neumann boundary conditions. Furthermore, we present numerical simulations that indicate how to mitigate the negative impact of stress shielding on the bone regeneration process that appears in vivo due to the external fixation of the scaffold at the defect site, as documented in, e.g., \cite{sumner1992determinants, huiskes1992relationship, behrens2008numerical, arabnejad2017fully} in the context of total hip arthroplasty or \cite{viateau07, Terjesen09} for femoral defects.

Our model was previously proposed in \cite{poh2019optimization} and analysed in \cite{dondl2021efficient}. Its essential processes are an interplay between the mechanical and biological environment which we model by a coupled system of PDEs and ODEs. The mechanical environment is represented by a linear elastic equation and the biological environment through reaction-diffusion equations as well as logistic ODEs, modelling signalling molecules and cells/bone respectively. Material properties are incorporated using homogenized quantities not resolving any scaffold microstructure. This makes the model efficient in computations and thus allows to solve the scaffold architecture optimization problem numerically with manageable computational cost.

The article is organized as follows. In follwoing section, we provide a brief introduction to tissue engineering for the treatment of severe bone defects, present our computational model and the corresponding scaffold optimization problem. Then, we discuss its weak formulation in Section~\ref{section:mathematical_formulation}, followed by the presentation of the main analytical results in Section~\ref{section:main_results} together with an outline of their proofs. We proceed with the discussion of the numerical simulations in Section~\ref{section:simulations}. The detailed proofs of the main analytical results are provided in Appendix~\ref{section:proofs_of_the_main_results}.

\subsection{Scaffold Mediated Bone Growth}\label{section:scaffold_mediated_bone_growth}
The treatment of critical-sized bone defects ($>$25 mm) and restoration of skeletal functions is challenging using current treatment options, see \cite{nauth2018critical}. Non-unions of the defect, i.e.,\ when no bridging is achieved after $>$9 months and healing stagnates for 3 months, pose severe problems for patients, as discussed in \cite{calori2017non}, and have a relevant financial dimension. For the UK \cite{stewart2019fracture} estimates the healthcare costs to $\pounds 320$ million annually. Further, healing prospects can be worsened by comorbidities such as diabetes, which is associated with compromised bone regeneration capacity, see \cite{marin2018impact}.

Recent research illustrates the potential of porous, bio-resorbable support structures, as temporary support. These structures are called scaffolds an are implanted in the defect site to provide stability, allow vascularization and guidance for new bone formation. Promising results were recently obtained in vivo and in clinical studies. We refer to \cite{petersen2018biomaterial, cipitria2012porous, paris2017scaffold, petersen2018biomaterial, pobloth2018mechanobiologically}. These works also indicate that material choice and scaffold design are critical variables for a successful healing outcome yet they are at present not fully understood.

Several objectives need to be considered for the scaffold design, including (a) the pore size, porosity and shape of the microstructure, which has a significant influence on vascularization, cell proliferation and cell differentiation; (b) the mechanical properties of the scaffold need to guarantee a proper strain distribution within the defect site in order for bone to grow; (c) information concerning comorbitities of the patients that imply reduced bone growths or bone density as caused by diabetes need to be incorporated in the scaffold design. Hence, designing scaffolds that are optimized for the patient and defect site at hand are of fundamental importance. With the possibilities of additive manufacturing, personalized scaffold designs are within reach.

So far, the question of optimal scaffold design has be dominated by trial-and-error approaches. However, this workflow is expensive and prohibits patient specific designs. Topology optimization techniques have successfully been exploited for optimal design questions yielding scaffolds that meet elastic optimality with given porosity or fluid permeability, as shown in \cite{dias2014optimization, coelho2015bioresorbable, lin2004novel, guest2006optimizing, challis2012computationally, kang2010topology,Wang:2016js, dondl_bone_shape_opt}. However, these methods usually lack the ability to resolve the time dependence of the elastic moduli in scaffold mediated bone growth that are integral to incorporate for an optimal scaffold design.

Based on the previous studies \cite{poh2019optimization, dondl2021efficient}, we propose a PDE constrained optimization problem for the optimal porosity (or, equivalently, density) distribution of a scaffold. The advantages of optimized scaffolds include (a), the mitigation of stress shielding effects, which are caused by external fixation of the defect site and result in areas of low stress within the scaffold. This leads to poor bone regeneration, as mechanical stimulus is indispensable for bone growth. (b) Optimized scaffolds can be designed patient dependent by altering the model's parameters. It is straight forward to include a reduced bone regeneration capability due to, e.g., diabetes into the model.

Note that the model proposed in \cite{dondl2021efficient} does not resolve the porous micro-structure of the scaffold design, but uses coarse-grained values instead, i.e., values averaged over a volume representative of the scaffold microstructure (a representative volume element, RVE), and thus optimizes only these macroscopic quantities. In a scaffold based on a unit cell design, the scaffold volume fraction (or equivalently, the porosity) changes on a larger length-scale than the unit cell design. Likewise, the other quantities of the model can be viewed as locally averaged values. We stress that this homogenization viewpoint of the model is essential for the feasibility of the optimization problem and we refer to \cite{dondl2021efficient} for more information concerning this viewpoint. The resulting optimal scaffold porosity distributions can easily be introduced in a 3d printable, periodic microstructure based, scaffold design: one simply considers a one-parameter family of microstructures, where the thickness of struts or surfaces in the microstructure is given by the parameter. The parameter can then be chosen non-constant over the scaffold domain such that at each periodic unit cell the correct scaffold density is recovered as an average.

The main contributions of this article are as follows. We analyze the optimal scaffold design problem mathematically and prove the existence of optimal scaffold designs under realistic assumptions on the geometry of the computational domain, the boundary conditions and functional relationships in Section~\ref{section:main_results}. The existence of such scaffolds is a necessary prerequisite for a successful numerical treatment of the optimization problem. In Section~\ref{section:simulations} we then investigate numerically how optimized scaffold designs mitigate the negative impacts of stress shielding. Our findings suggest that our three dimensional optimization routine is able to successfully mitigate adverse stress shielding effects that may arise from external scaffold fixation.

\subsection{The System of Equations}\label{section:the_system_of_equations}
The underlying paradigm of the model is that an interplay of the biological and the mechanical environment are responsible for bone growth where the mechanical environment is described through displacements and strains and the biological environment through bio-active molecules (signalling molecules) and different cell types. The model is a coupled system of evolution equations composed of a linear elastic equilibrium equation for every point in time, diffusion equations for the bio-active molecules and ordinary differential equations for the concentration of osteoblasts and the volume fraction of bone. The model considered here is a concrete instance of the more general model proposed in \cite{dondl2021efficient}. 

By $\Omega \subset \mathbb{R}^3$ we denote the computational domain, i.e.,\ the bone defect site. We let $I = [0,T]$ be a finite time interval. On $\Omega$ we keep track of the local scaffold volume fraction which we call $\rho(x)$, where $x\in\Omega$. Therefore the porosity $\theta$ is $\theta(x) = 1 - \rho(x)$, but we use only $\rho$. We do not resolve a time dependency for $\rho$. This is due to the experimental findings in \cite{pitt1981aliphatic} which have shown that, in the time-frame of two years, PCL degrades largely due to bulk erosion. Only the molecular mass decreases which we model by the exponential decay $\sigma (t) = e^{-k_1 t}$. Thus product $\rho(x)\cdot\sigma(t)$ quantifies the mechanical properties of PCL over time and space. We denote the bone volume fraction averaged over a RVE by $b(t,x)$ and the variables $b, \sigma$ and $\rho$ then determine the material properties of the bone-scaffold composite. We work in the linear elastic regime and use the notation $\mathbb{C}(\rho,\sigma,b)$ for the elastic tensor capturing the material properties.

Besides isotropy and ellipticity, we assume little for the tensor $\mathbb{C}(\rho,\sigma,b)$. Choosing a microstructure allows to explicitely specify $\mathbb{C}(\rho, \sigma, b)$ in numerical applications. By $u(t,x)$ we denote the displacement field satisfying mechanical equilibrium equations, see \eqref{StrongElasticEquation}. We denote the strain by $\varepsilon(u)$. 
	
We represent the biological environment through two bio-active molecules $a_1(t,x)$ and $a_2(t,x)$, which should be viewed as endogenous angiogenic and osteoinductive factors. These molecules diffuse depending on the scaffold density $\rho$ which we model by $D(\rho)$ in the equation \eqref{StrongDiffusionEquation}. We leave this as an abstract functional relationship for the same reasons as discussed in the context of the elastic tensor. Further, we assume exponential decay and production of bio-active molecules in the presence of strain and a local density of osteoblast cells which are denoted by $c(t,x)$. Essential for the production of growth factors is mechanical stimulus $S(\varepsilon(u))$. The quantity $S(\varepsilon(u))$ is derived from strain, admissible choices include the strains magnitude or octahedral shear strains. We allow flexibility in the choice of $S$, see also the discussion in Section \ref{section:mathematical_formulation}. The strain dependent reaction term is motivated by Wolff's law for bone remodeling, see \cite{wolff1892gesetz}. Strain as a driving force for bone regeneration is also supported by more recent work, for example in \cite{ruff2006s}. Note that the concentration of bio-active molecules is normalized to unity in healthy tissue. This dictates the decay and production rates in concrete simulations.

The equation \eqref{StrongCellODE} governs the production of osteoinductive cells (here: osteoblasts) and is modeled by logistic growth with a driving factor depending on both bio-active molecules $a_1$ and $a_2$ and a proliferation term $(1+k_7c)$. The factor $1-c(1-\rho)^{-1}$ encodes the osteoblast carrying capacity and implies that the osteoblast density is bounded by the amount of available space, i.e., the space not filled by the scaffold. We neglect diffusion in this equation as osteoblasts diffuse on a significantly lower level than the bio-active molecules. Modeling only one cell type in our present model is a simplification and an extension of the model is easily feasible. The equation for bone growth \eqref{StrongODE} is similar to the one for osteoblast concentration. We remark that osteoblast and bone do not compete for space, this encodes the assumption that, e.g., $b=1$ and $c=1$ means that osteoblasts reside ``in saturation'' within healthy bone. Our system of equations is 
\begin{align}
		0 &= \operatorname{div}\big(
		\mathbb{C}(\rho,\sigma,b)\varepsilon(u)
		\big) \label{StrongElasticEquation}
	    &\parbox{14em}{(mechanical equilibrium)}
		\\
		d_ta_i 
		&=
		\label{StrongDiffusionEquation}
		\operatorname{div}\big(
		D(\rho)\nabla a_i
		\big)
		+
		k_{2,i}S(\varepsilon(u)) c
		-
		k_{3,i}a_i
		&\parbox{14em}{(diffusion, generation, and decay of $i=1,2$ bio-molecules)}
		\\
		d_tc 
		&=
		\label{StrongCellODE}
		k_6a_1a_2(1+k_7c)\bigg(1 - \frac{c}{1-\rho}
		\bigg)
		&\parbox{14em}{(osteoblast generation)}
		\\
		d_tb
		&=		
		\label{StrongODE}
		k_4a_1c\bigg(
		1-\frac{b}{1-\rho}
		\bigg)
		&\parbox{14em}{(bone regeneration driven by $a,b$ and $c$).}
	\end{align}
	In the above system $k_1,k_{2,i}, k_{3,i}, k_4, k_6, k_7\geq0$, $i=1,2$ are constants that need to be determined from experiments, compare to Section \ref{section:simulations} where we discuss certain choices. The functional relationships $\mathbb{C}, D(\rho)$ and $S(\cdot)$ are all required to satisfy certain technical assumptions that guarantee the well-posedness of the above system. We discuss this in detail in Section \ref{section:mathematical_formulation}. For concrete examples of the functional relationships we refer to \cite{dondl2021efficient}.

Finally, we need to specify boundary conditions. For the elastic equilibrium equation we allow mixed boundary conditions including the limiting cases of a pure displacement boundary condition and a pure stress boundary condition. As for the bio-active molecules we assume that these are in saturation, i.e., $a(t,x) = 1$ adjacent to bone and on the rest of the boundary of $\Omega$ we assume no-flux boundary conditions. For the initial time-point we propose $a_i(0,x) = a_{i,0} = 0$, for $i=1,2$ inside of $\Omega$. This choice reflects the scenario of a scaffold that is not preseeded with exogenous growth factors. However, different choices of $a_{i,0}$ are admissible and allow the model to cover e.g., pre-seeding with osteoinductive factors. Finally, at the initial time we assume that no osteoblasts and no regenerated bone are present inside the domain of computation. In formulas, it holds for all $i = 1,2$
	\begin{align}
		a_i(0,x)\label{InitialMolecules}
		&=
		0
		&\parbox{18em}{for all $x\in\Omega$}
		\\
		a_i(t,x)\label{BoundaryDirichletMolecules}
		&=
		1
		&\parbox{18em}{for all $t\in I$, $x$ adjacent to healthy bone}
		\\
		D_i^\rho\nabla a_i(t,x)\cdot\eta
		&=
		0
		&\parbox{18em}{for all $t\in I$, $x$ not adjacent to healthy bone} \label{BoundaryNeumannMolecules}
		\\ 
		\big(
		\mathbb{C}(\rho,\sigma,b)\varepsilon(u(t,x))
		\big)\cdot\eta
		&=
		g_N(x)
		&\parbox{18em}{on the Neumann boundary of $\Omega$}
		\\ 
		u(t,x)
		&=
		g_D(x)
		&\parbox{18em}{on the Dirichlet boundary of $\Omega$}
		\\ 
		c(0,x) = b(0,x)\label{InitialBone}
		&=
		0
		&\parbox{18em}{for all $x\in \Omega$.}
	\end{align}
	One may also consider Robin type boundary conditions for the diffusion equations instead of equation \eqref{BoundaryNeumannMolecules}. The model allows for a time dependent choice of the mechanical loading $g_D$ and $g_N$. Due to the long regeneration time horizon of approximately $12$ months, however, it is not expedient to resolve very short time-scales of, e.g., the mechanics of physical therapy. Instead, we consider suitably time-averaged loading conditions here.
\subsection{The Optimization Problem}\label{section:the_optimization_problem}
    In the system \eqref{StrongElasticEquation} - \eqref{StrongODE} above, the function $\rho$, i.e., the scaffold's volume fraction, is a design parameter -- also called control variable -- that we can control in applications. For example, a given scaffold volume fraction distribution could be additively manufactured. Given a certain control variable $\rho$, we denote the solution of the system \eqref{StrongElasticEquation} - \eqref{StrongODE} by  
    \begin{equation*}
     y_\rho \coloneqq (u_\rho,a^1_\rho, a_\rho^2,c_\rho,b_\rho)   
    \end{equation*}
    to stress the dependency on $\rho$. We will also use the notation $\phi(\rho) = y_\rho$ and refer to $\phi$ as the solution operator of the system \eqref{StrongElasticEquation} - \eqref{StrongODE}. Depending on the state $y_\rho$, we can measure the control variable's performance by the value of an objective function $J$ evaluated at $\rho$ and $y_\rho$. We are interested in minimizing or maximizing the objective function over the set of admissible control variables. In other words, we are interested in the optimization problem of finding
    \begin{equation}\label{eq:optimization_problem_heuristically}
         \underset{\rho}{\operatorname{argmin}}J(\rho,y_\rho) \quad \text{subjected to} \quad \rho\in P,
    \end{equation}
	where the set $P$ encodes for example that $\rho$ takes values in the unit interval (necessary for a reasonable volume fraction). The fact that corresponding to $\rho$, we consider the solution $y_\rho$ makes this a PDE-constrained optimization problem and $\rho \in P$ introduces box constraints on the control variable. The concrete form of $J$ is an engineering choice. For instance, the amount of regenerated bone at a certain time-point in the healing process should be maximized. Another alternative we pursue is to maximize the temporal minimum of the elastic modulus. In the case of a hard load for the elastic equation, the elastic modulus at a time-point $t$ is proportional to the elastic energy $\mathcal{E}(t)$, i.e.,
	\begin{equation*}
	    \mathcal{E}_\rho(t) = \frac12\int_\Omega \mathbb{C}(\rho,\sigma(t),b(t))\varepsilon(u(t)):\varepsilon(u(t))\mathrm dx,
	\end{equation*}
	where $u$ and $b$ solve the system \eqref{StrongElasticEquation} - \eqref{StrongODE} corresponding to $\rho$. The minimum of $\mathcal{E}$ over the whole regeneration process describes the weakest state of the bone-scaffold structure during healing. This gives rise to the objective function
	\begin{equation*}
	    \hat J (\rho) = \min_{t\in I}\mathcal{E}_\rho(t)
	\end{equation*}
	and the \emph{maximization} problem of finding
	\begin{equation*}
	   \rho^* \in \underset{\rho \in P}{\operatorname{argmax}}\hat J(\rho).
	\end{equation*}
	The set $P$ encodes pointwise constraints on $\rho$, i.e., the necessity of enforcing $\rho(x) \in [0,1]$ in order to be a meaningful volume fraction. The notation $\hat J$ instead of $J$ is chosen to indicate that the variables $u$ \& $b$ appearing in the definition of $\mathcal{E}_\rho$ are solving the system \eqref{StrongElasticEquation} - \eqref{StrongODE}. Usually, $\hat J$ is called the reduced objective function to distinguish it from the objective function $J$ that does not require $u$ and $b$ to solve the PDE system. If we use a soft load instead of a hard load, the elastic modulus is proportional to the inverse of the elastic energy, hence the objective function becomes
	\begin{equation*}
	    \hat J(\rho) = \max_{t\in I}\mathcal{E}_\rho(t)
	\end{equation*}
	and the optimization consists of finding
	\begin{equation*}
	    \rho^* \in \underset{\rho \in P}{\operatorname{argmin}}\hat J(\rho), 
	\end{equation*}
	i.e., is a \emph{minimization} problem.
	\begin{remark}
	    The proposed objective functions are not smooth as they involve minimizing or maximizing over $t\in I$. For a numerical implementation, one might therefore approximate the minimum or maximum functional by an $L^p(I)$ norm with large value for $-p$ or $p$ respectively.
	\end{remark}
	Another choice of objective function is to consider the amount of regenerated bone after a given time $T$. This results in the definition
	\begin{equation*}
	    \hat J(\rho) = \int_\Omega b(T)\mathrm dx
	\end{equation*}
	\begin{remark}
	    Care needs to be taken with respect to the functional relationships in the system \eqref{StrongElasticEquation} - \eqref{StrongODE} when choosing the amount of regenerated bone as an objective. This requires an adequate choice of $S(\cdot)$. If $S(\cdot)$ is chosen to be the Frobenius norm, the above objective function promotes very weak scaffolds as these lead to high strains and high bone growth. A more sensible choice for $S(\cdot)$ in this case is to use a filter, i.e., only strains with a certain range of magnitude lead to non-vanishing values of $S(\cdot)$. 
	\end{remark}

\section{Mathematical Formulation}\label{section:mathematical_formulation}

\subsection{Notation and Preliminaries}\label{section:notation_and_preliminaries}
By $X$ we usually denote a generic Banach space and with $X^*$ we denote its dual space. The dual pairing for $f\in X^*$ and $x\in X$ is denoted by $\langle f,x \rangle_X$. If a sequence $(x_n)_{n\in\mathbb{N}}$ converges weakly in $X$ to $x$ we write $x_n\rightharpoonup x$. For an interval $I=[0,T]$ and $p\in[1,\infty]$ we denote the Bochner space of $p$-integrable functions with values in $X$ by $L^p(I,X)$, see for instance \cite{diestel1977vector}. Further, we write $W^{1,p}(I,X,Y)$ for the vector valued Sobolev space consisting of functions $u\in L^p(I,X)$ with distributional derivative $d_tu \in L^p(I,Y)$ where $X$ and $Y$ are Banach spaces with $X\hookrightarrow Y$, see for instance \cite{boyer2012mathematical}.

We say a bounded, open set $\Omega\subset\mathbb{R}^d$ is a Lipschitz domain if $\overline{\Omega}$ is a Lipschitz manifold with boundary, see \cite[Definition 1.2.1.2]{grisvard2011elliptic}.
In the following we will denote the cube $[-1,1]^n\subset\mathbb{R}^d$ by $Q$, its half $\{x\in Q\mid x_d <0\}$ by $Q_-$, the hyperplane $\{ x\in Q \mid x_d = 0 \}$ by $\Sigma$ and  $\{x\in\Sigma\mid x_{d-1}<0\}$ by $\Sigma_0$. The following definition is due to Gr\"oger, see \cite{groger1989aw}.
\begin{definition}[Gr\"oger Regular Sets]\label{definition:groger_regular_sets}
    Let $\Omega\subset\mathbb{R}^d$ be bounded and open and $\Gamma\subset\partial\Omega$ a relatively open set. We call $\Omega\cup\Gamma$ Gr\"oger regular, if for every $x\in\partial\Omega$ there are open sets $U,V\subset\mathbb{R}^d$ with $x\in U$, and a bijective, bi-Lipschitz map $\phi:U\to V$, such that $\phi(x) = 0$ and $\phi(U\cap (\Omega\cup\Gamma) )$ is either $Q_-$, $Q_-\cup\Sigma$ or $Q_-\cup\Sigma_0$.
\end{definition}
It can easily be seen that a Gr\"oger regular set $\Omega$ (no matter the choice $\Gamma \subset \partial\Omega$) is a Lipschitz domain, see \cite[Theorem 5.1]{haller2009holder}. The requirement of Gr\"oger regularity is very mild and all applications we have in mind fall in this category. This claim is justified by the following useful characterization of Gr\"oger regular sets in two and three dimensions that allow to check Gr\"oger regularity almost ``by appearance''. The results are due to \cite{haller2009holder}.
\begin{theorem}[[Gr\"oger Regular Sets in 2D, Theorem 5.2 in \cite{haller2009holder}]]\label{theorem:groger_regular_sets_in_2d}
    Let $\Omega \subset \mathbb{R}^2$ be a Lipschitz domain and $\Gamma\subset\partial\Omega$ be relatively open. Then $\Omega\cup\Gamma$ is Gr\"oger regular if and only if $\overline{\Gamma}\cap(\partial\Omega\setminus\Gamma)$ is finite and no connected component of $\partial\Omega\setminus\Gamma$ consists of a single point.
\end{theorem}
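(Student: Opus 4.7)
The plan is to prove the two implications of the biconditional separately, exploiting the fact that in two dimensions the boundary $\partial\Omega$ of a Lipschitz domain is locally a Lipschitz curve, i.e., a $1$-manifold with an elementary topological structure that allows a clean dichotomy on the two ``sides'' of any boundary point.

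For the direction $(\Rightarrow)$, assume $\Omega\cup\Gamma$ is Gröger regular. Since $\Omega$ is bounded, $\partial\Omega$ is compact, so I would cover it by finitely many Gröger charts $\phi_i\colon U_i\to V_i$ with centres $x_i\in\partial\Omega$. Writing $F\coloneqq\partial\Omega\setminus\Gamma$, any transition point $p\in\overline{\Gamma}\cap F$ lying in some $U_i$ must have chart type $Q_-\cup\Sigma_0$ at $x_i$: in the $Q_-$ type one reads off $U_i\cap\Gamma=\emptyset$, which contradicts $p\in\overline{\Gamma}$, and in the $Q_-\cup\Sigma$ type one has $U_i\cap F=\emptyset$, contradicting $p\in F$. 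Within a $Q_-\cup\Sigma_0$ chart, $\phi_i(U_i\cap\overline{\Gamma})\subseteq\overline{\Sigma_0}$ and $\phi_i(U_i\cap F)=\Sigma\setminus\Sigma_0$, whose intersection is the single point $\{0\}$; hence $p=x_i$. Thus each chart contributes at most one transition point, so the set $\overline{\Gamma}\cap F$ is finite. To rule out single-point components of $F$ I apply the same trichotomy to a given $p\in F$: the image of $U\cap F$ is always a non-degenerate interval in $\Sigma$ (either $\Sigma$ for type $Q_-$, empty for type $Q_-\cup\Sigma$ — which is impossible since $p\in F$ — or $\Sigma\setminus\Sigma_0$ for type $Q_-\cup\Sigma_0$), so $p$ lies in an arc-component of $F$ and $\{p\}$ cannot be a component.

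For the converse $(\Leftarrow)$, I would construct Gröger charts pointwise by starting from a standard Lipschitz chart that straightens $\partial\Omega$ near $p$ onto $\Sigma$. If $p\in\Gamma$, openness of $\Gamma$ lets me shrink $U$ so that $U\cap\partial\Omega\subset\Gamma$, yielding a $Q_-\cup\Sigma$ chart; if $p\in F\setminus\overline{\Gamma}$, I shrink $U$ to avoid $\Gamma$ and obtain a $Q_-$ chart. The decisive case is $p\in\overline{\Gamma}\cap F$. By finiteness of the transition set I can choose $U$ so small that $p$ is the only transition point in $U\cap\partial\Omega$; the boundary arc then splits as $\{p\}\cup\alpha_+\cup\alpha_-$. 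On each open arc $\alpha_\pm$ the sets $\alpha_\pm\cap\Gamma$ and $\alpha_\pm\cap F$ are both relatively open — the first because $\Gamma$ is open, the second because any point of $F\cap\alpha_\pm$ that failed to be an interior point of $F$ in $\partial\Omega$ would itself be a transition point — so connectedness of $\alpha_\pm$ forces each arc to lie entirely in $\Gamma$ or entirely in $F$.

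It remains to rule out the two degenerate configurations. Both arcs in $F$ would place a whole neighbourhood of $p$ in $F$, contradicting $p\in\overline{\Gamma}$; both arcs in $\Gamma$ would make $\{p\}$ clopen in $F$, hence a singleton component, contradicting the hypothesis. Therefore exactly one of $\alpha_\pm$ lies in $\Gamma$ and the other in $F$, and post-composing the straightening chart with the reflection $(x_1,x_2)\mapsto(-x_1,x_2)$ if necessary aligns the $\Gamma$-arc with $\Sigma_0$, producing the required $Q_-\cup\Sigma_0$ chart. The main obstacle is this last dichotomy on the two transition arcs; the argument is clean only because $\partial\Omega$ is $1$-dimensional so that every boundary point has precisely two sides, which is exactly why the statement is specific to two dimensions and why its higher-dimensional analogue requires substantially more delicate geometric hypotheses on the interface $\overline{\Gamma}\cap F$.
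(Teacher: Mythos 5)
The paper itself offers no proof of this statement: it is quoted verbatim from Theorem 5.2 of \cite{haller2009holder}, so there is no internal argument to compare yours with, and I can only assess your attempt on its own terms. Your converse direction is sound: straightening charts for points of $\Gamma$ and of $(\partial\Omega\setminus\Gamma)\setminus\overline{\Gamma}$, the arc dichotomy at an isolated transition point (using relative openness of $\Gamma$, finiteness of the transition set to make each subarc homogeneous, and the no-singleton-component hypothesis to exclude the case that both subarcs lie in $\Gamma$), and a final reflection to align the $\Gamma$-arc with $\Sigma_0$ — this is a correct construction of the three chart types.

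The forward direction, however, contains a genuine gap. The definition of Gr\"oger regularity only prescribes the image $\phi(U\cap(\Omega\cup\Gamma))$; it does \emph{not} prescribe $\phi(U\cap\partial\Omega)$. Consequently a chart of type $Q_-\cup\Sigma$ need not satisfy $U\cap(\partial\Omega\setminus\Gamma)=\emptyset$, and a transition point lying in a chart domain need not be the chart's centre: points of $U\cap(\partial\Omega\setminus\Gamma)$ are only constrained to map into the closure of the model set minus the model set, which includes its lateral (curved) part, not merely $\Sigma\setminus\Sigma_0$. Concretely, let $\Omega$ be the open lower half-disc, $\Gamma$ its open diameter, $U$ a large ball about the origin and $\phi$ a bi-Lipschitz map on $U$ fixing $0$ that carries $\Omega\cup\Gamma$ onto the model set $Q_-\cup\Sigma$; this is an admissible $Q_-\cup\Sigma$-type chart at $0$ whose domain contains \emph{both} transition points $(\pm1,0)$. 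Hence your claims ``type $Q_-\cup\Sigma$ forces $U_i\cap F=\emptyset$'' and ``each chart contributes at most one transition point, namely its centre'' are false as stated, and they carry your finiteness argument. The repair is available but must be made: in any chart, a transition point $p\in U$ satisfies $\phi(p)\in\overline{\phi(U\cap\Gamma)}\cap\overline{\phi\left(U\setminus(\Omega\cup\Gamma)\right)}$, and for each of the three model sets this intersection is contained in a set of at most two points in dimension two (the endpoints of the flat part), so each chart of a finite cover contains at most two transition points and finiteness follows. Similarly, in the no-singleton-component step the identity $\phi(U\cap F)=\Sigma\setminus\Sigma_0$ is unjustified; instead, use the chart centred at $p\in\partial\Omega\setminus\Gamma$ (type $Q_-\cup\Sigma$ is impossible there simply because $0\in\Sigma$ would force $p\in\Omega\cup\Gamma$), and observe that the preimage of a short segment of $\Sigma$ emanating from $0$ on the non-$\Sigma_0$ side consists of boundary points outside $\Omega\cup\Gamma$, giving a nondegenerate connected subset of $\partial\Omega\setminus\Gamma$ through $p$.
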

\begin{theorem}[Gr\"oger Regular Sets in 3D, Theorem 5.4 in \cite{haller2009holder}]\label{theorem:groger_regular_sets_in_3d}
    Let $\Omega \subset \mathbb{R}^3$ be a Lipschitz domain and $\Gamma\subset\partial\Omega$ be relatively open. Then $\Omega\cup\Gamma$ is Gr\"oger regular if and only if the following two conditions hold
    \begin{itemize}
        \item [(i)] $\partial\Omega\setminus\Gamma$ is the closure of its interior.
        \item [(ii)] For any $x\in \overline{\Gamma}\cap(\partial\Omega\setminus\Gamma)$ there is an open neighborhood $U_x$ of $x$ and a bi-Lipschitz map $\phi:U_x\cap\overline{\Gamma}\cap(\partial\Omega\setminus\Gamma)\to (-1,1)$.
    \end{itemize}
\end{theorem}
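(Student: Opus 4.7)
The plan is to translate between the chart description in Definition~\ref{definition:groger_regular_sets} and the intrinsic conditions (i), (ii) by working in a local Lipschitz chart at every boundary point. Fix $x \in \partial\Omega$. Because $\Omega$ is already a Lipschitz domain, there is a bi-Lipschitz $\psi: U \to V$ with $\psi(x) = 0$, $\psi(U \cap \Omega) = Q_-$ and $\psi(U \cap \partial\Omega) = \Sigma$. This is the workhorse for both implications; I will use $\psi$ directly in the easy cases and modify it in a controlled way at transition points.

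For the necessity direction I would fix the Gr\"oger chart $\phi$ at $x$ and inspect the three admissible images one by one. If $\phi(U \cap (\Omega \cup \Gamma)) = Q_-$ then a neighborhood of $x$ in $\partial\Omega$ avoids $\Gamma$, so $x$ is interior to $\partial\Omega \setminus \Gamma$. If the image is $Q_- \cup \Sigma$ then $x \in \Gamma$ and neither (i) nor (ii) needs to be checked at $x$. If the image is $Q_- \cup \Sigma_0$ then $\phi$ identifies $U \cap \Gamma$ with $\Sigma_0 = \Sigma \cap \{x_2 < 0\}$ and $U \cap (\partial\Omega \setminus \Gamma)$ with $\Sigma \cap \{x_2 \geq 0\}$; the latter is the closure in $\Sigma$ of the open two-dimensional region $\Sigma \cap \{x_2 > 0\}$, so pulling back through $\phi^{-1}$ witnesses (i), and $\overline{\Sigma_0} \cap (\Sigma \setminus \Sigma_0)$ is the straight segment $\{x_2 = x_3 = 0, -1 < x_1 < 1\}$, bi-Lipschitz to $(-1,1)$, which supplies the map required in (ii).

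For the sufficiency direction, the two easy cases are dispatched with $\psi$ alone after shrinking $U$: if $x \in \Gamma$, openness of $\Gamma$ gives image $Q_- \cup \Sigma$, and if $x$ is interior to $\partial\Omega \setminus \Gamma$, disjointness from $\Gamma$ gives image $Q_-$. The remaining case $x \in \overline{\Gamma} \cap (\partial\Omega \setminus \Gamma)$ is the essential one. Pushing forward through $\psi$, the problem becomes planar: inside $\Sigma \subset \mathbb{R}^2$ near $0$ we have a relatively open set $G := \psi(U \cap \Gamma)$, its complement $D := \Sigma \setminus G$ which equals the closure of its interior by (i), and the transition set $T$ which is the image of a bi-Lipschitz map from an interval by (ii). The task reduces to constructing a planar bi-Lipschitz map that straightens $T$ onto $\{x_2 = 0\} \cap \Sigma$ sending $G$ to $\Sigma_0$ and the interior of $D$ to $\Sigma \cap \{x_2 > 0\}$; extending this trivially in the $x_3$ direction over $Q_-$ and composing with $\psi$ yields the required Gr\"oger chart with image $Q_- \cup \Sigma_0$.

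The main obstacle is this planar straightening step, and I plan to handle it with a tubular-neighborhood construction. Let $\gamma: (-1,1) \to T$ denote the inverse of the bi-Lipschitz map from (ii), possibly composed with $\psi$. I would set
\begin{equation*}
    \Phi(y_1, y_2) = \gamma(y_1) + y_2 \, \nu(y_1),
\end{equation*}
where $\nu$ is a Lipschitz unit vector field along $\gamma$, transverse to the almost-everywhere-defined tangent $\gamma'$, and pointing into the interior of $D$. Such a $\nu$ is produced by Rademacher's theorem and a mollification argument, using (i) to guarantee that the ``into $D$'' side is genuinely two-dimensional so the orientation is well defined. Bi-Lipschitz bounds for $\Phi$ follow from the bi-Lipschitz bounds on $\gamma$ together with uniform transversality of $\nu$; the image identification $\Phi(\{y_2 < 0\}) = G$ and $\Phi(\{y_2 > 0\}) = \operatorname{int}(D)$ again leans on condition (i), since without it $D$ might degenerate to a lower-dimensional set pressed against $T$ and the image of $\{y_2 > 0\}$ could spill into $G$. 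An alternative route is to invoke a planar bi-Lipschitz extension theorem of Tukia type to extend $\gamma$ to a global bi-Lipschitz homeomorphism of $\Sigma$; in either approach the delicate point is obtaining the Lipschitz regularity of the normal field or extension along the potentially nondifferentiable curve $T$.
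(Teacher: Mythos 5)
The paper does not prove this statement --- it is quoted as Theorem~5.4 of \cite{haller2009holder} --- so your argument must be measured against the proof in that reference. Your necessity direction is essentially correct: the case analysis over the three admissible chart images, together with the (tacitly used) fact that for a Lipschitz domain no boundary point has a neighborhood contained in $\overline{\Omega}$, so that $\phi(U\cap\Gamma)$ cannot meet $Q_-$, yields (i) and (ii) as you describe. The reduction of sufficiency to a planar straightening problem in the flattened boundary chart is also the right move and is how the cited proof proceeds. The gap is in the planar straightening itself. Condition (ii) only provides that the transition set $T=\partial_\Sigma G$ is locally a bi-Lipschitz arc, and a bi-Lipschitz arc in the plane need not be a Lipschitz graph in any direction; consequently the tubular-neighborhood map $\Phi(y_1,y_2)=\gamma(y_1)+y_2\,\nu(y_1)$ cannot in general be made bi-Lipschitz, because no continuous (let alone Lipschitz) transverse unit field $\nu$ exists. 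Concretely, identify $\Sigma$ with $\mathbb{C}$ and take $\gamma(t)=t\,e^{i\epsilon\log|t|}$, $\gamma(0)=0$, a double logarithmic spiral; for small $\epsilon$ this is bi-Lipschitz (the lower bound follows from $|\gamma(t)|=|t|$ when $|t|/|s|\notin[\tfrac12,2]$ and from the small relative winding otherwise). Taking $G$ to be one of the two complementary spiral regions and $D$ the closure of the other satisfies (i) and (ii), yet any field $\nu(y_1)$ pointing from $\gamma(y_1)$ into $\operatorname{int}(D)$ must rotate infinitely often as $y_1\to0$ and admits no continuous extension; Rademacher plus mollification cannot repair this.

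What does work --- and what \cite{haller2009holder} actually invokes --- is the route you relegate to an ``alternative'': Tukia's bi-Lipschitz Sch\"onflies/extension theorem, by which a bi-Lipschitz embedding of an interval into the plane extends to a bi-Lipschitz homeomorphism of the plane. (The spiral above is straightened by $z\mapsto z\,e^{-i\epsilon\log|z|}$, which is globally bi-Lipschitz for small $\epsilon$ but is manifestly not of tubular-neighborhood form.) Separately, the identification $\Phi(\{y_2<0\})=G$ and $\Phi(\{y_2>0\})=\operatorname{int}(D)$ is asserted rather than argued. Since every point of $D\setminus T$ lies in $\operatorname{int}(D)$, a small disc $B$ around the transition point decomposes as $B\setminus T=(G\cap B)\sqcup(\operatorname{int}(D)\cap B)$ into two disjoint open sets; after straightening, $T$ separates $B$ into exactly two components, each of which is connected and hence wholly contained in $G$ or in $\operatorname{int}(D)$, and then (i) excludes that both lie in $G$ (else $D$ would locally equal the arc $T$ and have empty interior), while $x\in\overline{\Gamma}$ excludes that both lie in $\operatorname{int}(D)$. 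This short topological argument is precisely where hypothesis (i) enters the sufficiency proof and needs to be written out; as it stands your proposal uses (i) only as a heuristic.
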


\subsection{Setting}\label{section:setting_optimal_control}
In this Section we state the precise framework we use for the optimal control result. We begin by specifying the assumptions on the domain.

\textbf{The Domain.} We consider a finite time interval $I = [0,T]$. The spatial domain $\Omega \subset \mathbb{R}^d$ with $d = 1,2,3$ is assumed to be an open, bounded and connected Lipschitz domain. We consider partitions of the boundary $\partial\Omega$, namely
\begin{gather*}
    \partial\Omega = \Gamma_N^e \cup \partial_D^e, \quad\text{and}\quad \partial\Omega = \Gamma_N^d\cup\Gamma_D^d
\end{gather*}
that will be used for the elastic and the diffusion equation respectively. For the partition of the elastic equation we assume $|\Gamma_D^e| \neq 0$. We require both $\Omega \cup \Gamma_N^e$ and $\Omega \cup \Gamma_N^d$ to be Gr\"oger regular, see Definition~\ref{definition:groger_regular_sets} or \cite{groger1989aw} and \cite{haller2009holder}.

\begin{remark}
    Note the following things.
    \begin{itemize}
        \item [(i)] For the elastic equation we exclude a pure Neumann problem, however, we can include this case by passing to a suitable quotient space. We excluded this for convenience and brevity only.
        \item[(ii)] The assumption of Gr\"oger regularity is very mild and all desirable application settings we have in mind easily satisfy this requirement. Compare to \cite{haller2009holder} for more information.
    \end{itemize}
\end{remark}

\textbf{The Control Space.} The set of control variables is defined to be
\begin{equation}\label{equation:set_of_control_variables}
    P = \left\{ \rho \in H^2(\Omega) \mid 0 < c_P \leq \rho(x) \leq C_P < 1 \right\},
\end{equation}
where $c_P$ and $C_P$ are two fixed constants. Note that in the spatial dimensions $d=1,2,3$ the space $H^2(\Omega)$ embeds into $C^0(\Omega)$, hence the pointwise condition imposed in the above definition is well-defined. 

\textbf{The State Space and the Equations.} Consider the state space
\begin{equation*}
    Y = C^0(I,H^1_{D_e}(\Omega)) \times H^1(I,H^1_D(\Omega),H^1_D(\Omega)^*)\cap L^4(I,C^0(\Omega))\times W^{1,2}_0(I,C^0(\Omega))^2
\end{equation*}
and the space
\begin{equation*}
    W = L^2(I,H^1_{D_e}(\Omega))^* \times L^2(I,H^1_{D_d}(\Omega))^* \times L^2(\Omega) \times L^2(I,C^0(\Omega))^2.
\end{equation*}
Then the state equations can be written in the form $e(y,\rho) = 0$ with the constraint operator
\begin{equation*}
    e:Y\times P \to W, \quad (y,\rho) = (\tilde u,\tilde a_1,\tilde a_2, b, c, \rho)\mapsto e(y,\rho)
\end{equation*}
given by
\begin{align}\label{equation:state_equations_optimal_control_setting}
    e(y,\rho) 
    = 
    \begin{pmatrix}
    \iint \mathbb{C}(\rho,\sigma, b)\varepsilon(\tilde u+u_D):\varepsilon(\cdot)\mathrm dx\mathrm dt - \int_I \int_{\partial\Omega} g_N\cdot \mathrm ds\mathrm dt
    \\
    \\
    \int_I\langle d_t\tilde a_1,\cdot\rangle_{H^1_{D_d}(\Omega)}\mathrm dt + \iint D(\rho)\nabla \tilde a_1\nabla\cdot + k_{3,1}(\tilde a_1+1)\cdot\mathrm dx\mathrm dt - \iint k_{2,1}S(\varepsilon(u_0+u_D)) c\cdot\mathrm dx\mathrm dt
    \\
    \\
    \int_I\langle d_t\tilde a_2,\cdot\rangle_{H^1_{D_d}(\Omega)}\mathrm dt + \iint D(\rho)\nabla \tilde a_2\nabla\cdot + k_{3,2}(\tilde a_2+1)\cdot\mathrm dx\mathrm dt - \iint k_{2,2}S(\varepsilon(u_0+u_D)) c\cdot\mathrm dx\mathrm dt
    \\
    \\
    \tilde a_1(0) + 1
    \\
    \\
    \tilde a_2(0) + 1
    \\
    \\
    d_tc - k_6(\tilde a_1 + 1)(\tilde a_2 + 1)(1+k_7c)\left( 1 - \frac{c}{1 - \rho} \right)
    \\
    \\
    d_tb - k_4(\tilde a_1 + 1)c\left( 1 - \frac{b}{1 - \rho} \right)
    \end{pmatrix}
\end{align}
We frequently use the notation
\begin{equation*}
    a_i = \tilde a_i + 1, \quad \text{and} \quad u = \tilde u + u_D.
\end{equation*}

\textbf{Functional Relationships.} To make fully sense of the above definition of $e$ we still need to clarify the assumptions made on the data and functional relationships. We begin with the function $\sigma$. We assume that it is smooth, depends only on time and is bounded away from zero, i.e., 
\begin{equation}\label{equation:assumption_on_sigma}
    \sigma\in C^\infty(I), \quad\text{with }\sigma(t) > 0 \text{ for all }t\in I.
\end{equation}
Usually, we set $\sigma$ to be an exponential decay. For the material properties $\mathbb{C}$ of the elastic equation we require that it is a map
\begin{equation*}
    \mathbb{C}:\operatorname{dom}(\mathbb{C})\subset C^0(I\times \Omega)\times C^0(\Omega) \to C^0(I,L^\infty(\Omega,\mathcal{L}(\mathcal{M}_s))), \quad \text{with}\quad (b,\rho)\mapsto \mathbb{C}(b,\sigma,\rho).
\end{equation*}
The concrete definition of $\operatorname{dom}(\mathbb{C})$ is not so important, however, as a minimal requirement it should hold
\begin{equation*}
    \bigcup_{\rho\in P}\{ b\in C^0(I\times\Omega) \mid 0 \leq b(t,x) \leq 1 - \rho(x) \}\times\{\rho\} \subset \operatorname{dom}(\mathbb{C}).
\end{equation*}
Furthermore, we need $\mathbb{C}(\cdot,\sigma,\rho)$ to be Lipschitz continuous with Lipschitz constant independent of $\rho$ and  $\mathbb{C}$ is assumed to be continuous on all of $\operatorname{dom}(\mathbb{C})$. Finally, we require
\begin{equation}\label{equation:assumption_on_C_boundedness}
    \sup_{(b,\rho)\in\operatorname{dom}(\mathbb{C})}\lVert \mathbb{C}(b,\sigma,\rho) \rVert_{L^\infty(\Omega,\mathcal{L}(\mathcal{M}_s))} < \infty
\end{equation}
and
\begin{equation}\label{equation:assumption_on_C_ellipticity}
    \inf_{(b,\rho)\in\operatorname{dom}(\mathbb{C})}\left[\inf_{M\in\mathcal{M}_s\setminus \{0\}}\mathbb{C}(b,\sigma,\rho)M:M\right] \geq c_{\mathbb{C}}|M|^2,
\end{equation}
for a constant $c_{\mathbb{C}}>0$. We need a further regularity property of $\mathbb{C}$. We assume that $b(t)\in C^\alpha(\Omega)$, $\rho\in C^\alpha(\Omega)$ for an $\alpha \in (0,1)$ implies that the coefficient functions
\begin{equation*}
    C_{ijkl}(t)\coloneqq \left[ \mathbb{C}(b,\sigma,\rho)(t) \right]_{ijkl}
\end{equation*}
are members of $C^\alpha(\Omega)$ and that there exists a constant $C>0$ not depending on $b$ and $\rho$ such that
\begin{equation}\label{equation:holder_regulariy_of_coefficients}
    \lVert C_{ijkl}(t) \rVert_{C^\alpha(\Omega)} \leq C\lVert b(t) \rVert_{C^\alpha(\Omega)}\lVert \rho \rVert_{C^\alpha(\Omega)}.
\end{equation}
For the boundary data $u_D$ and $g_N$ of the elliptic equation we assume that 
\begin{equation}\label{equation:assumption_on_g_N}
    g_N \in C^0(I,L^2(\partial\Omega))
\end{equation}
and that the Dirichlet boundary data is given through a function 
\begin{equation}\label{equation:assumption_on_u_D}
    u_D \in C^0(I,H^{1+\theta}(\Omega)),
\end{equation}
meaning that the boundary information can be lifted to all of $\Omega$ such that the lift has the above regularity in time and space, where $\theta > 0$ can be arbitrarily small. In practice, this is easy to verify as we mainly work with Dirichlet boundary conditions that do not vary in time. The material properties $D(\rho)$ used in the diffusion equation are a map
\begin{equation*}
    D:\operatorname{dom}(D)\subset C^0(\Omega) \to L^\infty(\Omega,\mathcal{M}_s), \quad\text{with}\quad \rho\mapsto D(\rho)
\end{equation*}
that we require to be continuous with respect to the uniform norm on $\operatorname{dom}(D)$. The domain of $D$ will usually satisfy
\begin{equation*}
    \{ \rho\in C^0(\Omega) \mid 0 < c_P \leq \rho(x) \leq C_P < 1 \} \subset \operatorname{dom}(D),
\end{equation*}
where $c_P$ and $C_P$ are the positive constants appearing in the definition of $P$. We also require $D$ to be uniformly elliptic independently of $\rho\in\operatorname{dom}(D)$, i.e.,
\begin{equation}\label{equation:assumption_on_D_ellipticity}
    \inf_{\rho\in\operatorname{dom}(D)}\left[ \inf_{\xi \in \mathbb{R}^d\setminus \{0\}} D(\rho)\xi\cdot\xi \right] \geq c_D |\xi|^2,
\end{equation}
for a constant $c_D > 0$. Finally, for the function $S(\cdot)$ we assume that it is given through a map on matrices
\begin{equation*}
    S(\cdot):\mathbb{R}^{d\times d} \to [0,\infty)
\end{equation*}
that we require to be Lipschitz and to obey an estimate of the form
\begin{equation}\label{equation:assumption_on_smoothing_of_norm_for_symmetric_gradient}
    S(A) \leq C_1|A| + C_2\quad\text{for all }A\in\mathbb{R}^{d\times d},
\end{equation}
where $C_1, C_2 >0$ and $|A|$ denotes the Euclidean (or any) norm of a matrix. Furthermore, we need $S$ to be continuous, more precisely, we assume that if $(v_k)\subset L^2(\Omega,\mathbb{R}^{d\times d})$ is a sequence, then it holds
\begin{equation}\label{equation:assumption_on_weak_continuity_of_smoothing_of_norm}
    v_k \to v \quad\text{in }L^2(\Omega,\mathbb{R}^{d\times d}) \quad \Rightarrow \quad S(v_k) \to S(v) \quad\text{in }L^2(\Omega).
\end{equation}

We recall the main result of the first chapter concerning the well-posedness of the PDE-ODE system.
\begin{theorem}
    Assume that the setting described in this section holds. Then, for every $\rho \in P$ there exists a unique solution $y=(\tilde u, \tilde a_1, \tilde a_2, b, c)\in Y$ satisfying $e(y,\rho) = 0$, i.e., solving the state equations \eqref{equation:state_equations_optimal_control_setting}.
\end{theorem}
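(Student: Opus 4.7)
The plan is to prove well-posedness by a fixed-point argument that respects the hierarchical structure of the coupled system \eqref{StrongElasticEquation}--\eqref{StrongODE}. With $\rho$ fixed, the nonlinear coupling is mediated only through the bone fraction $b$ (entering $\mathbb{C}$) and the strain-driven source $S(\varepsilon(u))c$ (entering the diffusion equation), so the system decouples into three essentially linear sub-problems that can be iterated. I would define a Picard-type map on a convex, closed subset $K \subset Y$ of tuples respecting the a priori bounds $0 \le b,c \le 1-\rho$ and show it is a contraction on a short time interval, then extend the solution globally.

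\textbf{Elastic sub-problem.} Given a Hölder continuous $b$ with $0 \le b \le 1-\rho$, equation \eqref{StrongElasticEquation} at each time $t\in I$ is a linear elliptic boundary value problem with Hölder continuous coefficients $\mathbb{C}(\rho,\sigma(t),b(t))$ by \eqref{equation:holder_regulariy_of_coefficients} satisfying the uniform ellipticity \eqref{equation:assumption_on_C_ellipticity}. The Gröger regularity of $\Omega\cup\Gamma_N^e$, combined with the lift of $u_D$ from \eqref{equation:assumption_on_u_D} and Korn's inequality, gives a unique $\tilde u(t)\in H^1_{D_e}(\Omega)$; continuous dependence on $t$ through the time-regularity of $\sigma,b,g_N,u_D$ then places $\tilde u \in C^0(I,H^1_{D_e}(\Omega))$. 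Assumption \eqref{equation:assumption_on_smoothing_of_norm_for_symmetric_gradient} ensures $S(\varepsilon(u))\in L^\infty(I,L^2(\Omega))$ so that it can serve as forcing in the next step.

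\textbf{Parabolic and ODE sub-problems.} Given $u$ and $c$, equation \eqref{StrongDiffusionEquation} is a linear parabolic problem on the Gröger regular set $\Omega\cup\Gamma_N^d$ with $L^\infty$, uniformly elliptic coefficient $D(\rho)$. Standard Lions--Magenes variational theory delivers existence and uniqueness in $H^1(I,H^1_{D_d},(H^1_{D_d})^*)$, while maximal parabolic regularity on Gröger regular domains (à la Haller-Dintelmann--Rehberg) upgrades $\tilde a_i$ to $L^4(I,C^0(\Omega))$. Given the resulting $a_1,a_2$, the equations \eqref{StrongCellODE}--\eqref{StrongODE} are ODEs in $t$ with spatial parameter $x$ and right-hand sides locally Lipschitz in $(c,b)$, so Picard--Lindelöf applies pointwise in $x$; the logistic factors $1-c/(1-\rho)$ and $1-b/(1-\rho)$ force invariance of the physical box $[0,1-\rho(x)]$ and yield the $W^{1,2}_0(I,C^0(\Omega))$ regularity.

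\textbf{Fixed-point closure and main obstacle.} Composing these three solves defines a map $T:K\to K$ on a closed subset $K$ encoding the a priori bounds. Using the Lipschitz dependence of $\mathbb{C}(\cdot,\sigma,\rho)$ on $b$, the Lipschitz continuity of $S$, and the smoothness of the ODE right-hand sides, one estimates the difference of two iterates in the $C^0([0,T_0],C^0(\Omega))$ norm for the coupling variables and absorbs the linear-in-$T_0$ contribution to obtain a contraction for $T_0$ small. Banach's theorem gives a unique local solution, and because the invariant set $K$ depends only on $\rho$, a standard continuation argument extends it to the whole interval $I$. The main technical obstacle is the $L^4(I,C^0(\Omega))$ regularity of $\tilde a_i$: this is the minimal regularity needed to make the product $a_1 a_2$ and the coupling $a_1 c$ in the ODEs act on $C^0(\Omega)$ for the fixed-point map to be well-defined, and it is precisely here that the Gröger regularity hypothesis on $\Omega\cup\Gamma_N^d$ enters in an essential way, rather than as a convenience.
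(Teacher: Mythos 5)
Your overall architecture --- decoupling into elastic, parabolic and ODE sub-problems, a Picard map on a set encoding the invariant bounds $0\le b,c\le 1-\rho$, contraction on a short interval, continuation --- is the same route the paper relies on, except that the paper outsources essentially all of it to Theorem 3.2 of \cite{dondl2021efficient} and only supplies one new ingredient: the improved integrability $\tilde a_i\in L^4(I,C^0(\Omega))$, obtained by combining the bound $k_{2,i}S(\varepsilon(u))c\in L^\infty(I,L^2(\Omega))$ with the maximal regularity result of Lemma~\ref{lemma:the_bound_for_a}.

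The one place where your sketch is too optimistic is precisely that ingredient. You present the upgrade of $\tilde a_i$ to $L^4(I,C^0(\Omega))$ as a routine application of maximal parabolic regularity on Gr\"oger regular sets ``\`a la Haller-Dintelmann--Rehberg''. It is not: after the shift $\tilde a_i=a_i-1$ the initial datum is $\tilde a_i(0)=-1$, which does not vanish on $\Gamma_D^d$ and hence does not lie in the trace space $H^1_{D_d}(\Omega)$ required by the abstract theory (combining \cite{amann1995linear} with \cite{haller2009holder} only covers compatible initial data, as the paper's remark following Lemma~\ref{lemma:the_bound_for_a} points out). This incompatibility between \eqref{InitialMolecules} and \eqref{BoundaryDirichletMolecules} --- the ``jump initial condition'' --- is exactly why the paper isolates Lemma~\ref{lemma:the_bound_for_a}, whose proof (superposing a homogeneous-source problem with merely $L^\infty$ initial datum and an inhomogeneous problem with zero initial datum) is deferred to \cite{dondl2021regularity}. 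Invoked as a black-box citation of standard results, your argument fails at this step; you need either to reproduce that splitting or to cite it explicitly. The remainder of your sketch (Lax--Milgram with Korn for the elastic step, Lions for the variational parabolic solution, Carath\'eodory/Picard--Lindel\"of in $C^0(\Omega)$ for the ODEs, contraction and continuation) is sound, and the contraction estimates themselves are unaffected by the incompatibility, since the difference equations carry zero initial data.
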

\begin{proof}
    This follows almost as an application of Theorem 3.2 in \cite{dondl2021efficient}. Note that our assumptions here are slightly stronger, so the requirements in \cite{dondl2021efficient} are trivially satisfied. The only extension to the results in \cite{dondl2021efficient} is to show the improved integrability for the functions $a_1$ and $a_2 \in L^4(I,C^0(\Omega))$. To this end, we use the estimate $(3.11)$ in \cite{dondl2021efficient} which shows that the right-hand sides of the diffusion equations satisfy
    \begin{equation*}
        k_{2,i}S(\varepsilon(u_0+u_D)) c \in L^\infty(I,L^2(\Omega)).
    \end{equation*}
    This allows to apply the maximal $L^p$ regularity result in Lemma~\ref{lemma:the_bound_for_a} and obtain $\tilde a_i \in L^p(I,C^0(\Omega))$ for all $p\in [2,\infty)$.
\end{proof}

\subsection{Objective Function}\label{section:objective_function}
Here we formulate the class of objective functions we are able to treat in the setting of the optimal control result. For every time-point $t\in I$ and state control pair $(y,\rho) \in Y \times P$ we consider the elastic energy
\begin{equation}
    \mathcal{E}:Y\times P \to C^0(I), \quad \mathcal{E}(y,\rho)(t) = t\mapsto \frac12 \int_\Omega \mathbb{C}(b(t),\sigma(t),\rho)\varepsilon(u(t)):\varepsilon(u(t))\mathrm dx.
\end{equation}
For most of our objective functions we desire $\mathcal{E}$ to take values in $C^0(I)$, as we want to have access to point evaluations. This is the reason to require the continuity of the solutions to the elastic equation in the definition of $Y$. Primarily, we are interested in the reduced elastic energy $\hat{\mathcal{E}}$, that is, we are interested in $\mathcal{E}(y,\rho)$ only when $(y,\rho)$ solves the system of equations, i.e., when it holds $e(y,\rho) = 0$. We define
\begin{equation}
    \hat{\mathcal{E}}: P \to C^0(I), \quad \hat{\mathcal{E}}(\rho) = \mathcal{E}(y_\rho,\rho) 
\end{equation}
and here it holds $e(y_\rho,\rho) = 0$. We provide now the proof that $\mathcal{E}$ takes values in $C^0(I)$.
\begin{lemma}\label{lemma:strict_positivity_of_elastic_energy}   
    For all $(y,\rho)\in Y\times P$ we have $\mathcal{E}(y,\rho)\in C^0(I)$. If it holds $e(y,\rho) = 0$ and $\tilde u(t) +u_D(t) \neq 0$, then $\mathcal{E}(y,\rho)(t) > 0$. 
\end{lemma}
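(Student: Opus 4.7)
The statement splits into a continuity claim for $\mathcal{E}(y,\rho)\in C^0(I)$ and a strict positivity claim when $(y,\rho)$ solves the state equations; I would prove them in turn.

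For continuity, fix $t_0\in I$ and take $t_n\to t_0$. From $\tilde u\in C^0(I,H^1_{D_e}(\Omega))$ (built into the definition of $Y$) and $u_D\in C^0(I,H^{1+\theta}(\Omega))$ by \eqref{equation:assumption_on_u_D}, we get $u=\tilde u+u_D\in C^0(I,H^1(\Omega))$, so $\varepsilon(u(t_n))\to\varepsilon(u(t_0))$ strongly in $L^2(\Omega,\mathcal{M}_s)$. Since $\mathbb{C}(b,\sigma,\rho)$ belongs to $C^0(I,L^\infty(\Omega,\mathcal{L}(\mathcal{M}_s)))$ by construction and is uniformly bounded in $L^\infty$ by \eqref{equation:assumption_on_C_boundedness}, we also have $\mathbb{C}(b,\sigma,\rho)(t_n)\to\mathbb{C}(b,\sigma,\rho)(t_0)$ in $L^\infty(\Omega)$. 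A standard product estimate (one $L^\infty$ factor times two $L^2$ factors) then shows that the integrand converges in $L^1(\Omega)$, giving $\mathcal{E}(y,\rho)(t_n)\to\mathcal{E}(y,\rho)(t_0)$ and hence $\mathcal{E}(y,\rho)\in C^0(I)$.

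For positivity, the uniform ellipticity \eqref{equation:assumption_on_C_ellipticity} immediately yields
\[
    \mathcal{E}(y,\rho)(t)\geq \tfrac{c_{\mathbb{C}}}{2}\,\|\varepsilon(u(t))\|_{L^2(\Omega)}^2,
\]
so it suffices to show $\varepsilon(u(t))\not\equiv 0$ whenever $e(y,\rho)=0$ and $u(t)=\tilde u(t)+u_D(t)\neq 0$. I would argue by contradiction: if $\varepsilon(u(t))=0$, then $u(t)$ is an infinitesimal rigid motion, and the constraint $\tilde u(t)\in H^1_{D_e}(\Omega)$ forces this rigid motion to have the same trace as $u_D(t)$ on $\Gamma_D^e$. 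Combining $|\Gamma_D^e|\neq 0$ with Korn's second inequality on $H^1_{D_e}(\Omega)$ — available by $|\Gamma_D^e|\neq 0$ and the Gr\"oger regularity of $\Omega\cup\Gamma_N^e$ — together with the classical rigidity fact that a rigid motion on a connected Lipschitz domain is uniquely determined by its restriction to a boundary set of positive $(d-1)$-measure, this rigid motion must reduce to the zero displacement, contradicting $u(t)\neq 0$.

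The main obstacle will be that last step: Korn's inequality naturally lives on the subspace $H^1_{D_e}(\Omega)$ rather than on the affine set containing $u(t)$. The cleanest way around this, which I would follow, is to pass to the difference $u(t)-r_0$ with $r_0$ an auxiliary rigid motion matching $u_D(t)$ on $\Gamma_D^e$ if one exists (in the generic setting no such $r_0$ other than zero is consistent with the prescribed Dirichlet trace, and then Korn applies directly to $\tilde u(t)$); once reduced to a homogeneous problem on $H^1_{D_e}(\Omega)$, the triviality of the kernel of $\varepsilon$ there closes the contradiction and delivers strict positivity of the elastic energy.
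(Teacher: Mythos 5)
Your proof is correct and follows essentially the same two-step strategy as the paper: continuity of $t\mapsto\mathcal{E}(y,\rho)(t)$ from $u=\tilde u+u_D\in C^0(I,H^1(\Omega))$ together with $\mathbb{C}(b,\sigma,\rho)\in C^0(I,L^\infty(\Omega,\mathcal{L}(\mathcal{M}_s)))$ and a product estimate, and positivity from uniform ellipticity plus a Korn-type argument. The one place where the two arguments genuinely diverge is the positivity step. The paper compresses it into the single estimate $\mathcal{E}(y,\rho)(t)\geq c\,\lVert\tilde u(t)+u_D(t)\rVert^2_{H^1(\Omega)}$, i.e., it applies Korn's second inequality directly to $u(t)=\tilde u(t)+u_D(t)$ even though $u(t)$ does not lie in $H^1_{D_e}(\Omega)$; you instead reduce to showing $\varepsilon(u(t))\neq 0$ and unpack the underlying rigidity argument by hand. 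Your version makes explicit precisely the point that the paper's one-line estimate glosses over: both arguments tacitly require that $u_D(t)|_{\Gamma_D^e}$ is not the trace of a nonzero infinitesimal rigid motion (take $g_N=0$ and $u_D$ a nonzero constant vector to see that otherwise one can have $u(t)\neq 0$ with $\varepsilon(u(t))=0$ and hence $\mathcal{E}(y,\rho)(t)=0$). You flag this as the ``main obstacle'' and resolve it only ``in the generic setting''; the paper does not flag it at all. So neither argument is fully airtight for arbitrary admissible $u_D$, but yours correctly isolates the only delicate point, and in the intended applications (Dirichlet loading that is not a rigid motion) both proofs close in the same way.
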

\begin{proof}
    As $\tilde u+u_D \in C^0(I, H^1(\Omega))$ by the definition of the state space $Y$ and the material tensor $\mathbb{C}$ is a member of the space $C^0(I,L^\infty(\Omega,\mathcal{L}(\mathcal{M}_s)))$ it follows that 
    \begin{equation*}
        \mathbb{C}(\sigma, \rho, b)\varepsilon(\tilde u+u_D):\varepsilon(\tilde u+u_D) \in C^0(I,L^1(\Omega)).
    \end{equation*}
    Using the $L^1(I)$ continuity of integration, we get $\mathcal{E}(y,\rho)\in C^0(I)$. Now, let $e(y,\rho) = 0$. We can estimate
    \begin{equation*}
        \mathcal{E}(y,\rho)(t) \geq c \lVert \tilde u(t) + u_D \rVert^2_{H^1(D)},
    \end{equation*}
    with the constant $c > 0$ depending on the constant appearing in Korn's inequality and the ellipticity constant $c_{\mathbb{C}}$. As it holds $e(y,\rho) = 0$, for every $t\in I$ the function $\tilde u(t) + u_D(t)$ solves an elastic equation, hence can only vanish if the boundary conditions are homogeneous for this time-point which leads to $\tilde u(t) + u_D(t) = 0$. This is excluded in the statement of the Lemma and the proof is complete.
\end{proof}
We state now the structural assumption we impose for our admissible objective functions.
\begin{assumption}\label{assumption:objective_function_optimal_control}
    Let $\mathcal{F}:\operatorname{dom}(\mathcal{F})\subset C^0(I) \to \mathbb{R}$ be a continuous map and assume that the domain of $\mathcal{F}$ satisfies
    \begin{equation}\label{equation:domain_of_F}
        \left\{ v \in C^0(I) \mid v(t) > 0 \text{ for all }t\in I \right\} \subset \operatorname{dom}(\mathcal{F}).
    \end{equation}
    Furthermore, let $\mathcal{G}:C^0(I\times \Omega) \to \mathbb{R}$ be a continuous function. Using the elastic energy $\mathcal{E}$ and functionals $\mathcal{F}$, $\mathcal{G}$ as above, we define the prototypical objective function as
    \begin{equation*}
        J: Y \times P \to \mathbb{R}, \quad J(y,\rho) = \mathcal{F}\left( \mathcal{E}(y,\rho) \right) + \mathcal{G}(b)
    \end{equation*}
    in case the domain of $\mathcal{F}$ allows $\mathcal{E}(y,\rho)$ as an argument. The function $b$ denotes the bone component of the state variable $y$. More important, we define the reduced objective
    \begin{equation*}
        \hat J : P \to \mathbb{R}, \quad \hat J(\rho) = \mathcal{F}\left( \mathcal{E}(\phi(\rho),\rho) \right) + \mathcal{G}(b). 
    \end{equation*}
    Note that the assumption \eqref{equation:domain_of_F} together with Lemma \ref{lemma:strict_positivity_of_elastic_energy} guarantees that $\mathcal{E}(\phi(\rho),\rho)$ is an admissible argument of $\mathcal{F}$. Finally, we assume that $\hat J$ is bounded from below if we are interested in a minimization problem and we assume $\hat J$ to be bounded from above if we are interested in maximization.
\end{assumption}
\begin{remark} We discuss how the examples discussed in Section~\ref{section:the_optimization_problem} fall in the abstract setting described above.
    \begin{enumerate}
        \item [(i)] Choosing the minimum (or maximum) functional
        \begin{equation*}
            \min: C^0(I) \to \mathbb{R}, \quad v\mapsto \min_{t\in I}v(t)
        \end{equation*}
        for $\mathcal{F}$ is conforming with Assumption~\ref{assumption:objective_function_optimal_control} as clearly $\operatorname{min}$ and $\operatorname{max}$ are continuous functionals on $C^0(I)$.
        \item[(ii)] Smooth approximations of the minimum and the maximum are given by $L^p(I)$ norms with large values of $|p|$. A positive value for $p$ serves as an approximation of the maximum and a negative value is suitable for the approximation of the minimum. In the latter case, i.e., $p<0$, one chooses 
        \begin{equation*}
            \operatorname{dom}\left( \lVert\cdot\rVert_{L^p(I)} \right) \coloneqq \left\{ v \in C^0(I) \mid v(t) > 0 \text{ for all }t\in I \right\}.
        \end{equation*}
        It is straight forward to show that $\lVert \cdot \rVert_{L^p(I)}$ is continuous with respect to the uniform norm, also for negative exponents. In fact, it is even Fr\'echet differentiable.
        \item[(iii)] The choice
        \begin{equation*}
            \mathcal{G}(b) = \int_\Omega b(T) \mathrm dx,
        \end{equation*}
        corresponds to the objective of regenerated bone at time $T$. Clearly, $\mathcal{G}$ is continuous and evaluating $\mathcal{G}$ only at functions $b$ that solve the state equations shows that $\mathcal{G}$ is bounded.
    \end{enumerate}
\end{remark}

\section{Main Results}\label{section:main_results}
Our main result establishes the existence of an optimal control in the set $P\subset H^2(\Omega)$ given the objective function $\hat J$ is regularized by an $H^2(\Omega)$ norm.
\begin{theorem}[Optimal Control]\label{theorem:optimal_control_approx_objective}
    Assume we are in Setting \ref{section:setting_optimal_control} and let $\eta > 0$ be fixed. Then there exists a minimizer $\rho^* = \rho^*(\eta) \in P$ to the regularized objective 
    \begin{equation*}
        \hat{J}(\rho^*) + \eta\lVert \rho^* \rVert_{H^2(\Omega)} = \inf_{\rho \in P}\left[ \hat{J}(\rho) + \eta\lVert  \rho \rVert^2_{H^2(\Omega)} \right].
    \end{equation*}
\end{theorem}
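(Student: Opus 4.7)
The strategy is a direct method of the calculus of variations applied to the regularized functional. Let me outline the steps.

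\textbf{Step 1: Bounded minimizing sequence.} Because $\hat{J}$ is bounded from below on $P$ by assumption (Assumption \ref{assumption:objective_function_optimal_control}), the regularized functional is bounded below as well. Take a minimizing sequence $(\rho_n)_{n\in\mathbb{N}}\subset P$. From
\begin{equation*}
    \hat{J}(\rho_n) + \eta\lVert \rho_n \rVert^2_{H^2(\Omega)} \leq \inf_{\rho\in P}\bigl[\hat{J}(\rho) + \eta\lVert\rho\rVert^2_{H^2(\Omega)}\bigr] + 1
\end{equation*}
for $n$ large, together with the lower bound on $\hat{J}$, I immediately get a uniform $H^2$-bound on the $\rho_n$.

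\textbf{Step 2: Extract a candidate minimizer.} By reflexivity of $H^2(\Omega)$, a subsequence (not relabeled) converges weakly: $\rho_n \rightharpoonup \rho^*$ in $H^2(\Omega)$. Since $d\leq 3$ the embedding $H^2(\Omega)\hookrightarrow C^0(\overline{\Omega})$ is compact, so $\rho_n \to \rho^*$ uniformly. The pointwise bounds $c_P \leq \rho_n \leq C_P$ pass to the uniform limit, and weak closedness of $H^2$ gives $\rho^*\in H^2(\Omega)$, hence $\rho^* \in P$.

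\textbf{Step 3: Continuity of the reduced objective.} This is the main technical step. I need to show that $\rho_n \to \rho^*$ in $C^0(\overline{\Omega})$ implies $\hat{J}(\rho_n) \to \hat{J}(\rho^*)$. The reduced objective decomposes as $\hat{J}(\rho) = \mathcal{F}(\mathcal{E}(\phi(\rho),\rho)) + \mathcal{G}(b_\rho)$, where $\phi$ is the solution operator from Theorem on well-posedness. Continuity reduces to showing that the solution operator $\phi: P\to Y$ is continuous when $P$ carries the $C^0$-topology, and then composing with the continuous maps $\mathcal{E}$, $\mathcal{F}$, $\mathcal{G}$. Passing to the limit in the state equations is the heart of the matter: since $\rho_n\to \rho^*$ uniformly and each of $\mathbb{C}(\cdot,\sigma,\cdot)$, $D(\cdot)$ is assumed continuous with respect to the uniform topology (with $\mathbb{C}$ additionally Lipschitz in $b$ uniformly in $\rho$), one may extract a further subsequence so that the corresponding states $y_n = \phi(\rho_n)$ satisfy a priori estimates (using \eqref{equation:assumption_on_C_boundedness}, \eqref{equation:assumption_on_C_ellipticity}, \eqref{equation:assumption_on_D_ellipticity} and the structural bounds $0\leq c_n, b_n \leq 1-c_P$) uniform in $n$. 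These yield weak and weak-$\ast$ convergence of the components of $y_n$ in the relevant Bochner spaces, together with strong compactness via Aubin--Lions for the parabolic components and standard ODE stability for $b_n, c_n$. The compactness, together with the continuity hypothesis \eqref{equation:assumption_on_weak_continuity_of_smoothing_of_norm} on $S$, allows me to pass to the limit in every term of $e(y_n,\rho_n)=0$ and identify the limit as $\phi(\rho^*)$. Uniqueness of the limit (from Theorem on well-posedness applied to $\rho^*$) then shows the full sequence converges, so $\phi(\rho_n)\to \phi(\rho^*)$ in the appropriate topologies and $\hat{J}(\rho_n)\to \hat{J}(\rho^*)$ by continuity of $\mathcal{F}$, $\mathcal{G}$ and of $(y,\rho)\mapsto \mathcal{E}(y,\rho)\in C^0(I)$ (using \eqref{equation:assumption_on_C_boundedness} again).

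\textbf{Step 4: Conclude.} The $H^2(\Omega)$-norm is weakly lower semicontinuous, so $\lVert \rho^* \rVert_{H^2(\Omega)}^2 \leq \liminf_n \lVert \rho_n \rVert_{H^2(\Omega)}^2$. Combining with the continuity of $\hat{J}$ from Step 3,
\begin{equation*}
    \hat{J}(\rho^*) + \eta \lVert \rho^* \rVert_{H^2(\Omega)}^2 \leq \liminf_{n\to\infty}\bigl[\hat{J}(\rho_n) + \eta \lVert \rho_n \rVert_{H^2(\Omega)}^2\bigr] = \inf_{\rho\in P}\bigl[\hat{J}(\rho) + \eta\lVert \rho\rVert^2_{H^2(\Omega)}\bigr],
\end{equation*}
and since $\rho^*\in P$ the reverse inequality is trivial. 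The main obstacle is Step 3: establishing the compactness and limit passage in the fully coupled PDE--ODE system under only uniform convergence of the controls, which depends crucially on the regularity gain $H^2\hookrightarrow C^0$ in $d\leq 3$ and on the hypothesized continuity of the functional relationships.
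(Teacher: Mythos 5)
Your overall skeleton (direct method, $H^2$ bound from the Tikhonov term, weak $H^2$ plus compact embedding into $C^0$, weak lower semicontinuity of the norm, continuity of $\hat J$ along the sequence) is exactly the paper's strategy, and Steps 1, 2 and 4 are correct as written. The gap is in Step 3, and it is not merely a matter of omitted detail: the tools you name there do not suffice.

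The objective contains $\mathcal{F}(\mathcal{E}(\phi(\rho),\rho))$ with $\mathcal{E}$ quadratic in $\varepsilon(u)$ and $\mathcal{F}$ continuous only on $C^0(I)$, so you need \emph{strong} convergence $u_n \to u^*$ in $C^0(I,H^1(\Omega))$; weak convergence of the states, Aubin--Lions compactness for the parabolic components, and ``standard ODE stability'' do not deliver this. The paper obtains it through a chain that your proposal skips entirely: first a maximal parabolic regularity result giving $a^i_n$ bounded in $L^p(I,C^\alpha(\Omega))$ --- a genuinely non-standard statement because of the mixed boundary conditions, merely bounded measurable coefficients and the jump initial condition (it is Lemma~\ref{lemma:the_bound_for_a}, whose proof is outsourced to a companion paper; Aubin--Lions only gives $L^2(I,L^2(\Omega))$ compactness for the $a^i_n$, which suffices for the ODE limit passage but not here). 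These spatial H\"older bounds propagate through the ODEs to give $W^{1,2}(I,C^\alpha(\Omega))$ bounds on $c_n$ and $b_n$ (Lemmas~\ref{lemma:holder_bound_c_k}--\ref{lemma:bound_for_b_k}), hence $C^\alpha$ coefficients for $\mathbb{C}(\rho_n,\sigma,b_n)$ via \eqref{equation:holder_regulariy_of_coefficients}, hence $H^{1+\theta}(\Omega)$ regularity of $u_n(t)$ uniformly in $t$ and $n$ by the elliptic result of Lemma~\ref{lemma:higher_regularity_elliptic_equation}; Rellich compactness of $H^{1+\theta}\hookrightarrow H^1$ combined with the equi-continuity of $t\mapsto u_n(t)$ (Lemma~\ref{lemma:equi_continuity_of_uk}) and the vector-valued Arzel\`a--Ascoli theorem then yields (A2). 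Without this regularity chain your claim that $\hat J(\rho_n)\to\hat J(\rho^*)$ is unsupported. (A minor additional point: your appeal to uniqueness to upgrade to full-sequence convergence of the states is unnecessary for the direct method --- subsequence convergence suffices.)
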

\begin{proof}
    The proof is established in the course of the article.
\end{proof}

In order to incorporate the pointwise constraint encoded in the definition of the control space $P$, see \eqref{equation:set_of_control_variables}, in a numerical simulation one can use a soft penalization. This usually corresponds to a continuous functional $\mathcal{K}:C^0(\Omega)\to[0,\infty)$. Also in this setting we can establish the existence of an optimal control.
\begin{corollary}\label{corollary:optimal_control_numerical_penalization}
    Assume we are in Setting \ref{section:setting_optimal_control} and let $\mathcal{K}:C^0(\Omega)\to[0,\infty)$ be a continuous, non-negative functional. Then there exists an optimal control $\rho^\dag = \rho^\dag(\eta,\mathcal{K}) \in H^2(\Omega)$ to the regularized and penalized objective, i.e., 
    \begin{equation*}
        \hat J(\rho^\dag) + \eta \lVert \rho^\dag \rVert^2_{H^2(\Omega)} + \mathcal{K}(\rho^\dag) = \inf_{\rho\in H^2(\Omega)}\left[ \hat J(\rho) + \eta \lVert \rho \rVert^2_{H^2(\Omega)} + \mathcal{K}(\rho) \right].
    \end{equation*}
\end{corollary}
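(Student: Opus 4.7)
The plan is to use the direct method of the calculus of variations, running essentially the same argument as for Theorem~\ref{theorem:optimal_control_approx_objective} with only minor modifications to accommodate the soft penalty $\mathcal{K}$ in place of the hard constraint $\rho \in P$.

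First I would observe that the infimum in the statement is finite. Indeed, for any fixed admissible $\rho_0 \in P$ (for example a constant in $[c_P, C_P]$), the triple $\hat J(\rho_0) + \eta \lVert \rho_0 \rVert^2_{H^2(\Omega)} + \mathcal{K}(\rho_0)$ is finite and provides an upper bound. Since $\hat J$ requires the state system to have a solution and hence is only defined for $\rho \in P$, I extend it by $+\infty$ outside $P$. With this convention, any minimizing sequence $(\rho_n) \subset H^2(\Omega)$ must eventually satisfy $\rho_n \in P$, so from now on I may assume $\rho_n \in P$ for all $n$.

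Next I extract a limit. By Assumption~\ref{assumption:objective_function_optimal_control} we have $\hat J$ bounded below on $P$, and $\mathcal{K} \geq 0$, so the Tikhonov term forces $\lVert \rho_n \rVert_{H^2(\Omega)}$ to stay bounded. By reflexivity of $H^2(\Omega)$ and the compact embedding $H^2(\Omega) \hookrightarrow \hookrightarrow C^0(\overline{\Omega})$ (which holds since $d \leq 3$), a subsequence satisfies $\rho_n \rightharpoonup \rho^\dag$ in $H^2(\Omega)$ and $\rho_n \to \rho^\dag$ in $C^0(\overline{\Omega})$. The uniform pointwise bounds $c_P \leq \rho_n(x) \leq C_P$ pass to the uniform limit, giving $\rho^\dag \in P$ (in particular $\hat J(\rho^\dag)$ is finite and the state system is well-posed at $\rho^\dag$).

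Finally I take limits in the three summands. Weak lower semicontinuity of the $H^2$-norm yields $\lVert \rho^\dag \rVert^2_{H^2(\Omega)} \leq \liminf_n \lVert \rho_n \rVert^2_{H^2(\Omega)}$; continuity of $\mathcal{K}$ on $C^0(\overline{\Omega})$ together with the uniform convergence $\rho_n \to \rho^\dag$ gives $\mathcal{K}(\rho_n) \to \mathcal{K}(\rho^\dag)$; and the sequential lower semicontinuity $\hat J(\rho^\dag) \leq \liminf_n \hat J(\rho_n)$ along sequences converging weakly in $H^2$ and strongly in $C^0$ is the content of the proof of Theorem~\ref{theorem:optimal_control_approx_objective} and can be imported verbatim. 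Adding the three inequalities identifies $\rho^\dag$ as a minimizer.

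The main obstacle is really not in the corollary itself but in this last step, namely the lower semicontinuity of $\hat J$, which rests on showing that the solution operator $\phi$ and hence the elastic energy $\mathcal{E}(\phi(\rho),\rho)$ behaves continuously enough along such sequences; this work has already been done for Theorem~\ref{theorem:optimal_control_approx_objective}. The only genuinely new observation here is that the continuity of $\mathcal{K}$ on $C^0(\overline{\Omega})$ combined with the compact embedding $H^2 \hookrightarrow \hookrightarrow C^0$ makes the soft penalty automatically continuous along our minimizing sequence, so no coercivity or convexity assumption on $\mathcal{K}$ is needed beyond non-negativity.
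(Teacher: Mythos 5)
Your proposal is correct and follows essentially the same route as the paper's (very terse) proof: the direct method, treating $\mathcal{K}$ as a compact perturbation via the embedding $H^2(\Omega)\hookrightarrow\hookrightarrow C^0(\overline{\Omega})$ and using its non-negativity to preserve coercivity, with the lower semicontinuity of $\hat J$ imported from the proof of Theorem~\ref{theorem:optimal_control_approx_objective}. Your explicit convention of extending $\hat J$ by $+\infty$ outside $P$ is a reasonable way to make sense of the infimum over all of $H^2(\Omega)$ when the solution operator is only defined on $P$, a point the paper leaves implicit.
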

\begin{proof}
    The proof is established in the course of the article.
\end{proof}
\begin{remark}
    A few comments regarding the above results are in order.
    \begin{enumerate}
        \item[(i)] For some objectives we might be interested in a maximizer rather than a minimizer. In this case, one subtracts the regularizer $\eta \lVert \cdot \rVert_{H^2(\Omega)}$ and the soft penalty $\mathcal{K}$ and the results are still valid. For brevity, we discuss only minimization problems in the remainder.
        \item [(ii)] As discussed in Section \ref{section:objective_function}, we have some freedom in the choice of $\hat J$. From a modelling perspective a maximum or minimum over all time-points of the elastic energy seems reasonable. On the other hand, for the numerical treatment a smooth approximation thereof is preferable, e.g., an $L^p(I)$ norm. Note that all these choices are covered by our main result.
        \item[(iii)] The Tikhonov penalization term $\eta\lVert\cdot\rVert^2_{H^2(\Omega)}$ is artificial. It serves to generate compactness of minimizing sequences and an optimal control result without this term seems out of reach.
        \item[(iv)] It is presently unclear to us if the optimal control problem possesses a unique solution.
    \end{enumerate}
\end{remark}

The strategy to prove Theorem~\ref{theorem:optimal_control_approx_objective} and Corollary~\ref{corollary:optimal_control_numerical_penalization} is the direct method of the calculus of variations and crucially relies on rather specific regularity properties of the diffusion equations and the elastic equation that imply convenient compact embeddings. The technical results concerning these regularity properties are established in Appendix \ref{section:proofs_of_the_main_results}. In this Section, we assume the implications of the compact embeddings and show how this leads to a proof of Theorem~\ref{theorem:optimal_control_approx_objective}. We stress that the mixed boundary conditions, rough coefficients and jump initial conditions are responsible for the technical difficulties.

\begin{proposition}\label{proposition:proof_under_assumptions} Assume we are in Setting \ref{section:setting_optimal_control}. Let $(\rho_k)\subset P$ be a minimizing sequence for $\hat J + \eta\lVert\cdot\rVert^2_{H^2(\Omega)}$ and denote by $(u_k) \subset C^0(I,H^1(\Omega)) $, $(a^1_k),(a^2_k) \subset H^1(I,H^1(\Omega),H^1_D(\Omega)^*)$ and $(b_k), (c_k) \subset W^{1,2}(I,C^0(\Omega)) $ the corresponding solutions to the system \ref{equation:state_equations_optimal_control_setting}. Assume that there is a common subsequence (not relabeled) of  $(\rho_k), (u_k), (a^1_k), (a^2_k), (b_k), (c_k)$ and elements $\rho^* \in P$, $u^* \in C^0(I,H^1(\Omega))$, $a_1^*$, $a_2^* \in H^1(I,H^1(\Omega),H^1_D(\Omega)^*)$ and $b^*, c^* \in W^{1,2}(I, C^0(\Omega))$ such that 
\begin{itemize}
    \item [(A1)] $\rho_k \to \rho^*$ in $C^0(\Omega)$ and $\rho_k \rightharpoonup \rho^*$ in $H^2(\Omega)$ \label{item:A1},
    \item[(A2)] $u_k \to u^*$ in $C^0(I, H^1(\Omega))$,
    \item[(A3)] $a^i_k \rightharpoonup a_i^*$ in $H^1(I,H^1(\Omega), H^1_D(\Omega)^*)$, \quad $i=1,2$
    \item[(A4)] $b_k \to b^*$ in $C^0(I\times\Omega)$
    \item[(A5)] $c_k \to c^*$ in $C^0(I\times\Omega)$
\end{itemize}
then $(\rho^*, u^*, a_1^*, a_2^*, b^*)$ solves the system \ref{equation:state_equations_optimal_control_setting} and $\rho^*$ is minimizer of $\hat J + \eta\lVert\cdot\rVert^2_{H^2(\Omega)}$ over the set $P$, i.e., satisfies
\begin{equation*}
    \hat{J}(\rho^*) + \eta\lVert \rho^* \rVert_{H^2(\Omega)} = \inf_{\rho \in P}\left[ \hat{J}(\rho) + \eta\lVert  \rho \rVert^2_{H^2(\Omega)} \right].
\end{equation*}
\end{proposition}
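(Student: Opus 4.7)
This is a direct-method argument with three moves: (i) check $\rho^* \in P$, (ii) pass to the limit in $e(y_k, \rho_k) = 0$ to obtain $e(y^*, \rho^*) = 0$, so that $y^* = \phi(\rho^*)$, and (iii) verify that $\hat J + \eta\lVert\cdot\rVert^2_{H^2(\Omega)}$ is sequentially lower semicontinuous along the minimizing sequence. Point (i) is immediate: the pointwise bounds $c_P \leq \rho \leq C_P$ define a convex, $C^0(\Omega)$-closed subset which via $H^2(\Omega)\hookrightarrow C^0(\Omega)$ (in $d\leq 3$) is also $H^2$-closed and hence weakly closed, so (A1) gives $\rho^* \in P$.

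\textbf{Limit in the constraint.} I would handle the rows of $e$ one by one against arbitrary fixed test functions. For the elastic row, continuity of $\mathbb{C}$ together with (A1), (A4) yields $\mathbb{C}(\rho_k, \sigma, b_k) \to \mathbb{C}(\rho^*, \sigma, b^*)$ in $C^0(I, L^\infty(\Omega, \mathcal{L}(\mathcal{M}_s)))$; combined with $\varepsilon(u_k) \to \varepsilon(u^*)$ in $C^0(I, L^2)$ from (A2), the bilinear form passes, while the Neumann datum is $k$-independent. For the diffusion rows, the time-derivative pairing passes by linearity under (A3); the elliptic pairing $\iint D(\rho_k)\nabla \tilde a_k^i \cdot \nabla v$ splits as $\iint D(\rho^*)\nabla \tilde a_k^i \cdot \nabla v$, which passes by (A3), plus an error controlled by $\lVert D(\rho_k) - D(\rho^*)\rVert_{L^\infty}$ times the uniform $L^2$ bound on $\nabla \tilde a_k^i$, which vanishes by continuity of $D$ and (A1); the source $S(\varepsilon(u_k))c_k$ converges in $L^2(I\times\Omega)$ via \eqref{equation:assumption_on_weak_continuity_of_smoothing_of_norm} applied to (A2) and the uniform convergence (A5); and the initial rows $\tilde a_k^i(0)+1 = 0$ survive because the trace at $t=0$ is a continuous linear (hence weak-to-weak) map on $H^1(I, H^1_D, H^1_D^*)$. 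For the ODE rows the a priori bound $\rho_k \leq C_P < 1$ built into $P$ keeps $1-\rho_k$ uniformly bounded below and uniformly convergent to $1-\rho^*$ via (A1); together with (A4), (A5) all nonlinear right-hand sides converge uniformly, and the $W^{1,2}(I, C^0(\Omega))$ regularity of $b_k, c_k$ identifies their weak time-derivatives with $d_t b^*, d_t c^*$. Hence $e(y^*, \rho^*) = 0$.

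\textbf{Lower semicontinuity and conclusion.} The same reasoning used for the elastic row, applied pointwise in $t$, yields $\mathcal{E}(y_k, \rho_k) \to \mathcal{E}(y^*, \rho^*)$ in $C^0(I)$. By Lemma~\ref{lemma:strict_positivity_of_elastic_energy} the limit is strictly positive, hence lies in $\operatorname{dom}(\mathcal{F})$, so continuity of $\mathcal{F}$ and $\mathcal{G}$ together with (A4) give $\hat J(\rho_k) \to \hat J(\rho^*)$. Weak lower semicontinuity of $\lVert\cdot\rVert^2_{H^2(\Omega)}$ with (A1) supplies $\lVert \rho^*\rVert_{H^2}^2 \leq \liminf_k \lVert \rho_k\rVert_{H^2}^2$. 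Combining,
\begin{equation*}
    \hat J(\rho^*) + \eta\lVert \rho^*\rVert^2_{H^2(\Omega)} \leq \liminf_{k \to \infty}\bigl[\hat J(\rho_k) + \eta\lVert \rho_k\rVert^2_{H^2(\Omega)}\bigr] = \inf_{\rho \in P}\bigl[\hat J(\rho) + \eta\lVert \rho\rVert^2_{H^2(\Omega)}\bigr],
\end{equation*}
and since $\rho^* \in P$ the reverse inequality is trivial, so $\rho^*$ is a minimizer. The only genuine subtlety \emph{inside} this proposition is the strong $L^2$ convergence of the diffusion source $S(\varepsilon(u_k))c_k$, which leans crucially on continuity hypothesis \eqref{equation:assumption_on_weak_continuity_of_smoothing_of_norm} beyond mere Lipschitzness; the actual hard work — producing the strong convergences (A2), (A4), (A5) from uniform estimates via compact embeddings $H^2 \Subset C^0$, Gr\"oger regularity of $\Omega$, the Hölder coefficient estimate \eqref{equation:holder_regulariy_of_coefficients}, and maximal $L^p$-regularity — is the appendix's burden, not this proposition's.
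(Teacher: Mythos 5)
Your overall architecture (closedness of $P$, limit passage in each row of $e$, lower semicontinuity) matches the paper's proof, and your treatment of the elastic row, the diffusion rows, the initial conditions, and the lower semicontinuity of the objective is essentially identical to what the paper does. There is, however, one genuine gap in the ODE rows.

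You write that for the ODE rows, (A1), (A4) and (A5) imply that ``all nonlinear right-hand sides converge uniformly.'' This is false as stated: the right-hand sides are $k_6\,a^1_k a^2_k(1+k_7c_k)\bigl(1-\tfrac{c_k}{1-\rho_k}\bigr)$ and $k_4\,a^1_k c_k\bigl(1-\tfrac{b_k}{1-\rho_k}\bigr)$, and the factors $a^i_k$ are only controlled by the \emph{weak} convergence (A3) in $H^1(I,H^1(\Omega),H^1_D(\Omega)^*)$. Weak convergence does not survive multiplication (the product $a^1_ka^2_k$ in particular), so your argument does not identify the limit of the nonlinear source terms. The paper closes this gap by invoking the Aubin--Lions compactness $H^1(I,H^1_D(\Omega),H^1_D(\Omega)^*)\hookrightarrow\hookrightarrow L^2(I\times\Omega)$ to upgrade (A3) to strong $L^2(I\times\Omega)$ convergence of $a^i_k$, whence $a^1_ka^2_k\to a_1^*a_2^*$ in $L^1(I\times\Omega)$; it then passes to the limit in the integrated (fixed-point) form $c_k(t)=\int_0^t k_6a^1_ka^2_k(1+k_7c_k)\bigl(1-\tfrac{c_k}{1-\rho_k}\bigr)\,\mathrm ds$ tested against spatial test functions, combining the $L^1$ convergence of the $a$-product with the uniform convergences (A1), (A4), (A5) of the remaining bounded factors, and concludes via the fundamental lemma of the calculus of variations. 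You need this (or an equivalent compactness step for the $a^i_k$) to make the ODE limit passage rigorous; the rest of your proof stands.
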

\begin{proof}
    There are two things to show. First, we need to guarantee that the tuple $(\rho^*,u^*,a_1^*, a_2^*,b^*)$ still solves the system of equations \ref{equation:state_equations_optimal_control_setting}. And secondly, we need to prove that $\rho^*$ is in fact a minimizer. We start with the second point, assuming for the moment that $(\rho^*,u^*,a_1^*, a_2^*,b^*)$ solves the correct equations. We show that it holds
    \begin{equation*}
        \hat J(\rho^*) + \eta\lVert \rho^* \rVert_{H^2(\Omega)}^2
        \leq 
        \liminf_{k\to\infty}\left[ \hat J(\rho_k) + \eta\lVert \rho_k \rVert^2_{H^2(\Omega)} \right]
        = 
        \min_{\rho\in P}\left[ \hat{J}(\rho) + \eta\lVert  \rho \rVert^2_{H^2(\Omega)} \right],
    \end{equation*}
    that is, the classical lower semi-continuity property required in the application of the direct method of the calculus of variations. Clearly, the map
    \begin{equation*}
        H^2(\Omega)\to \mathbb{R}, \quad \rho\mapsto \eta\lVert \rho \rVert_{H^2(\Omega)}^2
    \end{equation*}
    is convex and norm continuous, hence weakly lower semi-continuous, that is, it holds
    \begin{equation*}
        \eta\lVert \rho^* \rVert_{H^2(\Omega)}^2 \leq \liminf_{k\to\infty}\eta\lVert \rho_k \rVert_{H^2(\Omega)}^2
    \end{equation*}
    by the assumption $\rho_k\rightharpoonup\rho^*$ in $H^2(\Omega)$ on the minimizing sequence. To proceed, remember our structural assumption on the objective function, i.e.,
    \begin{equation*}
        \hat J = \mathcal{F}\left( \hat{\mathcal{E}}(\rho) \right) + \mathcal{G}(b),
    \end{equation*}
    where $\mathcal{F}:C^0(I)\to \mathbb{R}$ and $\mathcal{G}:C^0(I\times\Omega)\to \mathbb{R}$ are assumed to be continuous. Thus it suffices to show that $\mathcal{E}(\rho_k)\to \mathcal{E}(\rho^*)$ in $C^0(I)$. For convenience, let us now set $\mathbb{C}^* = \mathbb{C}(\rho^*,\sigma,b^*)$ and $\mathbb{C}_k = \mathbb{C}(\rho_k,\sigma,b_k)$. We then compute
    \begin{align*}
        \lVert \hat{\mathcal{E}}(\rho_k) - \hat{\mathcal{E}}(\rho^*) \rVert_{C^0(I)}
        &=
        \frac12 \left\lVert \int_\Omega \left[\mathbb{C}_k-\mathbb{C}^*\right]\varepsilon(u_k):\varepsilon(u_k) 
        +
        \mathbb{C}^*\varepsilon(u_k-u^*):\varepsilon(u_k) 
        +
        \mathbb{C}^*\varepsilon(u^*)\varepsilon(u_k-u^*)\mathrm dx
        \right\rVert_{C^0(I)}
        \\
        &\leq
        \lVert \mathbb{C}_k - \mathbb{C}^* \rVert_{C^0(I,L^\infty(\Omega,\mathcal{L}(\mathcal{M}_s)))} \lVert \varepsilon(u_k) \rVert^2_{C^0(I,L^2(\Omega))}
        \\&+
        \lVert \mathbb{C}^* \rVert_{C^0(I,L^\infty(\Omega,\mathcal{L}(\mathcal{M}_s)))} \lVert \varepsilon(u_k - u^*) \rVert^2_{C^0(I,L^2(\Omega))} \lVert \varepsilon(u_k) \rVert^2_{C^0(I,L^2(\Omega))}
        \\&+
        \lVert \mathbb{C}^* \rVert_{C^0(I,L^\infty(\Omega,\mathcal{L}(\mathcal{M}_s)))} \lVert \varepsilon(u^*) \rVert^2_{C^0(I,L^2(\Omega))} \lVert \varepsilon(u_k - u^*) \rVert^2_{C^0(I,L^2(\Omega))}.
    \end{align*}
    Using the continuity assumption for $\mathbb{C}$ and the convergence $b_k\to b^*$ in $C^0(I\times\Omega)$ and $\rho_k\to\rho^*$ in $C^0(\Omega)$ we get that
    \begin{equation*}
        \lVert \mathbb{C}_k - \mathbb{C}^* \rVert_{C^0(I,L^\infty(\Omega,\mathcal{L}(\mathcal{M}_s)))} \to 0.
    \end{equation*}
    Furthermore, the convergence $u_k\to u^*$ in $C^0(I,H^1(\Omega))$ implies both a bound on $\lVert \varepsilon(u_k) \rVert$ and the convergence
    \begin{equation*}
        \lVert \varepsilon(u_k - u^*) \rVert_{C^0(I,L^2(\Omega))}.
    \end{equation*}
    Hence, we established $\hat{\mathcal E}(\rho_k)\to\hat{\mathcal{E}}(\rho^*)$ and conclude
    \begin{equation*}
        \hat J(\rho^*) + \eta\lVert \rho^* \rVert^2_{H^2(\Omega)} \leq \lim_{k\to\infty}\hat J(\rho_k) + \liminf_{k\to\infty}\eta\lVert \rho_k \rVert^2_{H^2(\Omega)} \leq \liminf_{k\to\infty}\left[ \hat J(\rho_k) + \eta\lVert \rho_k \rVert_{H^2(\Omega)}^2 \right]
    \end{equation*}
    which settles the claim. 
    
    We still need to show that $(\rho^*,u^*,a_1^*, a_2^*,b^*)$ is in fact a solution to the system \ref{equation:state_equations_optimal_control_setting}. For the elastic equation we consider for an arbitrary test function $\varphi\in L^2(I,H^1_{D_e}(\Omega))$
    \begin{equation*}
        \iint \mathbb{C}(\rho_k,\sigma,b_k)\varepsilon(u_k):\varepsilon(\varphi)\mathrm dx\mathrm dt = \int_I \int_{\partial\Omega}g_N\varphi\mathrm ds\mathrm dt
    \end{equation*}
    and the continuity assumption on $\mathbb{C}$ and the convergence assumed for $\rho_k$, $b_k$ and $u_k$ are by far sufficient to pass to the limit. 
    
    In the same spirit, we consider the diffusion equations with a test function $\varphi \in L^2(I,H_{D_d}(\Omega))$
    \begin{equation*}
        \int_I\langle d_ta^i_k,\varphi\rangle_{H^1_{D_d}(\Omega)}\mathrm dt + \iint D(\rho_k)\nabla a^i_k\nabla \varphi + k_3 (a_k^i)\varphi \mathrm dx\mathrm dt = \iint k_2 S(\varepsilon(u_k)) c_k\varphi\mathrm dx\mathrm dt, \quad i=1,2.
    \end{equation*}
    For the left-hand side of the diffusion equations we can easily pass to the limit by the weak convergence of $a^i_k$ and the strong convergence of $D(\rho_k)$ that we have available through the continuity assumption on $D$ and $\rho_k\to\rho^*$ in $C^0(\Omega)$. For the right-hand sides we use the implication
    \begin{equation*}
        u_k\to u^*\text{ in }C^0(I,H^1(\Omega))\quad\Rightarrow\quad S(\varepsilon(u_k)) \to S(\varepsilon(u^*)) \text{ in }L^2(\Omega).
    \end{equation*}
    Hence, the limit for the diffusion equations can also be correctly identified. To establish the initial condition of the limit, consider the continuous linear map
    \begin{equation*}
        H^1(I,H^1_{D_d}(\Omega), H^1_{D_d}(\Omega)^*) \to C^0(I,L^2(\Omega)) \to L^2(\Omega), \quad a\mapsto a(0).
    \end{equation*}
    Using the weak sequential continuity of continuous linear maps shows that $a^*(0)$ vanishes, as desired.
    
    To pass to the limit in the cell ODE, we look at its fixed-point equation
    \begin{equation*}
        c_k(t) = \int_0^t k_6a^k_1(s)a^k_2(s)(1 + k_7c_k(s))\left(1 - \frac{c_k(s)}{1 - \rho_k}\right)\mathrm ds,
    \end{equation*}
    which holds in the space $C^0(\Omega)$, for all $t\in I$. Multiplying the above equation by a smooth test function $\varphi \in C_c^\infty(\Omega)$ and integrating over $\Omega$ yields for the left-hand side of the above equation
    \begin{equation*}
        \int_\Omega c_k(t)\varphi\mathrm dx \to \int_\Omega c^*(t)\varphi\mathrm dx \quad \text{with}\quad k\to \infty.
    \end{equation*}
    The convergence $c_k \to c^*$ in the space $C^0(I\times\Omega)$ suffices by far for the above limit passage. Before we treat the limit of the right-hand side we note that the compactness result of Aubin-Lions, see for instance \cite{simon1986compact}, provides the compact embedding
    \begin{equation*}
        H^1(I,H^1_D(\Omega), H^1_D(\Omega)^*) \hookrightarrow\hookrightarrow L^2(I,L^2(\Omega))
    \end{equation*}
    which is essentially due to the fact that the space triple $(H^1_D(\Omega),L^2(\Omega), H^1_D(\Omega)^*)$ satisfies the requirements of the Ehrling Lemma, being in turn guaranteed by the Rellich-Kochandrov compactness result that provides the compact embedding of $H^1_D(\Omega)$ into $L^2(\Omega)$. Note that the boundary regularity in for $\Omega$ is chosen to support the Rellich-Kochandrov theorem. Hence we get the convergence
    \begin{equation*}
        a^k_1 a^k_2 \to a^*_1 a^*_2 \quad\text{in}\quad L^1(I,L^1(\Omega))\Tilde{=}L^1(I\times\Omega).
    \end{equation*}
    Using the above convergence and the convergence of $c_k\to c^*$ in $C^0(I\times\Omega)$ and $\rho_k \to \rho^*$ in $C^0(\Omega)$ we compute, employing Fubini's theorem and pass to the limit
    \begin{align*}
        \int_\Omega \int_0^t k_6a^k_1 a^k_2(1 + k_7c_k)\left(1 - \frac{c_k}{1 - \rho_k}\right)\mathrm ds \varphi \mathrm dx &= \int_0^t\int_\Omega k_6 a^k_1 a^k_2 (1 + k_7c_k)\left(1 - \frac{c_k}{1 - \rho_k}\right)\varphi \mathrm ds \mathrm dx
        \\
        &\to
        \int_0^t\int_\Omega k_6a^*_1 a^*_2 (1 + k_7c^*)\left(1 - \frac{c^*}{1 - \rho^*}\right)\varphi \mathrm dx \mathrm ds
        \\
        &=
        \int_\Omega\int_0^t k_6 a^*_1 a^*_2 (1 + k_7c^*)\left(1 - \frac{c^*}{1 - \rho^*}\right) \mathrm ds \varphi \mathrm dx
    \end{align*}
    Inferring the fundamental lemma of the calculus of variations we obtain
    \begin{equation*}
        c^*(t) = \int_0^t k_6 a_1^* a_2^* (1+k_7c^*)\left( 1 - \frac{c^*}{1-\rho^*} \right)\mathrm ds
    \end{equation*}
    for every $t \in I$. This implies that $c^*$ satisfies the correct limit equation. Obviously we can repeat the same argument to guarantee that $b^*$ satisfies an appropriate limit equation.
\end{proof}
\begin{remark}
    Via discussing the requirements $(A1) - (A5)$ above, we give a rough idea of their proof. 
    \begin{itemize}
        \item [(i)] The fact that $J$ is bounded from below implies that the regularization term $\eta\lVert\cdot\rVert^2_{H^2(\Omega)}$ automatically leads to an $H^2(\Omega)$ bound on any minimizing sequence $(\rho_k)\subset P$. Thus there exists $\rho^* \in P$ and a (not re-labeled) subsequence $(\rho_k)$ with $\rho_k \rightharpoonup \rho^*$ in $H^2(\Omega)$. Employing the compactness
        \begin{equation*}
            H^2(\Omega) \hookrightarrow\hookrightarrow C^0(\Omega)
        \end{equation*}
        that holds for three spatial dimensions, this implies the desired convergence $\rho_k \to \rho^*$ in $C^0(\Omega)$.
        
        \item[(ii)] A uniform bound in $C^0(I,H^1(\Omega))$ norm of the sequence $(u_k)$ is easily established as Lemma \ref{lemma:bound_for_u_k} shows. However, this does not provide assumption (A2) which can only be achieved through a compactness argument. In fact -- given H\"older continuous coefficients functions of $\mathbb{C}(\rho_k,\sigma, b_k)$ -- one is able to show that for every $t\in I$ the solution $u_k(t)$ is a member of $H^{1+\theta}(\Omega)$ for a sufficiently small $\theta >0$ as an application of the main theorem of \cite{haller2019higher}. Compare also to Lemma \ref{lemma:higher_regularity_elliptic_equation} for a discussion of the applicability of this result. Then, given the relative compactness of the sequences $(b_k)$ in $C^0(I\times\Omega)$ and $(\rho_k) \subset C^0(\Omega)$ one can apply a vector-valued version of the Arzel\`a-Ascoli theorem to derive the relative compactness of $(u_k)$ in $C^0(I,H^1(\Omega))$. As discussed in (iv), the compactness of $(b_k)$ relies on a H\"older regularity result for diffusion equations.
        \item[(iii)] Similarly, a uniform bound for the sequences $(a^i_k)$ in $H^1(I,H^1(\Omega),H^1_D(\Omega)^*)$ norm can be established by standard computations, thus implying the desired existence of $a_i^*$ and corresponding subsequence. We provide the details in Lemma \ref{lemma:bound_for_a_k}.
        \item[(iv)] The existence of a subsequence $(b_k)$ and $b^*\in C^0(I\times\Omega)$ with $b_k\to b^*$ in $C^0(I\times\Omega)$ requires the biggest effort. We achieve this by deriving a $W^{1,2}(I,C^\alpha(\Omega))$ bound on $(b_k)$ for an $\alpha \in (0,1)$. Investigating the structure of the cell and bone ODEs, we see that such a regularity and bound can only be established if we are able to show that the sequences $(a^i_k)$ are bounded in $L^2(I,C^\alpha(\Omega))$. It is this regularity and boundedness result for the diffusion equation on which the whole proof rests, we state it in Lemma \ref{lemma:the_bound_for_a}, but the derivation of this result is the topic of \cite{dondl2021regularity}.
        
        Coming back to the boundedness of $(b_k)$ in $W^{1,2}(I,C^\alpha(\Omega))$, note that this implies the desired existence of $b^*\in C^0(I\times\Omega)$ together with a subsequence $b_k \to b^*$ in $C^0(I\times\Omega)$ via the embeddings
        \begin{equation*}
            W^{1,2}(I,C^\alpha(\Omega)) \hookrightarrow C^\beta(I,C^\alpha(\Omega)) \hookrightarrow C^{\min(\alpha,\beta)}(I\times\Omega)\hookrightarrow\hookrightarrow C^0(I\times\Omega).
        \end{equation*}
        \item[(vi)] To summarize: $(A1)$ is clear, $(A3)$ is established in lemma \ref{lemma:bound_for_a_k}, $(A2)$, $(A4)$ and $(A5)$ rely on the regularity result for diffusion equations stated in Lemma \ref{lemma:the_bound_for_a} and the main result of \cite{haller2019higher}. The derivation of the $W^{1,2}(I,C^\alpha(\Omega))$ bound for $(b_k)$ is carried out in Lemma \ref{lemma:bound_for_b_k}, the bound for $(c_k)$ in Lemma \ref{lemma:bound_for_c_k}.
    \end{itemize}
\end{remark}

\section{Simulations}\label{section:simulations}
In this section we present numerical simulations of optimal scaffold density distributions. Our motivation are large tibial defects and we are especially interested in stress shielding effects caused by external fixation of the scaffold. Our numerical findings indicate that a three dimensional scaffold density optimization is of substantial importance in the mitigation of stress shielding effects.

\subsection{Stress Shielding}\label{section:stress_shielding}
Bone adapts according to the mechanical environment it is subjected to. This important property of bone is well known and commonly referred to as Wolff's law, see \cite{wolff1892gesetz}. It has far ranging consequences for bone tissue engineering. More precisely, prosthetic implants are often made of less elastic materials than bone and thus change the mechanical environment in their vicinity. This often leads to bone regions that are subjected to less stress and consequently bone resorption when compared to a healthy bone, a phenomenon known as stress shielding which has been extensively studied, e.g., in the context of total hip arthroplasty, see \cite{sumner1992determinants, huiskes1992relationship, behrens2008numerical, arabnejad2017fully}. The bone resorption in the vicinity of the prosthetic implant can lead to serious complications such as periprosthetic fracture and aseptic loosening and revision surgeries -- if so needed -- can be complicated, we refer to \cite{arabnejad2017fully}.

It is to be expected that stress shielding effects do also play an important role in scaffold mediated bone growth, for example caused through the external fixation of the scaffold by a metal plate. This leads to under-loading in the vicinity of the fixating element. To be able to quantify these effects it is crucial to use a three dimensional computational model, a one dimensional simplification as for instance discussed by \cite{poh2019optimization} cannot resolve the asymmetries that induce the effect. 

\subsection{The Computational Model}
Our concrete model setup is almost identical to the one presented in \cite{dondl2021efficient} as far as the state equations are concerned. For the readers convenience we briefly repeat the state equations and boundary conditions
\begin{align*}
    0 &= \operatorname{div}\Big( \mathbb{C}(\rho,\sigma, b)\varepsilon(u) \Big)
    \\
    d_ta_1 &= \operatorname{div}\Big(  D(\rho) \nabla a_1 \Big) + k_{2,1}|\varepsilon(u)|c - k_{3,1}a_1
    \\
    d_ta_2 &= \operatorname{div}\Big(  D(\rho) \nabla a_2 \Big) + k_{2,2}|\varepsilon(u)|c - k_{3,2}a_2
    \\
    d_tc &= k_6a_1a_2(1 + k_7c)\bigg( 1 - \frac{c}{1 - \rho} \bigg)
    \\
    d_tb &= k_4a_1c\bigg( 1 - \frac{b}{1 - \rho} \bigg).
\end{align*}
We use the same boundary conditions as in \cite{dondl2021efficient} with the exception of the elastic equation that is subjected to pure Neumann boundary conditions with a constant surface traction stemming from a force of $0.3\operatorname{kN}$ which is applied to the top and bottom of the cylindrical domain. We propose to view this as a maximal force that repeatedly occurs, compare to the discussion in \cite{dondl2021efficient} for a more detailed reasoning. The bioactive molecules $a_1,a_2$ are assumed to be in saturation adjacent to the initial, healthy bone matrix at the top and bottom of the domain and a scenario without preseeding throughout the domain (i.e., a zero initial condition) is considered. For the model constants and functional relationships we refer to \cite{dondl2021efficient}.

As an objective function to measure a scaffold performance, we use the maximum over the temporal evolution of the scaffold-bone composite's elastic energy. Due to the softload in the numerical experiments, the reciprocal of the elastic energy is proportional to the elastic modulus of the scaffold-bone system; a reasonable measure of stability. The optimization's goal is to minimize this temporal maximum while respecting the state equations and an additional constraint on $\rho$ to not take values outside the unit interval\footnote{A scaffold volume fraction should always take values between zero and one in order to be reasonably interpreted as a volume fraction. More restrictive, $\rho$ should even be bounded away from zero and one.} In formulas, we denote by $\mathcal{E}$ the elastic energy
\begin{equation*}
    \mathcal{E}(y,\rho)(t) = \frac12\int_\Omega \mathbb{C}(\rho(x),\sigma(t),b(t,x))\varepsilon(u(t,x)):\varepsilon(u(t,x))\mathrm{d}x
\end{equation*}
where $y=(u,a_1,a_2,c,b)$ is the state variable. The minimization problem is the task to find
\begin{equation}\label{equation:repeat_optimization_problem_simulation_section}
    \rho \in \operatorname{argmin}\left[ \max_{t\in I}\mathcal{E}(y,\rho)(t) \right], \quad \text{subjected to }e(y,\rho)=0\text{ and }\rho\in P,
\end{equation}
where $P$ encodes that $\rho$ is bounded away from zero and one. Numerically, we replace the temporal maximum by an $L^p(I)$ norm (with, e.g.,\ $p=5$) to smoothly approximate it. The pointwise constraint $\rho\in P$ is treated by a soft penalty and $e(y,\rho)=0$ by the adjoint method.
\subsection{Numerical Implementation}
The numerical realization of the PDE constrained optimization problem is based on the adjoint approach, see for instance \cite{hinze2008optimization} for a derivation of the method. This means that the constraint $e(y,\rho)=0$ (in the notation of Section~\ref{section:mathematical_formulation}) is parametrized by the solution operator $\rho\mapsto \phi(\rho)$ satisfying $e(\phi(\rho),\rho)=0$ eliminating the constraint in the optimization. Computing the derivative of the reduced objective with respect to $\rho$ yields an adjoint equation that is structurally similar to the state equations \eqref{equation:state_equations_optimal_control_setting}. Having access to the derivative of the reduced objective, we use an $L^2(\Omega)$ gradient flow in order to solve the optimization problem \eqref{equation:repeat_optimization_problem_simulation_section}. As this leads to reasonable results, more sophisticated optimization algorithms were not deemed necessary.

We use the Computational Geometry Algorithms Library CGAL (\cite{boissonnat2000triangulations}) to generate tetrahedral meshes for the spatial resolution of diffusion and elasticity via P1 finite elements in both the state and adjoint equation. The meshes used in our simulations consist of roughly $40$k tetrahedrons. The time dependence and couplings in the equations are treated by a semi-implicite ansatz, using only the quantities explicitly that are not available at a current time step due to the couplings of the equations. The ODEs are solved on every element separately, yielding a spatially constant approximation of their solution. Due to the comparatively simple structure of the time dependent equations, a coarse time stepping can be employed with one temporal increment corresponding to one week of the regeneration process.

\subsection{Discussion}
\begin{figure}
    \centering
    \begin{subfigure}{0.49\linewidth}
    \includegraphics[width=\linewidth]{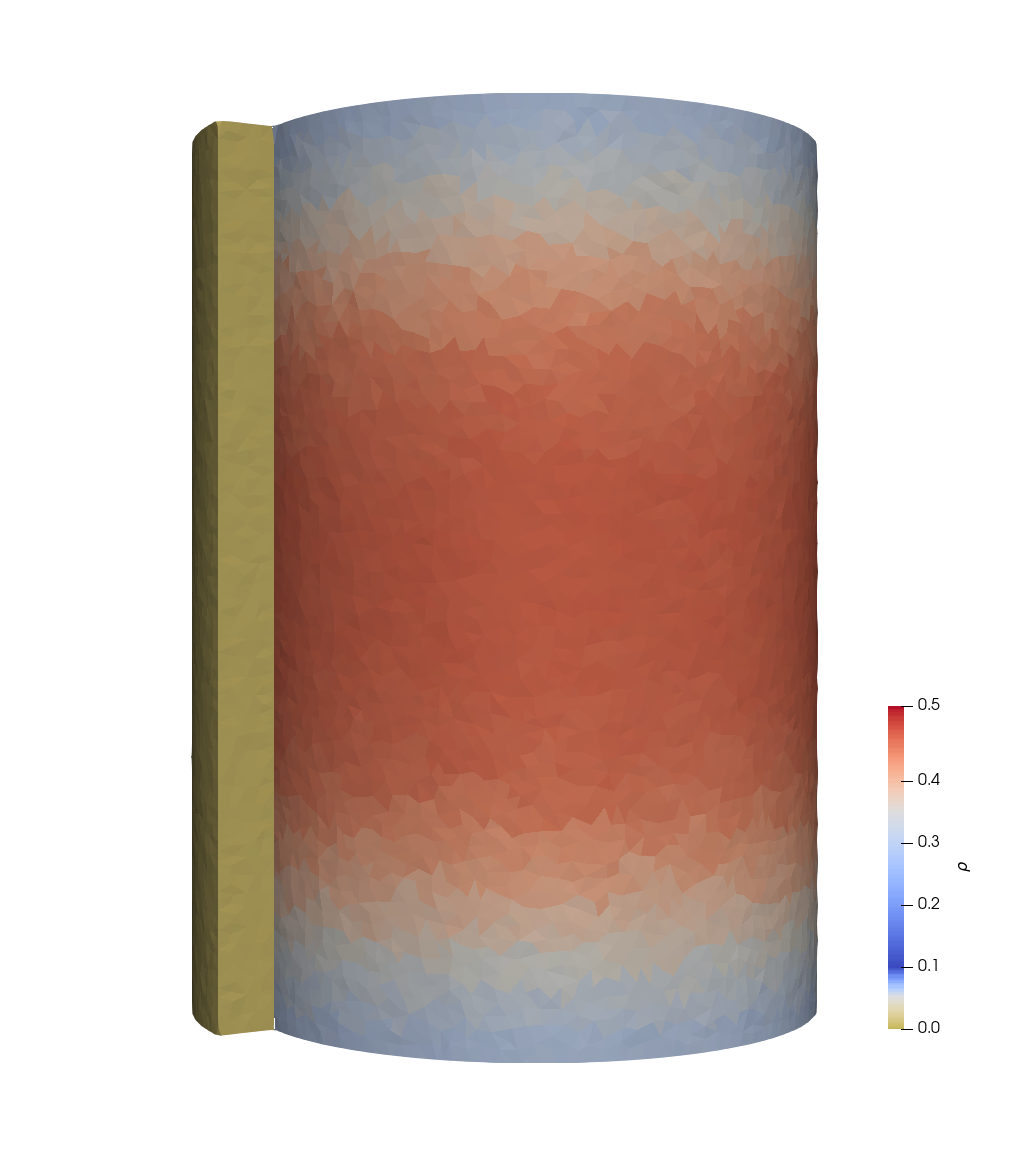}
     \subcaption{Scaffold Architecture A}
    \end{subfigure}
    \begin{subfigure}{0.49\linewidth}
    \includegraphics[width=\linewidth]{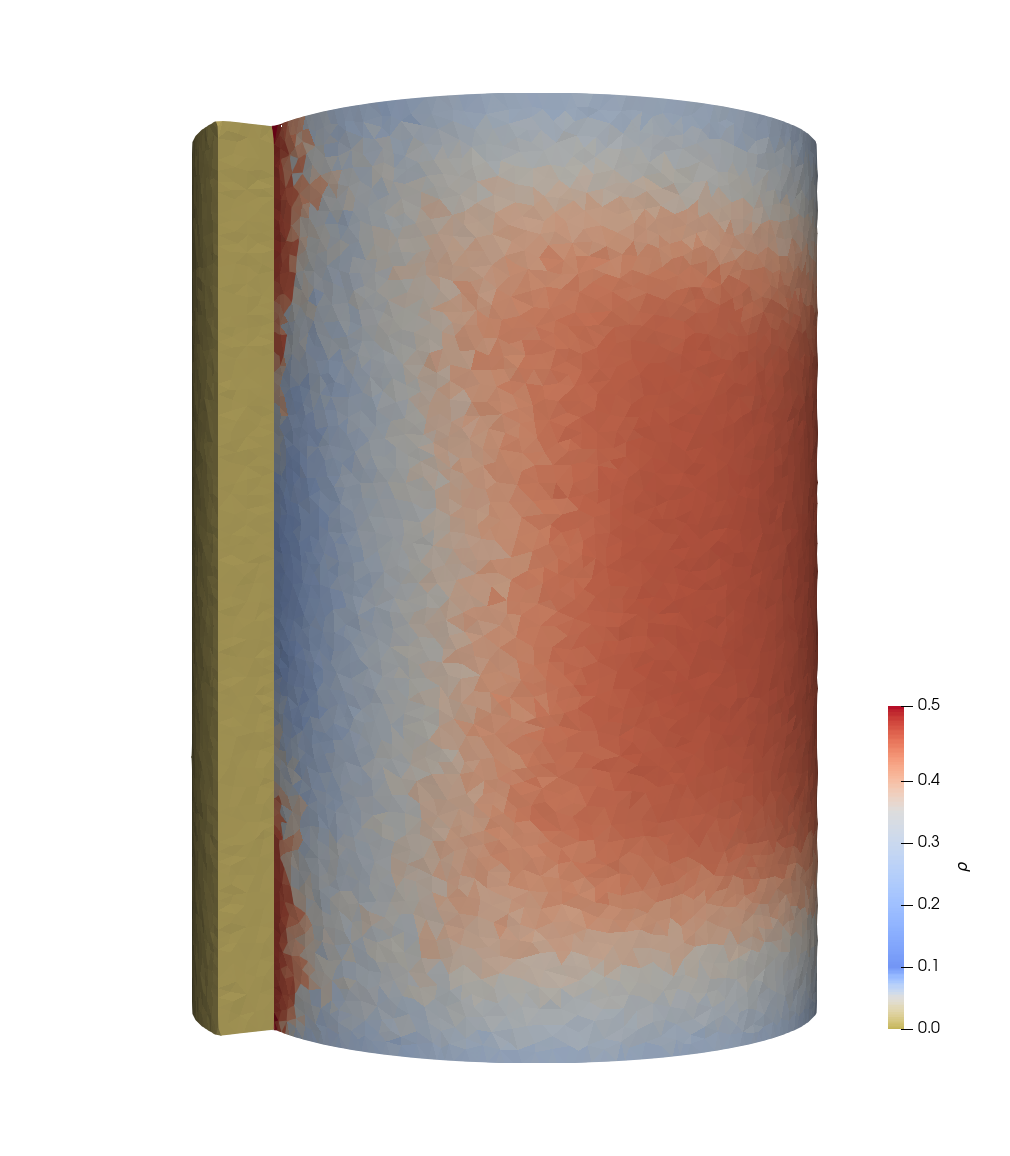}
     \subcaption{Scaffold Architecture B}
    \end{subfigure}
    \caption{Two optimized scaffold densities for different mechanical environments. Architecture A is the result of a one dimensional optimization routine which has afterwards been transferred to a three dimensional setting whereas in architecture B the metal fixateur is included in the optimization routine in a three dimensional model. }
    \label{fig:optimized_scaffolds}
\end{figure}
In figure~\ref{fig:optimized_scaffolds} we display two optimized scaffold densities. Architecture A corresponds to the outcome of an essentially one dimensional experiment setup. To produce architecture A, the fixateur (marked in gold) is excluded in the computations and a compressive softload is applied on the top and bottom of the cylindrical domain. This parallels the optimization routine proposed in \cite{poh2019optimization} and produces a qualitatively similar result. The scaffold architecture B is obtained from an optimization routine including the fixateur and thus takes into account the drastic change in mechanical environment introduced by the fixating element. Using external fixation, the mechanical stimulus is almost absent in the vicinity of the fixateur.  Naturally, this influences the scaffold optimization and an important merit of a three dimensional model is the ability to resolve these stress shielding effects and adapt the architecture of an optimal scaffold accordingly.

The architecture A in figure \ref{fig:optimized_scaffolds} depicts a scaffold with a higher density in the middle region. A reasonable outcome, as regenerated bone grows back at the scaffold ends where it is attached to the intact bone tissue. Therefore, the central scaffold region needs to maintain structural integrity for a longer time by itself. The overall shape is very similar to the results obtained by \cite{poh2019optimization} with a one dimensional model which is not surprising as our experiment is essentially one dimensional.

The architecture B, corresponding to the experiment including the fixateur depicted in the right of figure \ref{fig:optimized_scaffolds}, shows a considerably different distribution. A higher density in the central part is favorable for the same reason as in the experiment excluding the fixateur, however, in vicintiy of the stiff metal plate a comparatively low scaffold density is predicted. High porosity in this region of the scaffold is beneficial as it increases the mechanical stimulus due to reduced stability and enhances vascularization\footnote{In our model vascularization is resolved through the diffusion of bio-active molecules.}. Both effects promote bone ingrowth in the region close to the fixateur. 

To illustrate the benefits with respect to stress shielding of the scaffold architecture B over scaffold architecture A, compare to figure~\ref{fig:optimized_scaffolds}, we use the architecture A in a numerical experiment \emph{including external fixation}. We then compare the strain distributions for the two architectures. Figure~\ref{fig:initial_strain_distribution} shows the strain magnitude distributions at the initial time-point when no bone has regenerated yet. We clearly observe that architecture B mitigates stress shielding in the vicinity of the fixateur in comparison to architecture A. This trend is sustained two month in the regeneration process, as can be observed in figure~\ref{fig:2months_strain_distribution}.  We remark that the reduction of stress-shielding is not directly part of the objective function with respect to which the optimization is carried out. Rather, this effect is an implicit favorable consequence of the objective function~\eqref{equation:repeat_optimization_problem_simulation_section} that advocates for its usage in scaffold design optimization. 

\begin{figure}
    \centering
    \begin{subfigure}{0.48\linewidth}
    \includegraphics[width=\linewidth]{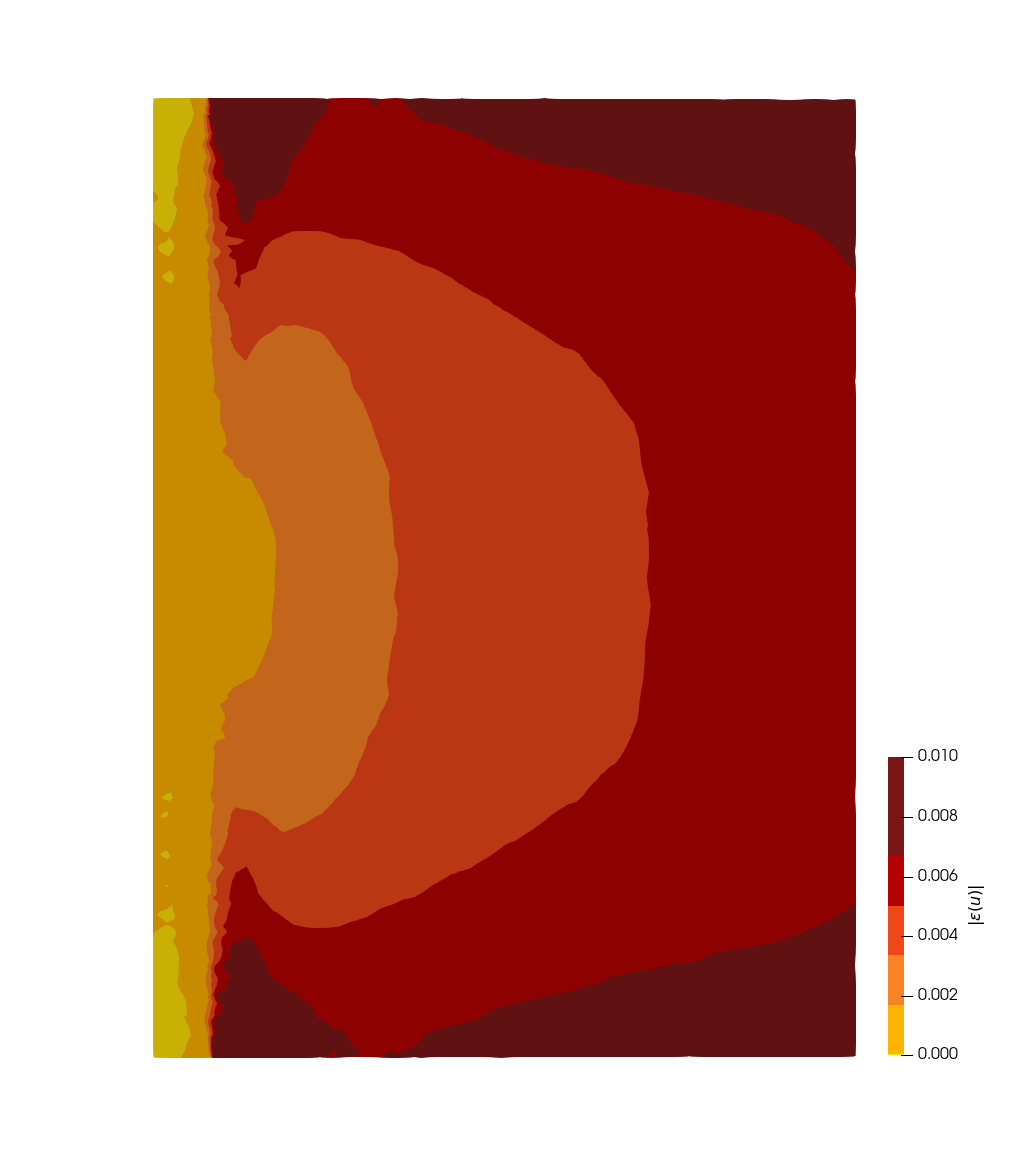}
    \end{subfigure}
    \begin{subfigure}{0.48\linewidth}
    \includegraphics[width=\linewidth]{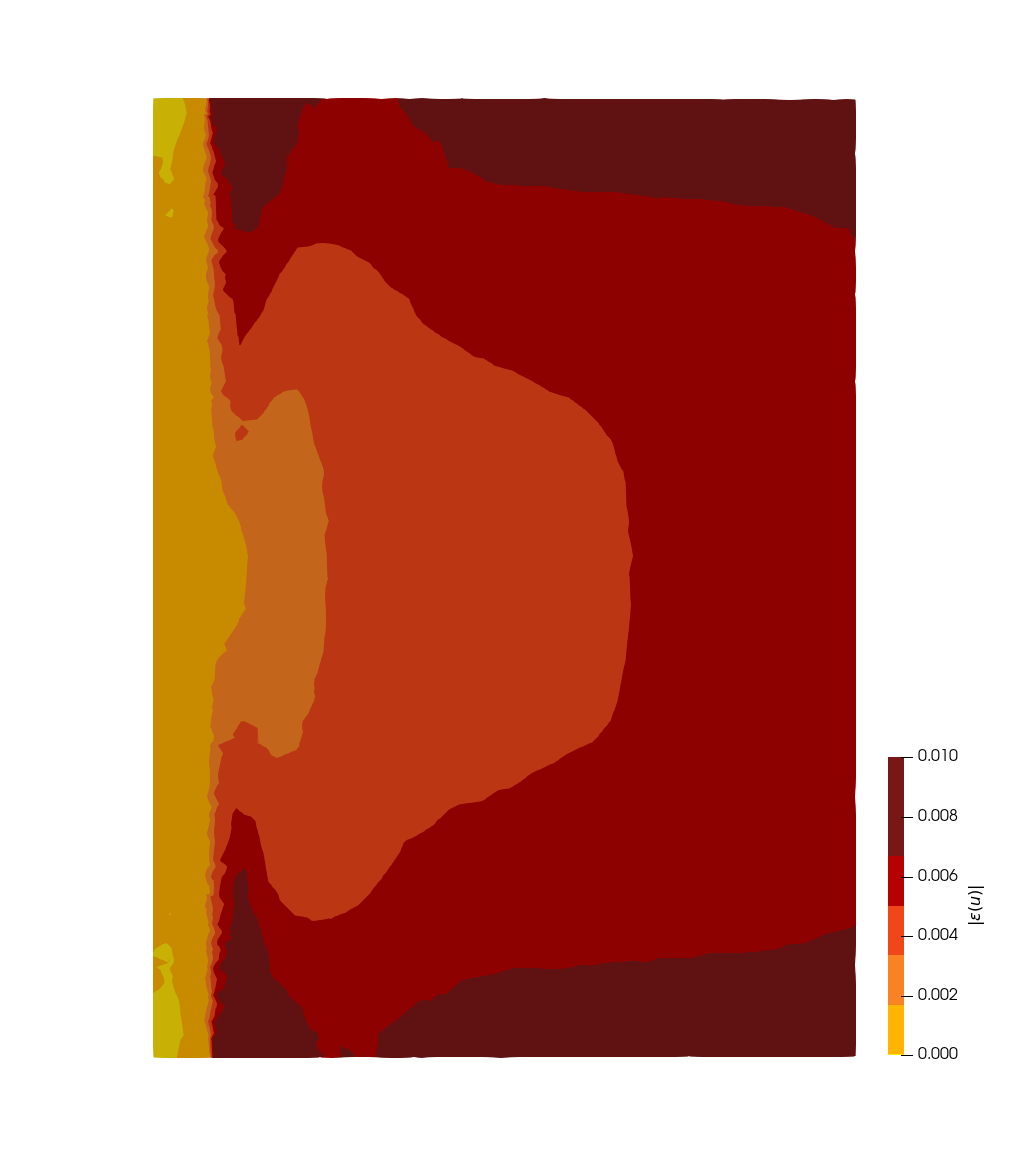}
    \end{subfigure}
    \caption[Comparison of Strain Magnitudes before Bone Regeneration]{The strain magnitude distributions for the scaffold architectures displayed in figure~\ref{fig:optimized_scaffolds} are compared in a vertical cut plane before the healing process, i.e., when no bone has regenerated yet. The left picture corresponds to architecture A and the right figure to architecture B of figure~\ref{fig:optimized_scaffolds}.}
    \label{fig:initial_strain_distribution}
\end{figure}

\begin{figure}
    \begin{subfigure}{0.48\linewidth}
    \includegraphics[width=\linewidth]{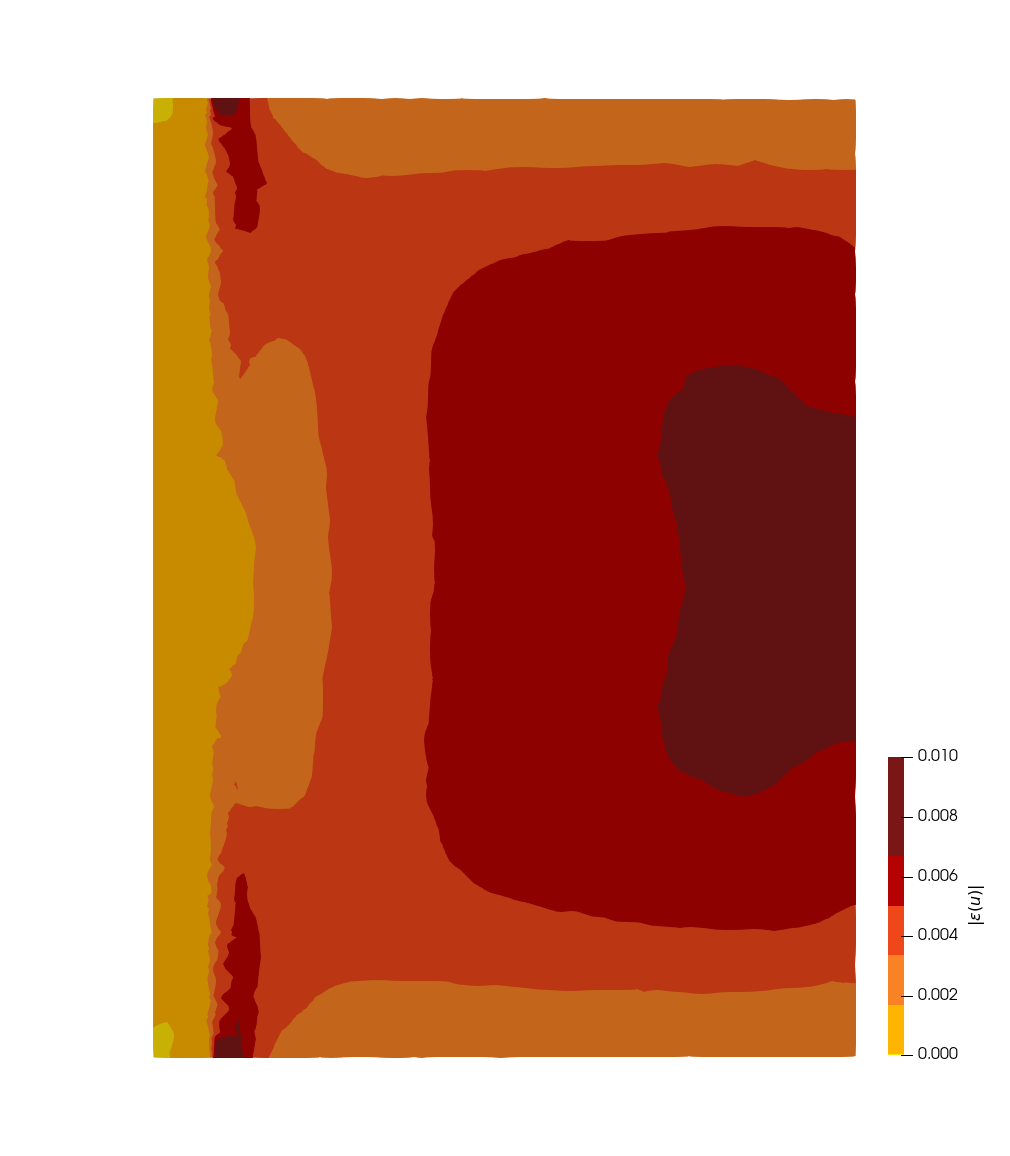}
    \end{subfigure}
    \begin{subfigure}{0.48\linewidth}
    \includegraphics[width=\linewidth]{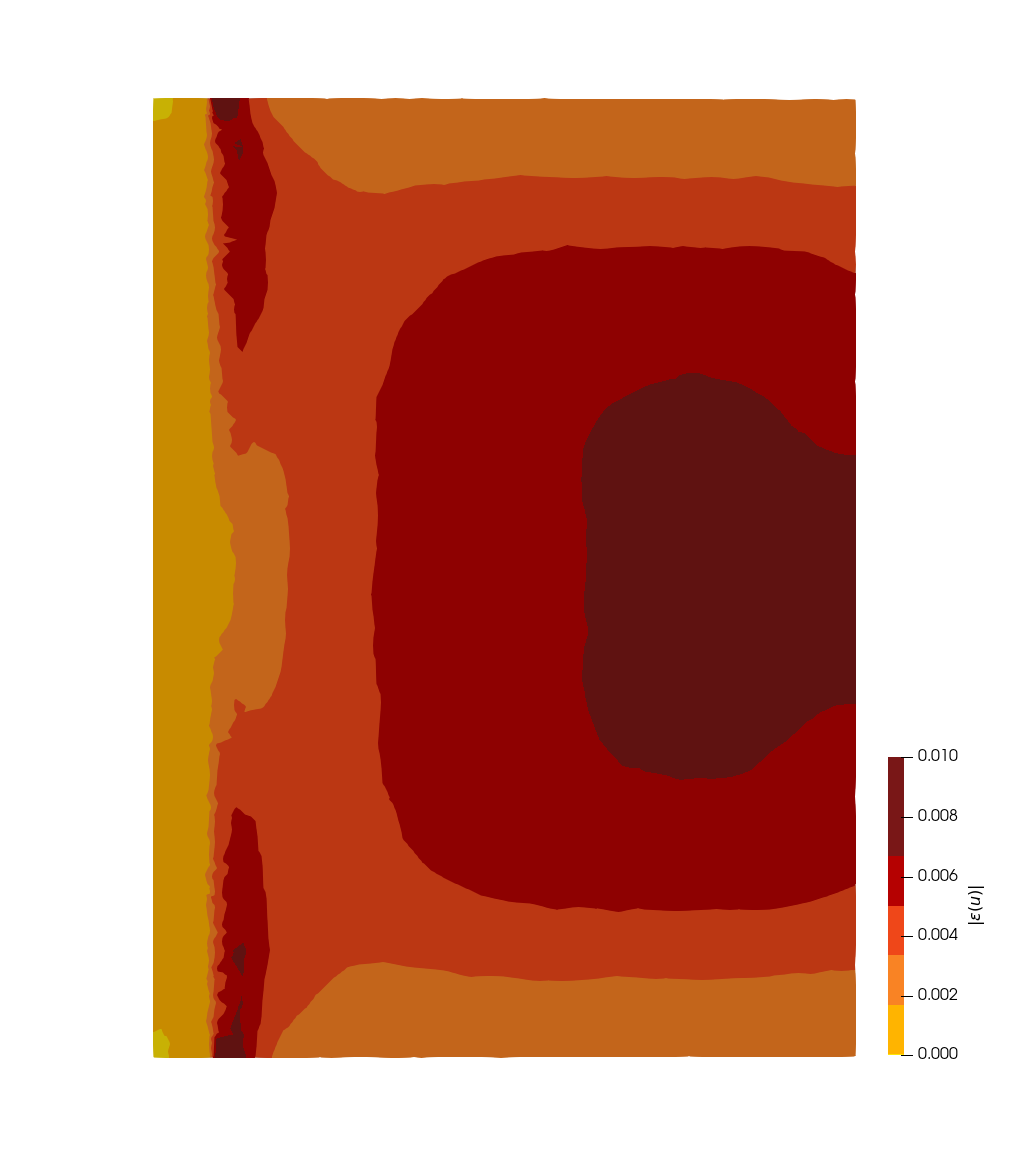}
    \end{subfigure}
    \caption[Comparison of Strain Magnitudes after two Months of Bone Regeneration]{The strain magnitude distributions for the scaffold architectures displayed in figure~\ref{fig:optimized_scaffolds} are compared in a vertical cut plane two months in the healing process. The left picture corresponds to architecture A and the right figure to architecture B of figure~\ref{fig:optimized_scaffolds}.}
    \label{fig:2months_strain_distribution}
\end{figure}

\section{Conclusions and Future Research}
We analyzed a three dimensional, homogenized model for bone growth in the presence of a porous, bio-resorbable scaffold and considered the associated problem of optimal scaffold design. This leads to a PDE constrained optimization problem for which we proved the existence of an optimal control, i.e.,\ an optimal scaffold density distribution. We presented proof-of-concept numerical experiments illustrating the benefits of a three dimensional optimization routine. For future work, we propose to use the computational model in detailed numerical simulations and to study the optimized scaffold architectures in vivo.

\bibliographystyle{apalike}
\bibliography{references}

\appendix
\section{Proofs of the Main Results}\label{section:proofs_of_the_main_results}
\begin{lemma}[$C^0(I,H^1(\Omega))$ bound for $u$]\label{lemma:bound_for_u_k}
    Let $\mathbb{C} \in C^0(I,L^\infty(\Omega,\mathcal{L}(\mathcal{M}_s)))$ be uniformly elliptic with ellipticity constant $\lfloor \mathbb{C} \rfloor$ independent of $t\in I$ and $x\in\Omega$, i.e., it holds
    \begin{equation*}
        \mathbb{C}(t,x)M:M \geq \lfloor \mathbb{C} \rfloor\lvert M \rvert^2, \quad\text{for all }M\in \mathcal{L}(\mathcal{M}_s)\text{ and }(t,x)\in I\times\Omega.
    \end{equation*}
    Furthermore, let $f \in C^0(I,H^1_D(\Omega)^*)$ be a fixed right-hand side. Then the unique solution $u\in L^2(I,H^1_D(\Omega))$ to 
    \begin{equation}\label{equation:abstract_elastic_equation}
        \iint \mathbb{C}\varepsilon(u):\varepsilon(\cdot)\mathrm dx \mathrm dt = \int_I \langle f,\cdot \rangle_{H_D^{1}(\Omega)}\mathrm dt \quad \text{in } L^2(I,H^1_D(\Omega))^*
    \end{equation}
    is a member of the space $C^0(I,H^1_D(\Omega))$ and satisfies
    \begin{equation*}
        \lVert u \rVert_{C^0(I,H^1(\Omega))} \leq C\left( \lfloor \mathbb{C}\rfloor, C_{\text{Korn}} \right)\cdot \lVert  f \rVert_{C^0(I,H^1_D(\Omega)^*)}.
    \end{equation*}
\end{lemma}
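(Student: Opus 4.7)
The plan is to decouple the problem in time. For each fixed $t \in I$, the bilinear form $(u,v) \mapsto \int_\Omega \mathbb{C}(t)\varepsilon(u):\varepsilon(v)\,dx$ on $H^1_D(\Omega)\times H^1_D(\Omega)$ is continuous by the $L^\infty$ bound on $\mathbb{C}(t)$ and coercive by the uniform ellipticity constant $\lfloor \mathbb{C} \rfloor$ combined with Korn's inequality (which applies because $|\Gamma_D^e|\neq 0$ in the Setting). Lax--Milgram therefore produces, for every $t \in I$, a unique $u(t) \in H^1_D(\Omega)$ solving $\int_\Omega \mathbb{C}(t)\varepsilon(u(t)):\varepsilon(v)\,dx = \langle f(t), v \rangle$ for all $v \in H^1_D(\Omega)$.

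First I would establish the $C^0(I,H^1(\Omega))$ norm estimate. Testing the pointwise equation with $v = u(t)$, using ellipticity on the left and duality on the right, then applying Korn's inequality, yields
\begin{equation*}
    \lVert u(t) \rVert_{H^1(\Omega)} \leq \frac{C_{\text{Korn}}}{\lfloor \mathbb{C} \rfloor}\,\lVert f(t) \rVert_{H^1_D(\Omega)^*}
\end{equation*}
for every $t \in I$. Taking the supremum over $t$ produces the desired bound, provided we know that $u$ so constructed is actually in $C^0(I,H^1(\Omega))$.

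Next I would prove the continuity $t \mapsto u(t) \in H^1_D(\Omega)$. For $t_1, t_2 \in I$ subtract the two pointwise equations and rewrite as
\begin{equation*}
    \int_\Omega \mathbb{C}(t_1)\varepsilon(u(t_1)-u(t_2)):\varepsilon(v)\,dx = \langle f(t_1)-f(t_2), v \rangle + \int_\Omega [\mathbb{C}(t_2)-\mathbb{C}(t_1)]\varepsilon(u(t_2)):\varepsilon(v)\,dx.
\end{equation*}
Testing with $v = u(t_1)-u(t_2)$ and invoking ellipticity, Korn's inequality, and the uniform bound on $\lVert u(t_2) \rVert_{H^1}$ already established, I obtain
\begin{equation*}
    \lVert u(t_1)-u(t_2) \rVert_{H^1_D(\Omega)} \leq C\bigl( \lVert f(t_1)-f(t_2)\rVert_{H^1_D(\Omega)^*} + \lVert \mathbb{C}(t_1)-\mathbb{C}(t_2)\rVert_{L^\infty(\Omega,\mathcal{L}(\mathcal{M}_s))} \lVert f \rVert_{C^0(I,H^1_D(\Omega)^*)} \bigr).
\end{equation*}
The assumed continuity $\mathbb{C} \in C^0(I,L^\infty)$ and $f \in C^0(I,H^1_D(\Omega)^*)$ make the right-hand side vanish as $t_1 \to t_2$, so $u \in C^0(I,H^1_D(\Omega))$.

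Finally I would verify that this pointwise construction coincides with the unique $L^2(I,H^1_D(\Omega))$ solution of the abstract equation \eqref{equation:abstract_elastic_equation}. Since $u \in C^0(I,H^1_D(\Omega)) \subset L^2(I,H^1_D(\Omega))$, integrating the pointwise identity against an arbitrary test function $v \in L^2(I,H^1_D(\Omega))$ yields \eqref{equation:abstract_elastic_equation}; uniqueness in $L^2(I,H^1_D(\Omega))$ follows by testing the homogeneous problem against the difference of two solutions and applying ellipticity plus Korn to force this difference to vanish almost everywhere. There is no real obstacle here: the argument is essentially a parametrized Lax--Milgram estimate, and the only care needed is to verify time-continuity separately, since being an $L^2$-in-time solution does not automatically imply continuity in $t$ with values in $H^1_D(\Omega)$.
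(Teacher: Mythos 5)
Your proposal is correct and follows essentially the same route as the paper's proof: the same energy estimate via ellipticity and Korn's inequality for the uniform bound, and the same splitting of $\mathbb{C}(t_1)\varepsilon(u(t_1)-u(t_2))$ against $[\mathbb{C}(t_2)-\mathbb{C}(t_1)]\varepsilon(u(t_2))$ for time-continuity. The only cosmetic difference is direction — you build $u(t)$ pointwise by Lax--Milgram and then identify it with the $L^2$-in-time solution, whereas the paper derives the pointwise a.e.\ equation from the space-time formulation — which if anything makes the passage to a genuinely continuous representative slightly cleaner.
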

\begin{proof}
    The equation \eqref{equation:abstract_elastic_equation} implies that $u$ satisfies almost everywhere in $I$
    \begin{equation*}
        \underbrace{\int_\Omega \mathbb{C}(t)\varepsilon(u(t)):\varepsilon(\cdot)\mathrm dx}_{\eqqcolon \mathcal{T}_tu(t)} = f(t) \quad\text{in }H^1_D(\Omega)^*
    \end{equation*}
    upon applying the isometry $L^2(I,H^1_D(\Omega))^* \to L^2(I,H^1_D(\Omega)^*)$ to both sides of the equation. Clearly, testing with $u(t)$ yields, inferring Korn's inequality,
    \begin{equation*}
        \lfloor \mathbb{C} \rfloor C_{\text{Korn}} \cdot \lVert u \rVert^2_{H^1_D(\Omega)} \leq \lfloor \mathbb{C} \rfloor \cdot \lVert \varepsilon(u) \rVert_{L^2(\Omega)}^2 \leq \lVert f(t) \rVert_{H^1_D(\Omega)^*}\lVert u(t) \rVert_{H^1_D(\Omega)}. 
    \end{equation*}
    Hence, 
    \begin{equation*}
        \lVert u(t) \rVert_{H^1_D(\Omega)} 
        \leq
        \left( \lfloor \mathbb{C} \rfloor C_{\text{Korn}} \right)^{-1} \lVert f(t) \rVert_{H^1_D(\Omega)^*}
        \leq
        \left( \lfloor \mathbb{C} \rfloor C_{\text{Korn}} \right)^{-1} \lVert f \rVert_{C^0(I,H^1_D(\Omega)^*)},
    \end{equation*}
    meaning that the $H^1(\Omega)$ bound on $u(t)$ is independent of $t\in I$. To show that $u$ is continuous in time, we compute for $t,s \in I$
    \begin{equation*}
        f(t) - f(s) = \mathcal{T}_tu(t) - \mathcal{T}_su(s) = \mathcal{T}_t(u(t)-u(s)) + \mathcal{T}_t(u(s)) - \mathcal{T}_su(s).
    \end{equation*}
    Using the coercivity of $\mathcal{T}_t$ we find
    \begin{equation*}
        \lVert u(t) - u(s) \rVert_{H^1(\Omega)} \leq \frac{1}{\lfloor \mathbb{C} \rfloor C_{\text{Korn}}}\left[ \lVert f(t) - f(s) \rVert_{H^1(\Omega)^*} + \lVert \mathcal{T}_tu(s) - \mathcal{T}_su(s) \rVert_{H^1_D(\Omega)^*} \right]
    \end{equation*}
    By the assumption $f\in C^0(I,H^1_D(\Omega)^*)$ it is clear that the first term above tends to zero when $|t-s|\to 0$. It remains to estimate
    \begin{align*}
        \lVert \mathcal{T}_tu(s) - \mathcal{T}_su(s) \rVert_{H^1_D(\Omega)^*} 
        & \leq 
        \sup_{\lVert \varphi \rVert_{H^1_D(\Omega)}\leq 1}\int_\Omega \left[ \mathbb{C}(t) - \mathbb{C}(s) \right]\varepsilon(u(s)):\varepsilon(\varphi)\mathrm dx
        \\
        & \leq 
        \lVert \mathbb{C}(t) - \mathbb{C}(s) \rVert_{L^\infty(\Omega, \mathcal{M}_s)}\lVert u(s) \rVert_{H^1_D(\Omega)}.
    \end{align*}
    The time-independent bound on $\lVert u(s) \rVert_{H^1_D(\Omega)}$ and the continuity assumption on $\mathbb{C}$ imply the assertion.
\end{proof}
\begin{lemma}[Equi-Continuity]\label{lemma:equi_continuity_of_uk} Assume $(\rho_k)\subset P$ is any sequence, $(b_k)\subset W_{\rho_k}$ is an equi-continuous sequence in $C^0(I,C^0(\Omega))$ and $(f_k)$ is a equi-continuous and bounded sequence in $C^0(I,H^1_D(\Omega)^*)$. Assume that $\mathbb{C}(\rho_k,\sigma, b_k)$ satisfies the assumption \ref{section:setting_optimal_control}, i.e., in particular, it holds 
\begin{equation}
    \lVert \mathbb{C}(\rho_k,\sigma, b_k)(t) - \mathbb{C}(\rho_k, \sigma, b_k)(s) \rVert_{L^\infty(\Omega, \mathcal{L}(\mathcal{M}_s))} \leq C \lVert b_k(t) - b_k(s) \rVert_{C^0(\Omega)}
\end{equation}
for a constant $C$ that does not depend on the data $\rho_k\in P$ and $b_k\in W_{\rho_k}$ and $t\in I$. Denote by $u_k$ the unique solution of
\begin{equation*}
    \iint \mathbb{C}(\rho_k,\sigma,b_k)\varepsilon(u_k):\varepsilon(\cdot)\mathrm dx \mathrm dt = \int_I \langle f_k,\cdot \rangle_{H^1_D(\Omega)}\mathrm dt \quad \text{in }L^2(I,H^1_D(\Omega))^*.
\end{equation*}
Then, $(u_k)$ lies in $C^0(I,H^1_D(\Omega))$ and is equi-continuous in this space.
\end{lemma}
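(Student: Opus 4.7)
The plan is to mimic the continuity argument at the end of the proof of Lemma~\ref{lemma:bound_for_u_k}, but to track the constants carefully so that they are seen to be uniform in $k$. All the work is already hidden in two facts: the ellipticity constant $c_{\mathbb{C}}$ and Korn's constant $C_{\text{Korn}}$ do not depend on $k$, and the hypothesis of the lemma provides a modulus of continuity in time for $\mathbb{C}(\rho_k,\sigma,b_k)$ that is controlled by the modulus of continuity of $b_k$ with a $k$-independent prefactor.

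First I would apply the previous lemma entry-wise: since $(f_k)$ is bounded in $C^0(I,H^1_D(\Omega)^*)$, Lemma~\ref{lemma:bound_for_u_k} yields each $u_k \in C^0(I,H^1_D(\Omega))$ together with the uniform estimate
\begin{equation*}
    M \coloneqq \sup_{k} \lVert u_k \rVert_{C^0(I,H^1(\Omega))} \leq \left( c_{\mathbb{C}} C_{\text{Korn}} \right)^{-1} \sup_k \lVert f_k \rVert_{C^0(I,H^1_D(\Omega)^*)} < \infty.
\end{equation*}

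Next, for the equi-continuity, write $\mathcal{T}^k_t v \coloneqq \int_\Omega \mathbb{C}(\rho_k,\sigma,b_k)(t)\varepsilon(v):\varepsilon(\cdot)\mathrm dx$ and use, as in Lemma~\ref{lemma:bound_for_u_k}, the identity
\begin{equation*}
    f_k(t) - f_k(s) = \mathcal{T}^k_t\bigl( u_k(t)-u_k(s) \bigr) + \bigl( \mathcal{T}^k_t - \mathcal{T}^k_s \bigr)u_k(s).
\end{equation*}
Coercivity (uniform in $k$) of $\mathcal{T}^k_t$ combined with Korn's inequality then gives
\begin{equation*}
    \lVert u_k(t) - u_k(s) \rVert_{H^1_D(\Omega)} \leq \frac{1}{c_{\mathbb{C}} C_{\text{Korn}}}\Bigl[ \lVert f_k(t) - f_k(s) \rVert_{H^1_D(\Omega)^*} + \lVert (\mathcal{T}^k_t - \mathcal{T}^k_s)u_k(s) \rVert_{H^1_D(\Omega)^*} \Bigr].
\end{equation*}
The second term is bounded by
\begin{equation*}
    \lVert \mathbb{C}(\rho_k,\sigma,b_k)(t) - \mathbb{C}(\rho_k,\sigma,b_k)(s) \rVert_{L^\infty(\Omega,\mathcal{L}(\mathcal{M}_s))}\cdot \lVert u_k(s) \rVert_{H^1_D(\Omega)} \leq C \lVert b_k(t) - b_k(s) \rVert_{C^0(\Omega)} \cdot M
\end{equation*}
by the assumption on $\mathbb{C}$ and the uniform bound $M$. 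Since $(b_k)$ is equi-continuous in $C^0(I,C^0(\Omega))$ and $(f_k)$ is equi-continuous in $C^0(I,H^1_D(\Omega)^*)$, both contributions to the right-hand side admit a modulus of continuity independent of $k$, proving the claim.

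The argument is really just bookkeeping once Lemma~\ref{lemma:bound_for_u_k} is in place. The only point that requires care, and the one that would become an obstacle were any of the standing assumptions weakened, is that every constant entering the estimate, namely the ellipticity constant of $\mathbb{C}$, Korn's constant, the Lipschitz constant of $b\mapsto \mathbb{C}(\cdot,\sigma,b)$, and the uniform bound on $\lVert u_k \rVert_{C^0(I,H^1(\Omega))}$, is independent of $k$. This uniformity is ensured by the hypotheses \eqref{equation:assumption_on_C_ellipticity} and the uniform Lipschitz assumption on $\mathbb{C}(\cdot,\sigma,\rho)$ recorded in Section~\ref{section:setting_optimal_control}, together with the boundedness of $(f_k)$ in $C^0(I,H^1_D(\Omega)^*)$.
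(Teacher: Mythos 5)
Your proof is correct and follows essentially the same route as the paper's: apply Lemma~\ref{lemma:bound_for_u_k} to get membership in $C^0(I,H^1_D(\Omega))$ and a $k$-uniform bound on $\lVert u_k\rVert_{C^0(I,H^1(\Omega))}$, then rerun the time-continuity estimate from that lemma with the Lipschitz hypothesis on $\mathbb{C}$ in $b$ and note that every constant is independent of $k$. Your version merely makes the uniformity bookkeeping more explicit than the paper does.
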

\begin{proof}
    We are in situation of Lemma \ref{lemma:bound_for_u_k}, hence we know that $u_k$ is a member of the space $C^0(I,H^1_D(\Omega))$ and we need only to establish the equi-continuity. To this end, repeating the equations in Lemma \ref{lemma:bound_for_u_k} for $u_k$ instead of $u$ we arrive at
    \begin{align*}
        \lVert u_k(t) - u_k(s) \rVert_{H^1(\Omega)} 
        &\leq
        \frac{1}{\lfloor \mathbb{C}_k \rfloor C_{\text{Korn}}}\left[  \lVert f_k(t) - f_k(s) \rVert_{H^1(\Omega)^*} + \lVert \mathbb{C}_k(t) - \mathbb{C}_k(s) \rVert_{L^\infty(\Omega,\mathcal{L}(\mathcal{M}_s))} \lVert u_k(s) \rVert_{H^1(\Omega)} \right]
        \\&\leq
        \frac{1}{\lfloor \mathbb{C}_k \rfloor C_{\text{Korn}}}\left[  \lVert f_k(t) - f_k(s) \rVert_{H^1(\Omega)^*} + C\lVert b_k(t) - b_k(s) \rVert_{C^0(\Omega)} \right]
    \end{align*}
    as $\lVert u_k(t) \rVert_{H^1(\Omega)}$ is bounded uniformly in $k\in\mathbb{N}$ and $s\in I$ by Lemma \ref{lemma:bound_for_u_k} through the boundedness we assumed for $(f_k)$. Then, we infer the equi-continuity of $(f_k)$ and $(b_k)$ to derive it for $(u_k)$.
\end{proof}
The following lemma summarizes the main result of \cite{haller2019higher}. We restrict ourselves to the generality necessary needed for our application, which however, is not the most general situation. We refer the reader to \cite{haller2019higher} for a relaxation concerning boundary regularity, regularity of coefficients and the differential operator. 
\begin{lemma}[Higher Regularity for Elliptic Systems]\label{lemma:higher_regularity_elliptic_equation}
    Let $\mathbb{C} \in L^\infty(\Omega, \mathcal{L}(\mathcal{M}_s))$ be uniformly elliptic, i.e., there exists $\lfloor \mathbb{C} \rfloor > 0$ such that
    \begin{equation*}
        \mathbb{C}M:M \geq \lfloor \mathbb{C} \rfloor |M|^2, \quad \text{for all }M\in\mathcal{M}_s.
    \end{equation*}
    Assume that $\mathbb{C}_{ijkl}\in C^\alpha(\Omega)$ for a fixed but arbitrary small $\alpha >0$. Then, there exists $\theta = \theta(\alpha) > 0$ such that for every $f\in H^{1-\theta}(\Omega)^*$ the solution $u\in H^1_D(\Omega)$ to 
    \begin{equation*}
        \int_\Omega \mathbb{C}\varepsilon(u):\varepsilon(\cdot)\mathrm dx = f \quad \text{in }H^1_D(\Omega)^*
    \end{equation*}
    is in fact a member of $H^{1+\theta}(\Omega)$ and we can estimate
    \begin{equation*}
        \lVert u \rVert_{H^{1+\theta}(\Omega)} \leq C\lVert \mathbb{C} \rVert_{C^\alpha(\Omega)}\lVert f \rVert_{H^{1 - \theta}_D(\Omega)^*},
    \end{equation*}
    where $C$ does not depend on the concrete form of $\mathbb C$.
\end{lemma}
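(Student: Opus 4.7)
The plan is to reduce the statement to a direct application of the main theorem of \cite{haller2019higher}, so the work is essentially to verify that our hypotheses match the abstract hypotheses there, and to extract the explicit form of the constant.

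First I would set up the functional framework: by Lax-Milgram applied to the bilinear form $a(u,v) = \int_\Omega \mathbb{C}\varepsilon(u):\varepsilon(v)\,\mathrm dx$ on $H^1_D(\Omega)$, uniform ellipticity of $\mathbb{C}$ together with Korn's inequality on $H^1_D(\Omega)$ (which is available because the Dirichlet part has positive measure by the standing assumption in Section~\ref{section:setting_optimal_control}) yields a unique $u \in H^1_D(\Omega)$ and a bound $\lVert u \rVert_{H^1_D(\Omega)} \le C(\lfloor \mathbb{C}\rfloor, C_{\mathrm{Korn}})\lVert f \rVert_{H^1_D(\Omega)^*}$. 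This provides the base regularity that will be bootstrapped.

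Next I would check the hypotheses of \cite{haller2019higher}: (a) the domain pair $\Omega \cup \Gamma_N^e$ is Gröger regular by assumption in Section~\ref{section:setting_optimal_control}; (b) the Dirichlet part $\Gamma_D^e$ has positive surface measure; (c) the coefficient tensor is uniformly elliptic on symmetric matrices and has $C^\alpha(\Omega)$ entries; (d) the differential operator is of the symmetric divergence form $\mathrm{div}(\mathbb{C}\varepsilon(\cdot))$ treated in that paper. Under these hypotheses, the cited result produces an exponent $\theta=\theta(\alpha) > 0$, depending on $\alpha$, $\lfloor\mathbb{C}\rfloor$, the Gröger constants of the domain and $C_{\mathrm{Korn}}$, but \emph{not} on the specific coefficient function beyond its $C^\alpha$ norm, such that the solution operator extends to a continuous isomorphism
\begin{equation*}
    H^{1-\theta}(\Omega)^* \cap H^1_D(\Omega)^* \;\longrightarrow\; H^{1+\theta}(\Omega)\cap H^1_D(\Omega),
\end{equation*}
yielding the desired inclusion and the quantitative bound.

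The key conceptual content, which I would not reprove, is the underlying localization-and-perturbation argument of \cite{haller2019higher}: one localizes to a Gröger chart, freezes the coefficients at the point to reduce to a constant-coefficient elastic system on a model domain (half-cube with mixed boundary conditions), uses the classical $H^{1+\theta}$ regularity there, and then absorbs the perturbation arising from Hölder variations of the coefficients by choosing $\theta$ small; a covering argument then globalizes. The main obstacle that this hides is precisely the interaction between mixed Dirichlet-Neumann conditions and the coefficient regularity along the interface $\overline{\Gamma_D^e}\cap\overline{\Gamma_N^e}$, but Gröger regularity was introduced to tame exactly this difficulty. Finally, to obtain the explicit dependence $\lVert u \rVert_{H^{1+\theta}(\Omega)} \le C\lVert \mathbb{C} \rVert_{C^\alpha(\Omega)}\lVert f \rVert_{H^{1-\theta}_D(\Omega)^*}$ with $C$ independent of $\mathbb{C}$, I would inspect the proof in \cite{haller2019higher} to see that the operator-norm estimate depends linearly on the $C^\alpha$ seminorm of the coefficients (this enters only through the frozen-coefficient perturbation step), and absorb the remaining dependence on $\lfloor\mathbb{C}\rfloor$ into $C$. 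Upon our application in Section~\ref{section:main_results}, this is the estimate that, combined with the uniform $C^\alpha$ bound on $\mathbb{C}(\rho_k,\sigma,b_k)$ coming from \eqref{equation:holder_regulariy_of_coefficients}, upgrades $u_k \in H^1_D(\Omega)$ to $H^{1+\theta}(\Omega)$ uniformly in $k$, providing the compactness needed for assumption (A2) of Proposition~\ref{proposition:proof_under_assumptions}.
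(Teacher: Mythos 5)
Your proposal is correct and follows essentially the same route as the paper, whose proof consists precisely of invoking Theorem 1 and Lemma 1 of \cite{haller2019higher} after noting that the setting (Gr\"oger regular $\Omega\cup\Gamma_N^e$, $|\Gamma_D^e|\neq 0$, uniform ellipticity, $C^\alpha$ coefficients) places the problem within the scope of that result. Your additional sketch of the localization--perturbation mechanism inside the cited theorem and of the linear dependence of the bound on $\lVert \mathbb{C}\rVert_{C^\alpha(\Omega)}$ is consistent with, though not required by, the paper's one-line proof.
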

\begin{proof}
    This follows from Theorem 1 and Lemma 1 in \cite{haller2019higher}. 
\end{proof}
The last result we need to establish the relative compactness of $(u_k)$ in $C^0(I,H^1_D(\Omega))$ is -- little surprisingly -- a vector valued version of the Arzel\`a-Ascoli Theorem which we recall here for convenience.
\begin{theorem}[Characterization of Relative Compactness in $C^0(K,X)$ Spaces]\label{theorem:arzela_ascoli_vector_valued}
    Let $X$ be a Banach space and $K$ a compact metric space. Then a set $\mathcal{F}\subset C^0(K,X)$ is relatively compact if and only if the following two conditions hold:
    \begin{itemize}
        \item [(i)] The set $\mathcal{F}$ is equi-continuous, that is, for all $t\in K$ and all $\varepsilon >0$ there exists a neighborhood $U(t)\subset K$ such that 
        \begin{equation*}
            \sup_{u\in \mathcal{F}}\lVert u(t) - u(s) \rVert_{X} \leq \varepsilon\quad\text{for all }s\in U(t).
        \end{equation*}
        \item[(ii)] For all $t\in K$ the set
        \begin{equation*}
            \{ u(t)\mid u\in \mathcal{F} \} \subset X
        \end{equation*}
        is relatively compact.
    \end{itemize}
\end{theorem}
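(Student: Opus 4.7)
The plan is to prove the two implications separately. Since $X$ is a Banach space, $C^0(K,X)$ equipped with the sup norm is complete, so in both directions it suffices to work with total boundedness rather than sequential compactness directly.

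For necessity, assume $\mathcal{F}$ is relatively compact in $C^0(K,X)$. Condition (ii) follows because for each fixed $t \in K$ the evaluation map $\mathrm{ev}_t : C^0(K,X) \to X$, $u\mapsto u(t)$, is 1-Lipschitz in the sup norm, and continuous images of relatively compact sets are relatively compact. Condition (i) comes from total boundedness: given $\varepsilon > 0$, cover $\mathcal{F}$ by finitely many sup-norm balls $B(u_j,\varepsilon/3)$ with $u_j \in \mathcal{F}$. Each $u_j$ is uniformly continuous on the compact metric space $K$, so given $t \in K$ one can pick a common neighborhood $U(t)$ on which $\lVert u_j(t) - u_j(s)\rVert_X < \varepsilon/3$ holds simultaneously for $j=1,\dots,n$. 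A three-term triangle inequality, using the sup-norm approximation, then gives $\lVert u(t) - u(s)\rVert_X < \varepsilon$ for every $u \in \mathcal{F}$ and every $s \in U(t)$.

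For sufficiency, assume (i) and (ii); I will show $\mathcal{F}$ is totally bounded. Fix $\varepsilon > 0$. Using (i), for each $t \in K$ choose an open neighborhood $U(t)$ such that $\lVert u(t) - u(s)\rVert_X < \varepsilon/4$ for all $u \in \mathcal{F}$ and $s \in U(t)$, and extract a finite subcover $\{U(t_i)\}_{i=1}^n$ by compactness of $K$. By (ii) and the finite union, the set $A := \bigcup_{i=1}^n \{u(t_i) : u \in \mathcal{F}\}$ is relatively compact in $X$; pick a finite $\varepsilon/4$-net $\{y_1, \dots, y_m\} \subset X$ for it. For each map $\alpha : \{1,\dots,n\} \to \{1,\dots,m\}$, set
\begin{equation*}
    \mathcal{F}_\alpha := \{ u \in \mathcal{F} : \lVert u(t_i) - y_{\alpha(i)}\rVert_X < \varepsilon/4 \text{ for } i = 1,\dots,n \}.
\end{equation*}
There are only finitely many such $\alpha$, and the $\mathcal{F}_\alpha$ cover $\mathcal{F}$. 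For $u, v \in \mathcal{F}_\alpha$ and arbitrary $s \in K$, select $i$ with $s \in U(t_i)$; four applications of the triangle inequality (equi-continuity at $u$, the bound at $u(t_i)$, the bound at $v(t_i)$, equi-continuity at $v$) yield $\lVert u(s) - v(s)\rVert_X < \varepsilon$, so $\mathrm{diam}(\mathcal{F}_\alpha) \le \varepsilon$ in sup norm. Choosing one representative from each nonempty $\mathcal{F}_\alpha$ produces a finite $\varepsilon$-net for $\mathcal{F}$.

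The main obstacle is the sufficiency half, where one has to combine the two finite covers, one of $K$ arising from equi-continuity, the other of the pointwise values arising from (ii), into a single finite partition whose pieces are sup-norm-small. The combinatorial parametrization by maps $\alpha$ is the standard way to align these two finite structures, and the four-term triangle inequality is what forces the choice $\varepsilon/4$ in both places.
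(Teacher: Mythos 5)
Your proof is correct: both directions are the standard total-boundedness argument for the vector-valued Arzel\`a--Ascoli theorem, and the reduction to total boundedness is justified since $C^0(K,X)$ is complete for $X$ Banach and $K$ compact. Note that the paper states this theorem without proof, recalling it as a classical fact, so there is no argument in the text to compare against; your write-up fills that gap with the usual proof and contains no errors (the combinatorial indexing by maps $\alpha$ and the four-fold triangle inequality in the sufficiency direction are exactly the standard device).
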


The focus of the next lemma lies on the a priori estimates for linear parabolic equations. 
\begin{lemma}[A Priori Estimate for Parabolic Evolution Equations]\label{lemma:bound_for_a_k}
    Let $(i,X,H)$ be a Gelfand triple, $M:X\to X^*$ a linear coercive operator with coercivity constant $\lfloor M \rfloor$, i.e., it holds 
    \begin{equation*}
        \langle M a, a\rangle_X \geq \lfloor M \rfloor \lVert a \rVert_X^2, \quad \text{for all }a\in X.
    \end{equation*}
    Let $I = [0,T]$ denote a time interval and $f\in L^2(I,X^*)$ a fixed right-hand side. Then there exists a unique solution $a\in H^1(I,X,X^*)$ to 
    \begin{equation*}
        \int_I \langle d_ta, \cdot \rangle \mathrm dt + \int_I\langle Ma, \cdot \rangle_X\mathrm dt = \int_I \langle f,\cdot \rangle_X \mathrm dt, \quad \text{in } L^2(I,X)^*
    \end{equation*}
    Furthermore, the norm of the solution $a$ can be estimated by
    \begin{equation}\label{equation:a_priori_estimate_evolution_problems}
        \lVert a \rVert_{H^1(I,X,X^*)} \leq C\left( \lVert M \rVert_{\mathcal{L}(X,X^*)}, \lfloor M \rfloor^{-1}  \right)\cdot \left( \lVert a(0) \rVert_{H} + \lVert f \rVert_{L^2(I,X^*)}\right),
    \end{equation}
    with $C$ being monotonously increasing in  $\lVert M \rVert_{\mathcal{L}(X,X^*)}$ and $\lfloor M \rfloor^{-1}$.
\end{lemma}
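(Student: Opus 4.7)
The result is a textbook a priori bound for abstract linear parabolic equations on a Gelfand triple; the strategy is the classical energy method combined with the chain-rule identity valid for $H^1(I,X,X^\ast)$ functions. Existence and uniqueness I would cite from the standard Lions theory (see, e.g., Boyer--Fabrie or Showalter), since the coercivity of $M$ and the right-hand side regularity $f \in L^2(I,X^\ast)$ are precisely the hypotheses needed there. The substantive part of the lemma is thus the explicit quantitative bound \eqref{equation:a_priori_estimate_evolution_problems}.

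For the bound I would first test the equation with $a$ itself. Using the chain rule
\begin{equation*}
    \langle d_t a(t), a(t) \rangle_X = \tfrac{1}{2}\tfrac{d}{dt}\lVert a(t) \rVert_H^2
\end{equation*}
available in $H^1(I,X,X^\ast)$, and exploiting coercivity of $M$, I obtain the pointwise differential inequality
\begin{equation*}
    \tfrac{1}{2}\tfrac{d}{dt}\lVert a(t)\rVert_H^2 + \lfloor M \rfloor \lVert a(t) \rVert_X^2 \leq \lVert f(t) \rVert_{X^\ast} \lVert a(t) \rVert_X.
\end{equation*}
Young's inequality with weight $\lfloor M \rfloor$ absorbs the right-hand side so that, after integrating over $[0,t]$ and taking a supremum, I arrive at
\begin{equation*}
    \sup_{t\in I}\lVert a(t) \rVert_H^2 + \lfloor M \rfloor \lVert a \rVert_{L^2(I,X)}^2 \leq \lVert a(0) \rVert_H^2 + \tfrac{1}{\lfloor M \rfloor}\lVert f \rVert_{L^2(I,X^\ast)}^2.
\end{equation*}
This already yields the $L^2(I,X)$ part of the required bound, with a constant depending monotonously on $\lfloor M \rfloor^{-1}$.

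For the time-derivative part, I read off the equation in $X^\ast$: $d_t a = f - M a$ almost everywhere in $I$. Hence
\begin{equation*}
    \lVert d_t a \rVert_{L^2(I,X^\ast)} \leq \lVert f \rVert_{L^2(I,X^\ast)} + \lVert M \rVert_{\mathcal{L}(X,X^\ast)} \lVert a \rVert_{L^2(I,X)},
\end{equation*}
and substituting the previous estimate produces the claimed bound with a constant monotonously increasing in both $\lVert M \rVert_{\mathcal{L}(X,X^\ast)}$ and $\lfloor M \rfloor^{-1}$. I do not expect any real obstacle here; the only subtle point is ensuring the chain-rule identity at the beginning, which is a standard fact for the Gelfand triple setting but must be invoked explicitly since $a(t)$ only lies in $X$ a.e.\ and continuity in $H$ is a consequence of the space $H^1(I,X,X^\ast)$ rather than an assumption.
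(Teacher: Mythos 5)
Your proposal is correct and follows essentially the same route as the paper's proof: existence and uniqueness are delegated to the classical Lions maximal regularity theorem, the energy estimate is obtained by testing with $a$, using the Gelfand-triple integration-by-parts (chain-rule) identity, coercivity, and Young's inequality weighted by $\lfloor M\rfloor$, and the bound on $d_t a$ is read off from the equation itself. No differences of substance.
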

\begin{proof}
    We establish only the estimate \eqref{equation:a_priori_estimate_evolution_problems}, the existence of a solution is the well known maximal regularity result of J.\ L.\ Lions, see for instance \cite[Part II, Section 6]{ern2013theory}. To derive the estimate, we note that by the natural isometry $L^2(I,X^*) = L^2(I,X)^*$ the function $a$ satisfies a pointwise almost-everywhere equation in $X^*$, namely
    \begin{equation*}
        d_ta(s) + M(a(s)) = f(a(s))
    \end{equation*}
    which, at time $s\in I$, we can test with $a(s) \in X$ and integrate from $0$ to $t$. Then, we apply the partial integration formula for Gelfand triples and estimate using the coercivity of $M$ and Young's inequality
    \begin{align*}
        \frac12 \lVert a(t) \rVert_H^2 + \lfloor M \rfloor \int_0^t\lVert a(s) \rVert^2_X\mathrm ds 
        &\leq 
        \frac12 \lVert a(0) \rVert_H^2 + \lVert f \rVert_{L^2(I,X^*)}\lVert a \rVert_{L^2(I,X)}
        \\&\leq
        \frac12 \lVert a(0) \rVert_H^2 + \frac{1}{2\lfloor M \rfloor}\lVert f \rVert^2_{L^2([0,t],X^*)} + \frac{\lfloor M \rfloor}{2} \lVert a \rVert^2_{L^2([0,t],X)},
    \end{align*}
    which leads to
    \begin{equation*}
        \frac12 \lVert a(t) \rVert_H^2 + \frac{\lfloor M \rfloor}{2} \int_0^t\lVert a(s) \rVert^2_X\mathrm ds
        \leq 
        \frac12 \lVert a(0) \rVert_H^2 + \frac{1}{2\lfloor M \rfloor}\lVert f \rVert^2_{L^2([0,t],X^*)}.
    \end{equation*}
    We get by estimating the terms of the left-hand side separately and taking the supremum over $t\in I$ both
    \begin{gather*}
        \lVert a \rVert_{C^0(I,L^2(\Omega))}^2 \leq  \lVert a(0) \rVert_H^2 + \frac{1}{\lfloor M \rfloor}\lVert f \rVert^2_{L^2(I,X^*)}
        \quad \text{and} \quad 
        \lVert a \rVert_{L^2(I,X)} \leq \lVert a(0) \rVert_H^2 + \frac{1}{\lfloor M \rfloor^2}\lVert f \rVert^2_{L^2(I,X^*)}.
    \end{gather*}
    To estimate the $L^2(I,X)^*$ norm of $d_ta$, we use that $a$ is the solution of the parabolic equation to estimate
    \begin{align*}
        \lVert d_ta \rVert_{L^2(I,X)^*} 
        &=
        \sup_{\lVert \varphi \rVert_{L^2(I,X)\leq 1}} \int_I \langle d_ta, \varphi  \rangle_X\mathrm dt
        \\ & \leq
        \sup_{\lVert \varphi \rVert_{L^2(I,X)\leq 1}} \left[ \int_I |\langle Ma,\varphi\rangle_X|\mathrm dt + \int_I|\langle f,\varphi \rangle_X| \mathrm dt \right]
        \\ & \leq 
        \lVert M \rVert_{\mathcal{L(X,X^*)}}\lVert a \rVert_{L^2(I,X)} + \lVert f \rVert_{L^2(I,X^*)}.
    \end{align*}
    If we infer the previous estimates for $a$ in $L^2(I,X)$ norm, we can bound $d_ta$ in $L^2(I,X)^*$ norm. Combining the considerations for $a$ and $d_ta$ lets us bound the $H^1(I,X,X^*)$ as desired.
\end{proof}
\begin{lemma}[$L^p(I,C^\alpha(\Omega))$ Bound for $(a_k)$]\label{lemma:the_bound_for_a} 
    Assume $\Omega\subset \mathbb{R}^d$ with $d=1,2,3$ and     $\partial\Omega = \Gamma_N\cup\Gamma_D$ where $\Omega \cup \Gamma_N$ is Gr\"oger regular. Let $D\in L^\infty(\Omega,\mathcal{M}_s)$ be uniformly elliptic with ellipticity constant $\lfloor D \rfloor >0$, $k > 0$, $f\in     L^p(I,L^2(\Omega))$ for a fixed $p > 2$ and $a_0 \in L^\infty(\Omega)$ some essentially bounded initial condition. Then there exists $\alpha = \alpha(p)\in(0,1)$ independent of $D$ and $f$ such that the solution $a \in     H^1(I,H^1_D(\Omega),H^1_D(\Omega)^*)$ to 
    \begin{align*}
        \int_I \langle d_ta,\cdot \rangle_{H^1_D(\Omega)}\mathrm dt + \iint D\nabla a\nabla \cdot + k a\cdot \mathrm dx \mathrm dt &= \iint f\cdot \mathrm dx\mathrm dt
        \\
        a(0) = a_0
    \end{align*}
    is a member of $L^p(I,C^\alpha(\Omega))$ and satisfies the estimate
    \begin{equation*}
        \lVert a \rVert_{L^p(I,C^\alpha(\Omega))} \leq C\left(p,\lfloor D \rfloor, \lVert D \rVert_{L^\infty(\Omega,\mathcal{M}_s)}\right) \lVert f \rVert_{L^p(I,L^2(\Omega))}.
    \end{equation*}
\end{lemma}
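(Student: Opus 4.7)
My plan is to combine three ingredients: (a) Gr\"oger's isomorphism theorem for elliptic operators with mixed boundary conditions on Gr\"oger regular domains, which yields $W^{1,q}$ solvability for some exponent $q>d$; (b) the abstract theory of maximal parabolic regularity, which transports the elliptic estimate into an $L^p$-in-time estimate for the evolution equation; (c) the Morrey-type Sobolev embedding $W^{1,q}(\Omega)\hookrightarrow C^\alpha(\Omega)$ for $q>d$ and $\alpha=1-d/q$.

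\textbf{Step 1 (elliptic higher integrability).} By Gr\"oger's theorem applied to the Gr\"oger regular pair $\Omega\cup\Gamma_N$, together with the uniform ellipticity and $L^\infty$-boundedness of $D$, I obtain an exponent $q=q(\Omega,\Gamma_N,\lfloor D\rfloor,\lVert D\rVert_{L^\infty})>d$ and a topological isomorphism
\begin{equation*}
    -\operatorname{div}(D\nabla)+k\colon W^{1,q}_{D}(\Omega)\longrightarrow W^{-1,q}_{D}(\Omega),
\end{equation*}
with operator norm controlled by the same data. Crucially, $q>d$ is forced by the two- or three-dimensional geometry and the Gr\"oger regularity assumption.

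\textbf{Step 2 (maximal parabolic regularity).} I would then realize $A=-\operatorname{div}(D\nabla)+k$ as a closed sectorial operator on a base space that contains $L^2(\Omega)$ but whose domain embeds into $W^{1,q}_D(\Omega)$, and invoke maximal $L^p$-regularity: for any $p>2$, every $f\in L^p(I,L^2(\Omega))$ produces a solution with $d_t a,\,Aa\in L^p(I,L^2(\Omega))$ and, by composition with the elliptic isomorphism of Step~1, with $a\in L^p(I,W^{1,q}_D(\Omega))$ satisfying a norm estimate of the claimed type. The essentially bounded initial datum $a_0$ enters via the standard trace/semigroup correction and contributes to the same estimate, since $L^\infty(\Omega)$ embeds into the real interpolation space between $L^2(\Omega)$ and the domain of $A$ required by $L^p$-maximal regularity.

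\textbf{Step 3 (embedding to H\"older spaces).} Having $a\in L^p(I,W^{1,q}(\Omega))$ with $q>d$, I pick $\alpha=1-d/q\in(0,1)$ and apply the standard Morrey embedding $W^{1,q}(\Omega)\hookrightarrow C^\alpha(\Omega)$, which is available on the Lipschitz domain $\Omega$. This upgrades the spatial regularity from $W^{1,q}$ to $C^\alpha$ pointwise in time, and integrating the $p$-th power of the embedding constant yields
\begin{equation*}
    \lVert a\rVert_{L^p(I,C^\alpha(\Omega))}\leq C\,\lVert a\rVert_{L^p(I,W^{1,q}(\Omega))},
\end{equation*}
from which the claimed estimate follows.

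The main obstacle is Step~2: establishing maximal $L^p$-regularity on the Gr\"oger exponent scale with merely $L^\infty$ coefficients and mixed Dirichlet--Neumann boundary conditions is a delicate matter, because classical results require either smooth domains or smooth coefficients. The right framework is the bounded $\mathcal{H}^\infty$-calculus / $\mathcal{R}$-sectoriality machinery developed for such non-smooth settings, and one must verify that the Gr\"oger exponent $q$ lies in the range where this calculus is available so that Dore--Venni / Weis-type theorems apply. This is precisely the technical content carried out in the companion work \cite{dondl2021regularity}; in the proof here I would invoke it as a black box and restrict attention to showing that its hypotheses are met by the present $D$, $\Omega$ and boundary partition.
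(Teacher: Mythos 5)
First, note that the paper does not actually prove this lemma in situ: the stated proof is a deferral to the companion work \cite{dondl2021regularity}, and the accompanying remark only sketches the strategy there (split into the case $f=0$, $a(0)=a_0$ and the case $f$ arbitrary, $a(0)=0$, then superpose). So any comparison is against that sketch. Your overall architecture (elliptic regularity for the mixed-boundary operator, transported in time by maximal parabolic regularity, then an embedding into H\"older spaces) is in the right spirit, but two of your steps contain genuine errors.

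The first is the claim in Step~1 that Gr\"oger regularity forces an isomorphism exponent $q>d$. Gr\"oger's theorem for merely $L^\infty$, uniformly elliptic coefficients with mixed boundary conditions yields $W^{1,q}_D\to W^{-1,q}_D$ isomorphy only for some $q>2$, and in general $q$ can be arbitrarily close to $2$. In $d=3$ one therefore cannot conclude $q>3$, and the Morrey embedding $W^{1,q}(\Omega)\hookrightarrow C^\alpha(\Omega)$ of Step~3 is unavailable. The spatial H\"older continuity in this nonsmooth mixed-boundary setting has to come from De~Giorgi--Nash--Moser-type results (this is exactly what \cite{haller2009holder} provides and why the paper assumes Gr\"oger regularity), not from $W^{1,q}$ plus Morrey. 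The second is your treatment of the initial datum: you assert that $L^\infty(\Omega)$ embeds into the real interpolation (trace) space required for maximal $L^p$-regularity. The paper's remark states precisely the opposite --- the relevant trace space is essentially $H^1_D(\Omega)$ (a space incorporating the Dirichlet condition), and the ``jump'' initial condition $a_0\in L^\infty(\Omega)$, which is incompatible with the boundary data, does \emph{not} lie in it. This incompatibility is identified in the paper as the central obstruction that makes the abstract theory of \cite{amann1995linear} combined with \cite{haller2009holder} ``only almost suffice,'' and it is the reason the companion paper resorts to the superposition argument, treating the rough initial datum with $f=0$ separately (exploiting parabolic smoothing away from $t=0$). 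As written, your ``standard trace/semigroup correction'' papers over exactly the step that constitutes the technical content of the result.
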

\begin{proof}
    The proof of this Lemma exceeds the scope of this manuscript and is the main result of \cite{dondl2021regularity}.
\end{proof}
\begin{remark}
    We comment on some of the aspects leading to the complexity of the proof of lemma \ref{lemma:the_bound_for_a}.
    \begin{itemize}
        \item [(i)] The mixed boundary conditions, rough coefficients and the jump initial condition prevents the standard theory from being applicable. If it wasn't for this roughness, an $L^2(I,H^2(\Omega))$ result could be derived by standard theory, see for instance \cite{evans1998partial}.
        \item[(ii)] Even invoking the theory of abstract parabolic equations as described in \cite{amann1995linear} does only almost suffice. In fact, combining the results in \cite{amann1995linear} with \cite{haller2009holder} yields $L^2(I,C^\alpha(\Omega))$ regularity only if $a_0$ lies in a suitable trace space for initial conditions. The trace space in this case is $H^1_D(\Omega)$ and not $L^\infty(\Omega)$.
        \item[(iii)] The strategy to prove Lemma \ref{lemma:the_bound_for_a} is therefore to treat the cases $f = 0$, $a(0) = a_0$ and $f = f$, $a(0) = 0$ separately and then to use the superposition principle for linear equations. For details we refer to \cite{dondl2021regularity}.
    \end{itemize}
\end{remark}
We treat now the cell ODE. We need to establish that a solution in $W^{1,2}(I,C^\alpha(\Omega))$ exists and is suitably bounded in the data. We have already access to the fact that a long-time solution in $W^{1,2}(I,C^0(\Omega))$ exists, hence the crucial part is to control the H\"older norm of this solution. This can be done by accessing the solution $c$ through its formulation as a fixed-point and then estimating its H\"older norm if suitable regularity for the data is given.

\begin{lemma}\label{lemma:holder_bound_c_k}
    Let $a_1$ and $a_2$ be functions in $L^4(I,C^\alpha(\Omega))$ with $a_1, a_2 \geq 0$. Assume that $\rho \in C^\alpha(\Omega)$ satisfies $0 < \rho < 1$ and $k_6$ and $k_7$ are positive constants. Then there exists a solution $c \in W^{1,2}(I,C^0(\Omega))$ to the equation
    \begin{equation*}
        d_tc = k_6a_1a_2(1+k_7c)\left( 1 - \frac{c}{1-\rho} \right), \quad c(0) = 0
    \end{equation*}
    with $0\leq c \leq 1$. Furthermore, we can control the $\alpha$-H\"older seminorm of $c$ in the following way
    \begin{equation*}
        \lfloor c(t) \rfloor_\alpha \leq C\left( \lVert a_1 \rVert_{L^2(I,C^\alpha(\Omega))}, \lVert a_2 \rVert_{L^2(I,C^\alpha(\Omega))}, \lVert \rho \rVert_{C^\alpha(\Omega)}  \right),
    \end{equation*}
    with the constant $C$ being monotone in its arguments.
\end{lemma}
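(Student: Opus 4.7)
The plan is to split the claim into two parts: existence with the pointwise bound $0 \leq c \leq 1$, and the Hölder estimate. The existence of $c \in W^{1,2}(I,C^0(\Omega))$ is essentially inherited from the well-posedness theorem of Section~\ref{section:setting_optimal_control}, applied to the subsystem that treats $\rho, a_1, a_2$ as data. The bound $0 \leq c \leq 1-\rho$ follows from a standard comparison argument at the level of the pointwise ODE: $c = 0$ is a sub-solution because $d_t c = k_6 a_1 a_2 \geq 0$ on $\{c=0\}$, and $c = 1-\rho$ is a super-solution since the factor $(1 - c/(1-\rho))$ makes the right-hand side vanish there. In particular $c \leq 1$.

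For the Hölder estimate I would work with the integral (Duhamel / fixed-point) representation of the ODE, which holds pointwise in $x$:
\begin{equation*}
   c(t,x) = \int_0^t k_6\, a_1(s,x)a_2(s,x)\bigl(1 + k_7 c(s,x)\bigr)\Bigl(1 - \frac{c(s,x)}{1 - \rho(x)}\Bigr)\mathrm{d}s.
\end{equation*}
For fixed $t \in I$ and $x,y \in \Omega$, subtract the identities at $x$ and $y$ and split the integrand by a repeated product-rule-for-differences of the form $F_1 F_2 F_3 F_4(x) - F_1 F_2 F_3 F_4(y) = \sum_i (\cdots)(F_i(x) - F_i(y))(\cdots)$, where the four factors are $a_1$, $a_2$, $(1+k_7 c)$, and $(1 - c/(1-\rho))$. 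Using $\|c\|_\infty \leq 1$, $\rho \leq C_P < 1$, and elementary computations (in particular $\lfloor (1-\rho)^{-1} \rfloor_\alpha$ is controlled by $\|\rho\|_{C^\alpha}$ via $c_P, C_P$), each difference factor is bounded by $|x-y|^\alpha$ times one of $\lfloor a_1(s) \rfloor_\alpha$, $\lfloor a_2(s) \rfloor_\alpha$, $\lfloor c(s) \rfloor_\alpha$, or a constant multiple of $\|\rho\|_{C^\alpha}$, while the remaining (non-differenced) factors are bounded in $L^\infty(\Omega)$ uniformly or by $\|a_i(s)\|_{C^0}$.

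Dividing by $|x - y|^\alpha$ and taking the supremum over $x \neq y$, this yields an integral inequality of the shape
\begin{equation*}
   \lfloor c(t) \rfloor_\alpha \leq \int_0^t K_1(s)\,\mathrm{d}s + \int_0^t K_2(s)\lfloor c(s) \rfloor_\alpha\,\mathrm{d}s,
\end{equation*}
where $K_1, K_2$ are polynomial expressions in $\|a_1(s)\|_{C^\alpha}$, $\|a_2(s)\|_{C^\alpha}$ and $\|\rho\|_{C^\alpha}$. Since $a_1, a_2 \in L^2(I,C^\alpha(\Omega))$ (obtained from the $L^4$ hypothesis on a bounded interval), the product $a_1 a_2$ lies in $L^1(I,C^\alpha(\Omega))$ by Cauchy-Schwarz, so both $K_1$ and $K_2$ are in $L^1(I)$ with norms estimated by $\|a_1\|_{L^2(I,C^\alpha)}\|a_2\|_{L^2(I,C^\alpha)}$ and $\|\rho\|_{C^\alpha}$. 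Applying Grönwall's lemma then produces the claimed monotone constant $C$.

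The main obstacle I anticipate is the bookkeeping in the product-rule decomposition, in particular verifying that every factor that is \emph{not} differenced can be bounded by a constant depending only on the admissible data (using $0 \leq c \leq 1$ and $\rho \in [c_P, C_P] \subset (0,1)$), and that the one self-referential term involving $\lfloor c(s) \rfloor_\alpha$ comes with an $L^1(I)$ coefficient so that Grönwall is applicable. The nonlinearity of the equation in $c$ prevents a purely linear estimate, but because $c$ is already known to be uniformly bounded, the nonlinearity only contributes bounded prefactors and the Grönwall step closes the argument cleanly.
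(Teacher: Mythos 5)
Your proposal is correct and follows essentially the same route as the paper's proof: pass to the pointwise integral (fixed-point) representation, decompose the difference of the nonlinear integrand using $0\leq c\leq 1$ and the H\"older control of $(1-\rho)^{-1}$ so that the self-referential term carries an $L^1(I)$ coefficient, and close with Gr\"onwall; the paper merely organizes the bookkeeping by first expanding $(1+c)(1-\theta c)$ into four monomials rather than telescoping over the four factors, and it cites the companion paper for existence and the bound $0\leq c\leq 1$ where you sketch a comparison argument.
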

\begin{proof}
    The existence of a solution in the space $W^{1,2}(I,C^0(\Omega))$ was already established in Theorem 3.2 in \cite{dondl2021efficient}. We are only concerned with the control over the H\"older seminorm. To simplify notation, we prove the statement for an ODE of the form
    \begin{equation}\label{local}
        dtc = m(1 + c)(1 - \theta c), \quad c(0) = 0
    \end{equation}
    with $m \in L^2(I,C^\alpha(\Omega))$ and $\theta \in C^\alpha(\Omega)$ with $0 < \theta^{-1}(x) < 1$, which implies that the solution $c$ to \eqref{local} takes values in the unit interval, i.e., $c(t,x)\in [0,1]$, see Lemma B.5. The existence of $c$ in $W^{1,2}(I,C^0(\Omega))$ solving \eqref{local} implies upon applying integrating that $c(t)$ is given by
    \begin{equation*}
        c(t) = \int_0^t m(s)(1+c(s))(1-\theta c(s))\mathrm ds,
    \end{equation*}
    with the integral being a $C^0(\Omega)$ valued Bochner integral. As point evaluation at $x\in \overline{\Omega}$ is continuous and linear from $C^0(\Omega)$ to $\mathbb{R}$, it also holds
    \begin{equation*}
        c(t,x) = \int_0^t m(s,x)(1+c(s,x))(1-\theta(x) c(s,x))\mathrm ds.
    \end{equation*}
    We use the above formula and the triangle inequality to estimate
    \begin{align*}
        |c(t,x) - c(t,y)| &\leq \underbrace{\int_0^t|m(s,x)-m(s,y)|\mathrm ds}_{\eqqcolon A} 
        +
        \underbrace{\int_0^t |m(s,x)c(s,x) - m(s,y)c(s,y)| \mathrm ds}_{\eqqcolon B} 
        \\&+
        \underbrace{\int_0^t |m(s,y)\theta(y)c(s,y) - m(s,x)\theta(x)c(s,x)| \mathrm ds}_{\eqqcolon C}
        \\&+
        \underbrace{\int_0^t |m(s,y)\theta(y)c(s,y)^2 - m(s,x)\theta(x)c(s,x)^2| \mathrm ds}_{\eqqcolon D}.
    \end{align*}
    For brevity, we set $\tilde m(t,x) = m(t,x)\theta(x)$. Inferring that $c$ takes values in $[0,1]$, we claim that the above estimate leads to
    \begin{equation}\label{local_holder_estimate}
        |c(t,x) - c(t,y)| \leq \int_0^t \left( 2\lfloor m(s) \rfloor_\alpha + \lVert m(s) \rVert_{C^0(\Omega)}\lfloor c(s) \rfloor_\alpha + 3\lVert \Tilde{m}(s) \rVert_{C^0(\Omega)} \lfloor c(s) \rfloor_\alpha + 2\lfloor \Tilde{m}(s) \rfloor_\alpha \right)|x-y|^\alpha\mathrm ds.
    \end{equation}
    Dividing by $|x-y|^\alpha$ and taking the supremum over pairs $(x,y)\in\overline{\Omega}^2$ with $x\neq y$ we get
    \begin{equation*}
        \lfloor c(t) \rfloor_\alpha 
        \leq
        \int_0^t \underbrace{2 \left( \lfloor m(s) \rfloor_\alpha + \lfloor \tilde m(s) \rfloor_\alpha \right)}_{\eqqcolon \alpha(s)} + \underbrace{\left( \lVert m(s) \rVert_{C^0(\Omega)} + 3\lVert \tilde m(s) \rVert_{C^0(\Omega)} \right)}_{\eqqcolon \beta(s)} \lfloor c(s) \rfloor_\alpha \mathrm ds.
    \end{equation*}
    Hence, by Gr\"onwall's lemma we get
    \begin{equation*}
        \lfloor c(t) \rfloor_\alpha \leq \left( 1 + \lVert \beta \rVert_{L^1(I)}\exp\left(\lVert \beta \rVert_{L^1(I)}\right) \right)\lVert \alpha \rVert_{L^1(I)}
    \end{equation*}
    with
    \begin{align*}
        \lVert \alpha \rVert_{L^1(I)} &\leq 2 \lVert m \rVert_{L^1(I,C^\alpha(\Omega))} + 2 \lVert \tilde m \rVert_{L^1(I,C^\alpha(\Omega))}
        \\
        \lVert \beta \rVert_{L^1(I)} &\leq \lVert m \rVert_{L^1(I,C^0(\Omega))} + 3 \lVert \tilde m \rVert_{L^1(I,C^0(\Omega))}.
    \end{align*}
    As for a bounded intervals the $L^2$ norm dominates the $L^1$ norm, we are done, given we still provide the details of the computations that led to \eqref{local_holder_estimate}. To this end, note that we may estimate $(A)$ by 
    \begin{equation*}
        \int_0^t |m(s,x)-m(s,y)|\mathrm ds \leq \int_0^t \lfloor m(s) \rfloor_\alpha|x-y|^\alpha \mathrm ds.
    \end{equation*}
    Using the triangle inequality and the pointwise properties of $c$, we estimate $(B)$ by
    \begin{align*}
        \int_0^t |m(s,x)c(s,x) - m(s,y)c(s,y)| \mathrm ds 
        &\leq
        \int_0^t |m(s,x)|\lfloor c(s) \rfloor_\alpha |x - y|^\alpha \mathrm ds + \int_0^t |c(s,y)|\lfloor m(s)\rfloor_{\alpha}|x-y|^\alpha\mathrm ds
        \\&\leq
        \int_0^t \lVert m(s)\rVert_{C^0(\Omega)} \lfloor c(s) \rfloor_\alpha |x - y|^\alpha \mathrm ds + \int_0^t \lfloor m(s)\rfloor_{\alpha}|x-y|^\alpha\mathrm ds.
    \end{align*}
    Using again the abbreviation $\tilde m = m\theta$ and noting that $\tilde m$ has the same regularity as $m$, we can estimate the term $(C)$ in analogy to term $(B)$ by
    \begin{equation*}
        \int_0^t |m(s,y)\theta(y)c(s,y) - m(s,x)\theta(x)c(s,x)| \mathrm ds
        \leq
        \int_0^t \lVert \tilde m(s) \rVert_{C^0(\Omega)}\lfloor c(s) \rfloor_\alpha |x-y|^\alpha + \int_0^t \lfloor \tilde m(s) \rfloor_\alpha |x-y|^\alpha \mathrm ds.
    \end{equation*}
    To estimate $(D)$ we need to split the term
    \begin{equation*}
        (D) = \underbrace{\int_0^t \tilde |m(s,y)|\,\left| c(s,y)^2 - c(s,x)^2 \right|\mathrm ds}_{\eqqcolon D_1} + \underbrace{\int_0^t |c(s,x)|^2\left| \tilde m(s,y) - \tilde m(s,x) \right| \mathrm ds}_{\eqqcolon D_2}.
    \end{equation*}
    Using $c(s,y)^2 - c(s,x)^2 = c(s,y)(c(s,y)-c(s,x)) + c(s,x)(c(s,y)-c(s,x))$ and $c(s,x)\in[0,1]$, we estimate $(D_1)$
    \begin{align*}
        (D_1) &\leq \int_0^t |\tilde m(s,y)|\, |c(s,y)|\,\lfloor c(s)\rfloor_\alpha |x-y|^\alpha + |\tilde m(s,y)|\,|c(s,x)|\,\lfloor c(s) \rfloor_\alpha|x-y|^\alpha\mathrm ds
        \\&\leq
        \int_0^t 2|\tilde m(s,y)|\,\lfloor c(s) \rfloor_\alpha |x-y|^\alpha \mathrm ds
        \\&\leq
        \int_0^t 2\lVert \tilde m(s)\rVert_{C^0(\Omega)}\,\lfloor c(s) \rfloor_\alpha |x-y|^\alpha \mathrm ds
    \end{align*}
    and for $(D_2)$
    \begin{equation*}
        (D_2) \leq \int_0^t |c(s,x)|^2\lfloor \tilde m(s) \rfloor_\alpha |x-y|^\alpha \mathrm ds 
        \leq
        \int_0^t \lfloor \tilde m(s) \rfloor_\alpha |x-y|^\alpha \mathrm ds.
    \end{equation*}
    Collecting all estimates yields the claim and the proof is complete.
\end{proof}

\begin{lemma}\label{lemma:bound_for_c_k}
    Let $a_1$ and $a_2$ be functions in $L^4(I,C^\alpha(\Omega))$ with $a_1, a_2 \geq 0$. Assume that $\rho \in C^\alpha(\Omega)$ satisfies $0 < \rho < 1$ and $k_6$ and $k_7$ are positive constants. Then there exists a unique solution $c \in W^{1,2}(I,C^\alpha(\Omega))$ to the equation
    \begin{equation*}
        d_tc = k_6a_1a_2(1+k_7c)\left( 1 - \frac{c}{1-\rho} \right), \quad c(0) = 0
    \end{equation*}
    with $0\leq c \leq 1$. Furthermore, we can control the full $\alpha$-H\"older norm of $c$ in the following way
    \begin{equation*}
        \lVert c \rVert_{W^{1,2}(I,C^\alpha(\Omega))} \leq C\left( \lVert a_1 \rVert_{L^2(I,C^\alpha(\Omega))}, \lVert a_2 \rVert_{L^2(I,C^\alpha(\Omega))}, \lVert \rho \rVert_{C^\alpha(\Omega)}  \right),
    \end{equation*}
    with the constant $C$ being monotone in its arguments.
\end{lemma}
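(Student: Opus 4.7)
The existence of a solution $c\in W^{1,2}(I,C^0(\Omega))$ with $0\le c\le 1$, together with the pointwise bound $\lfloor c(t)\rfloor_\alpha\le C(\lVert a_1\rVert_{L^2(I,C^\alpha(\Omega))},\lVert a_2\rVert_{L^2(I,C^\alpha(\Omega))},\lVert\rho\rVert_{C^\alpha(\Omega)})$ on the H\"older seminorm, is already supplied by Lemma~\ref{lemma:holder_bound_c_k}. Uniqueness in this class is a routine Gr\"onwall argument: if $c_1$ and $c_2$ are two solutions, then $d_t(c_1-c_2)$ can be written as $L(t,x)(c_1-c_2)$ with a kernel $L$ that is bounded in $L^2(I,L^\infty(\Omega))$ by means of $a_1,a_2\in L^4(I,C^\alpha(\Omega))\hookrightarrow L^4(I,C^0(\Omega))$, and the vanishing initial condition forces $c_1\equiv c_2$. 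So the remaining task is to upgrade the seminorm estimate to a full $W^{1,2}(I,C^\alpha(\Omega))$ estimate.

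First I would combine the uniform bound $0\le c(t,x)\le 1$ with the H\"older seminorm bound from Lemma~\ref{lemma:holder_bound_c_k} to get
\begin{equation*}
    \lVert c(t)\rVert_{C^\alpha(\Omega)}\le 1 + \lfloor c(t)\rfloor_\alpha \le C\bigl(\lVert a_1\rVert_{L^2(I,C^\alpha(\Omega))},\lVert a_2\rVert_{L^2(I,C^\alpha(\Omega))},\lVert\rho\rVert_{C^\alpha(\Omega)}\bigr)
\end{equation*}
uniformly in $t\in I$, which yields $c\in L^\infty(I,C^\alpha(\Omega))\hookrightarrow L^2(I,C^\alpha(\Omega))$ with the desired quantitative bound. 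This takes care of the $L^2(I,C^\alpha(\Omega))$ part of the $W^{1,2}$ norm.

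Next I would estimate $d_tc$ directly from the ODE
\begin{equation*}
    d_tc = k_6 a_1 a_2 (1+k_7 c)\bigl(1-\tfrac{c}{1-\rho}\bigr).
\end{equation*}
The space $C^\alpha(\Omega)$ is a Banach algebra, so I can bound the $C^\alpha$ norm of the right-hand side factor-by-factor. The factor $(1+k_7c)(1-c/(1-\rho))$ lies in $L^\infty(I,C^\alpha(\Omega))$ with norm controlled by the previous step together with $\lVert (1-\rho)^{-1}\rVert_{C^\alpha(\Omega)}$, which is finite and quantitatively bounded because $\rho\in P$ satisfies $\rho\le C_P<1$ so that $x\mapsto 1/(1-x)$ is smooth on the range of $\rho$. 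The remaining factor $a_1a_2$ is estimated by H\"older's inequality in time,
\begin{equation*}
    \lVert a_1 a_2\rVert_{L^2(I,C^\alpha(\Omega))}\le \lVert a_1\rVert_{L^4(I,C^\alpha(\Omega))}\lVert a_2\rVert_{L^4(I,C^\alpha(\Omega))},
\end{equation*}
again using that $C^\alpha(\Omega)$ is an algebra. Multiplying these bounds gives $d_tc\in L^2(I,C^\alpha(\Omega))$ with a norm controlled monotonically by the data, completing the $W^{1,2}(I,C^\alpha(\Omega))$ estimate.

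The only real obstacle is purely bookkeeping: ensuring that the algebra/H\"older inequalities are applied with the correct constants, and verifying that $1/(1-\rho)\in C^\alpha(\Omega)$ with a controllable norm. The latter is where the uniform upper bound $\rho(x)\le C_P<1$ built into the definition of $P$ enters in an essential way, since it prevents $1-\rho$ from approaching zero and hence keeps $\lVert(1-\rho)^{-1}\rVert_{C^\alpha(\Omega)}$ bounded in terms of $\lVert\rho\rVert_{C^\alpha(\Omega)}$ alone. Modulo this, every other estimate is a direct consequence of Lemma~\ref{lemma:holder_bound_c_k}, Banach algebra properties of $C^\alpha(\Omega)$, and H\"older's inequality in time.
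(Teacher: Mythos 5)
Your argument reaches the same final estimate but by a genuinely different route. The paper does not bootstrap the known $C^0(\Omega)$-valued solution; it re-constructs the solution directly in $W^{1,2}(I_\delta,C^\alpha(\Omega))$ by verifying the Carath\'eodory, boundedness and $L^2$-Lipschitz hypotheses of the abstract existence theorem (Theorem B.2 of \cite{dondl2021efficient}) for $F(t,c)=m(t)(1+c)(1-\theta c)$ in the Banach algebra $C^\alpha(\Omega)$, obtains uniqueness from that theorem, and then runs a continuation argument to extend the solution to all of $I$. Only at the very end does it combine $0\le c\le 1$ with the seminorm estimate of Lemma~\ref{lemma:holder_bound_c_k}, exactly as you do. Your route is shorter because global existence in $C^0(\Omega)$ is already available and you never need the continuation step; your Gr\"onwall uniqueness argument and the factor-by-factor bound on $d_tc$ (algebra property of $C^\alpha(\Omega)$ plus H\"older in time turning $L^4\times L^4$ into $L^2$) are sound, and you correctly locate where the uniform bound $\sup_\Omega\rho<1$ enters via $\lVert(1-\rho)^{-1}\rVert_{C^\alpha(\Omega)}$.

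The one step you should not wave away is the passage from ``the $C^0(\Omega)$-valued solution satisfies $\sup_{t}\lVert c(t)\rVert_{C^\alpha(\Omega)}<\infty$ and the right-hand side has finite $L^2(I,C^\alpha(\Omega))$ norm'' to ``$c\in W^{1,2}(I,C^\alpha(\Omega))$''. Membership in a Bochner space requires strong measurability, and $C^\alpha(\Omega)$ is not separable, so a pointwise-in-time norm bound on a $C^0(\Omega)$-valued function does not by itself yield measurability of $t\mapsto c(t)$ as a map into $C^\alpha(\Omega)$; the same issue recurs for $t\mapsto F(t,c(t))$. The paper's construction sidesteps this because the fixed-point iteration lives in $W^{1,2}(I_\delta,C^\alpha(\Omega))$ from the outset. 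The cleanest patch for your version is to invoke the abstract existence theorem once to produce a genuinely $C^\alpha(\Omega)$-valued solution and identify it with the $C^0(\Omega)$-valued one via your uniqueness argument, after which your quantitative estimates apply verbatim.
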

\begin{proof}
    We use again the notation
    \begin{equation*}
        d_tc = m(1+c)(1-\theta c), \quad c(0)=0,
    \end{equation*}
    where $m\in L^2(I,C^\alpha(\Omega))$ and $m(t,x) \geq 0$ and $\theta \in C^\alpha(\Omega)$. Thus, the inducing function $F$ in the sense of Theorem B.2 in \cite{dondl2021efficient} is given by
    \begin{equation*}
        F:I\times C^\alpha(\Omega) \to C^\alpha(\Omega), \quad F(t,c) = m(t)(1 + c)(1 - \theta c).
    \end{equation*}
    To prove the existence of a unique short-time solution in the space $W^{1,2}(I_\delta, C^\alpha(\Omega))$, we need $F$ to be of Carath\'eodory regularity. Clearly, $F(\cdot,c):I\to C^\alpha(\Omega)$ is Bochner measurable as $m$ is. Furthermore, $F(t,\cdot):C^\alpha(\Omega) \to C^\alpha(\Omega)$ is continuous. This is due to the fact that $C^\alpha(\Omega)$ is a Banach algebra.
    
    To proceed, we need a boundedness and a Lipschitz condition on bounded subsets of $C^\alpha(\Omega)$, compare to Theorem B.2 in \cite{dondl2021efficient}. To this end, let $B\subset C^\alpha(\Omega)$ be a bounded set. For $c \in B$ we estimate
    \begin{equation*}
        \lVert F(t,c) \rVert_{C^\alpha(\Omega)} \leq C \lVert m(t) \rVert_{C^\alpha(\Omega)}\lVert 1 + c \rVert_{C^\alpha(\Omega)}\lVert 1-\theta c \rVert_{C^\alpha(\Omega)}
    \end{equation*}
    The term $\lVert 1 + c \rVert_{C^\alpha(\Omega)}\lVert 1-\theta c \rVert_{C^\alpha(\Omega)}$ can be bounded in terms of the measure of $\Omega$, the assumed boundedness of $B$ and the $C^\alpha(\Omega)$ norm of $\theta$. Hence, there exists a constant $C = C(\Omega, \lVert \theta \rVert_{C^\alpha(\Omega)}, B)$ such that
    \begin{equation*}
        \lVert F(t,c) \rVert_{C^\alpha(\Omega)} \leq C\left( \Omega, \lVert \theta \rVert_{C^\alpha(\Omega)},B \right) \lVert m(t) \rVert_{C^\alpha(\Omega)}
    \end{equation*}
    and by assumption, the map $t\mapsto \lVert m(t) \rVert_{C^\alpha(\Omega)}$ is a member of $L^2(I)$. Now, let $c$ and $\bar c \in B$ and look at the differences
    \begin{align*}
        \lVert F(t,c) - F(t,\bar c) \rVert_{C^\alpha(\Omega)} &\leq C \lVert m(t) \rVert_{C^\alpha(\Omega)} \lVert (1+c)(1-\theta c) - (1+\bar c)(1 - \theta\bar c) \rVert_{C^\alpha(\Omega)}
        \\&\leq
        C\lVert m(t) \rVert_{C^\alpha(\Omega)}\left[ \lVert 1+ \theta \rVert_{C^\alpha(\Omega)} \lVert c - \bar c \rVert_{C^\alpha(\Omega)} + \lVert \theta \rVert_{C^\alpha(\Omega)}\left\lVert c^2 - \bar c^2 \right\rVert_{C^\alpha(\Omega)} \right].
    \end{align*}
    We look at the quadratic term separately
    \begin{align*}
        \left\lVert c^2 - \bar c^2 \right\rVert_{C^\alpha(\Omega)} \leq C \lVert c - \bar c \rVert_{C^\alpha(\Omega)}\lVert c + \bar c \rVert_{C^\alpha(\Omega)} \leq C\left(B\right)\lVert c-\bar c \rVert_{C^\alpha(\Omega)}.
    \end{align*}
    Hence, there exists a function $L_B \in L^2(I)$ such that
    \begin{equation*}
        \lVert F(t,c) - F(t,\bar c) \rVert_{C^\alpha(\Omega)} \leq L_B(t)\lVert c - \bar c \rVert_{C^\alpha(\Omega)}.
    \end{equation*}
    Consulting Theorem B.2 in \cite{dondl2021efficient}, the estimates above provide the existence of an interval $[0,\delta] = I_\delta$ and a unique function $c\in W^{1,2}(I_\delta, C^\alpha(\Omega))$ solving the ODE.
    
    To show that the solution can be extended to all of $I=[0,T]$, we extend the solution $c$ to the maximal interval $[0,t^*)$ of existence. For any $t_0\in [0,t^*)$ we set $c_0 = c(t_0)$ and consider the initial value problem
    \begin{equation*}
        d_tc = k_6a_1a_2(1+k_7c)\left(1 - \frac{c}{1-\rho} \right), c(t_0) = c_0.
    \end{equation*}
    Then this has a unique solution in $W^{1,2}([t_0-\tilde \delta, t_0 + \tilde\delta]\cap I, C^\alpha(\Omega))$ for some suitable $\tilde \delta$. In fact, $\tilde \delta$ depends on the $L^2(I,C^\alpha(\Omega))$ norm of $a_1a_2$, the $C^\alpha(\Omega)$ norm of $(1-\rho)^{-1}$ and the $L^2(I,C^\alpha(\Omega))$ norm of $c$ on $[0,t^*)$. This implies that $\tilde\delta$ does not depend on the position of $t_0\in[0,t^*)$ and thus $t^*=T$ and the interval $[0,t^*)$ can be closed.
    
    Finally, the promised bound on the $W^{1,2}(I,C^\alpha(\Omega))$ norm of $c$ is easily established using $c(t,x)\in[0,1]$ and the estimate on the H\"older seminorm of Lemma \ref{lemma:holder_bound_c_k}.
\end{proof}

We show now how to establish the existence of solutions to the bone ODE in the space $W^{1,2}(I,C^\alpha(\Omega))$. Furthermore, we show that the $W^{1,2}(I,C^\alpha(\Omega))$ norm of such solutions can be bounded, given bounded data in the right spaces. This can in principle be done by the same arguments as for the cell equation, however, the bone ODE is linear and thus we can use more elegant approaches.
\begin{lemma}\label{lemma:bound_for_b_k}
    Let $X$ be a Banach algebra and denote by $C_X > 0$ the norm of its multiplication and assume that $p > 1$. By $W^{1,p}_0(I,X)$ we denote the vector-valued Sobolev space with vanishing initial conditions. For a function $m\in L^p(I,X)$ we define the multiplication operator
    \begin{equation*}
        M: C^0(I,X) \to L^p(I,X), \quad Mv = t\mapsto m(t)v(t).
    \end{equation*}
    Then the map
    \begin{equation*}
        d_t + M: W^{1,p}_0(I,X) \to L^p(I,X), \quad v\mapsto d_tv + Mv
    \end{equation*}
    is a linear homeomorphism. Furthermore, given a right-hand side $f\in L^p(I,X)$ we may bound the solution $v$ to $d_tv + Mv = f$ in the following way
    \begin{equation*}
        \lVert v \rVert_{W^{1,p}_0(I,X)} \leq C \left( |I|, \lVert m \rVert_{L^p(I,X)}, C_X \right)\lVert f \rVert_{L^p(I,X)},
    \end{equation*}
    i.e., the norm of $v$ does only depend on $f$ and $m$ measured in $L^p(I,X)$ norm and the constant $C$ is monotone in these quantities.
\end{lemma}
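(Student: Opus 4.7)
The plan is to convert the linear evolution equation into an equivalent Volterra integral equation on $C^0(I,X)$, establish existence and uniqueness by a weighted-norm contraction argument, and then derive the claimed a priori estimate via Grönwall's inequality combined with Hölder's inequality and the one-dimensional Bochner-Sobolev embedding.

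First I would verify that $d_t + M$ is a bounded linear operator between the stated spaces. Because $v(0)=0$, the fundamental theorem of calculus for Bochner-Sobolev functions together with Hölder yields the embedding bound $\lVert v \rVert_{C^0(I,X)} \leq |I|^{1-1/p}\lVert d_tv \rVert_{L^p(I,X)}$, and combined with the Banach-algebra inequality $\lVert m(s)v(s)\rVert_X \leq C_X\lVert m(s)\rVert_X\lVert v\rVert_{C^0(I,X)}$ this makes $M$ bounded from $W^{1,p}_0(I,X)$ to $L^p(I,X)$, hence $d_t + M$ is bounded as a whole. To address bijectivity I would next integrate the equation $d_tv + Mv = f$ with $v(0) = 0$ to obtain the equivalent Volterra equation $v(t) = F(t) - \int_0^t m(s)v(s)\,\mathrm ds$ with $F(t) \coloneqq \int_0^t f(s)\,\mathrm ds$, and define the corresponding fixed-point operator $Tv(t) \coloneqq F(t) - \int_0^t m(s)v(s)\,\mathrm ds$ on $C^0(I,X)$.

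To obtain a contraction on the whole interval at once I would introduce the weight $\mu(t) \coloneqq C_X\int_0^t \lVert m(s)\rVert_X\,\mathrm ds$, which is well defined and bounded since $L^p(I,X)\hookrightarrow L^1(I,X)$ on the bounded interval $I$, together with the equivalent weighted norm $\lVert v\rVert_\lambda \coloneqq \sup_{t\in I}e^{-\lambda\mu(t)}\lVert v(t)\rVert_X$. The identity $e^{-\lambda\mu(t)}\int_0^t\mu'(s)e^{\lambda\mu(s)}\,\mathrm ds = \lambda^{-1}\bigl(1 - e^{-\lambda\mu(t)}\bigr) \leq \lambda^{-1}$ then forces $\lVert Tv_1 - Tv_2\rVert_\lambda \leq \lambda^{-1}\lVert v_1 - v_2\rVert_\lambda$, so any choice $\lambda > 1$ turns $T$ into a contraction. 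Banach's fixed point theorem yields a unique $v\in C^0(I,X)$ with $Tv = v$; this $v$ is absolutely continuous with $v(0)=0$ and its distributional derivative $d_tv = f - mv$ lies in $L^p(I,X)$, hence $v\in W^{1,p}_0(I,X)$, which secures bijectivity of $d_t+M$.

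For the quantitative estimate, the pointwise scalar inequality $\lVert v(t)\rVert_X \leq \lVert F\rVert_{C^0(I,X)} + C_X\int_0^t\lVert m(s)\rVert_X\lVert v(s)\rVert_X\,\mathrm ds$ combined with Grönwall yields $\lVert v\rVert_{C^0(I,X)} \leq |I|^{1-1/p}\lVert f\rVert_{L^p(I,X)}\exp\bigl(C_X|I|^{1-1/p}\lVert m\rVert_{L^p(I,X)}\bigr)$, trading $L^1$-norms for $L^p$-norms via Hölder. The estimates $\lVert Mv\rVert_{L^p(I,X)}\leq C_X\lVert m\rVert_{L^p(I,X)}\lVert v\rVert_{C^0(I,X)}$ and $\lVert d_tv\rVert_{L^p(I,X)} \leq \lVert f\rVert_{L^p(I,X)} + \lVert Mv\rVert_{L^p(I,X)}$ then combine to produce the advertised $W^{1,p}_0(I,X)$ bound with monotone dependence on $|I|$, $\lVert m\rVert_{L^p(I,X)}$ and $C_X$, simultaneously witnessing continuity of $(d_t+M)^{-1}$ and the homeomorphism claim. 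I expect the only genuinely delicate step to be the weighted-norm contraction: it depends on the boundedness of $\mu$ on $I$ for the equivalence of $\lVert\cdot\rVert_\lambda$ with the uniform norm, and on the correct handling of the exponential identity above; once existence is secured, the remainder reduces to a routine combination of Grönwall, Hölder and the Bochner-Sobolev embedding in one time variable.
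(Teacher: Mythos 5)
Your proof is correct, and the quantitative half of it (the integral identity $v(t)=\int_0^t f(s)-m(s)v(s)\,\mathrm ds$, Gr\"onwall for the $C^0(I,X)$ bound, then reading $\lVert d_tv\rVert_{L^p(I,X)}\leq\lVert f\rVert_{L^p(I,X)}+C_X\lVert m\rVert_{L^p(I,X)}\lVert v\rVert_{C^0(I,X)}$ off the equation) coincides with the paper's argument essentially line by line, including the use of H\"older to trade $L^1$ for $L^p$ norms on the bounded interval. Where you genuinely diverge is the existence and uniqueness step: the paper delegates this to an abstract Carath\'eodory-type well-posedness theorem (Theorem B.2 of the cited companion work), checking only that $F(t,x)=m(t)x$ is Carath\'eodory with an $L^p$-integrable Lipschitz function, whereas you give a self-contained Bielecki-weighted contraction on the Volterra form, with weight $\mu(t)=C_X\int_0^t\lVert m(s)\rVert_X\,\mathrm ds$ and the standard identity $e^{-\lambda\mu(t)}\int_0^t\mu'(s)e^{\lambda\mu(s)}\,\mathrm ds\leq\lambda^{-1}$. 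Your route buys a proof that is entirely elementary and global in time in one stroke (no short-time solution followed by continuation), at the cost of a few lines verifying the equivalence of the weighted and uniform norms; the paper's route is shorter on the page but leans on external machinery that is heavier than this linear problem requires. Both correctly conclude the homeomorphism property from the explicit bound on $(d_t+M)^{-1}$, so there is nothing to object to.
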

\begin{proof}
    The continuity and linearity of the map $d_t + M$ is clear. Its bijectivity follows as an application of Theorem B.2 in \cite{dondl2021efficient}. To this end, note that the inducing function $F:I\times X \to X$ of Theorem B.2 in \cite{dondl2021efficient} is given by
    \begin{equation*}
        F:I\times X \to X, \quad F(t,x) = m(t)x.
    \end{equation*}
    This is clearly a Carath\'eodory function and it holds for $x,y \in X$
    \begin{equation*}
        \lVert F(t,x) - F(t,y) \rVert_X \leq C \lVert m(t) \rVert_X\lVert x-y \rVert_X.
    \end{equation*}
    The function $C\lVert m(\cdot) \rVert_X$ is a member of $L^p(I)$ with $p > 1$ and therefore the existence of a unique solution $v \in W^{1,p}_0(I,X)$ is established. To provide the bound, we employ Gr\"onwall's inequality. Note that, by the fundamental theorem, the solution $v$ satisfies the integral identity
    \begin{equation*}
        v(t) = \int_0^t f(s) - m(s)v(s)\mathrm ds
    \end{equation*}
    and consequently the estimate
    \begin{equation*}
        \lVert v(t) \rVert_X \leq \int_0^t \lVert f(s) \rVert_X + C_X\lVert m(s) \rVert_X\lVert v(s) \rVert_X\mathrm ds.
    \end{equation*}
    Using Gr\"onwall's inequality yields
    \begin{equation*}
        \lVert v(t) \rVert_X \leq \left[ 1 + C_X\lVert m \rVert_{L^1(I,X)}\exp\left( C_X \lVert m \rVert_{L^1(I,X)} \right) \right]\cdot \lVert f \rVert_{L^1(I,X)}.
    \end{equation*}
    Clearly, this implies a bound in $C^0(I,X)$ norm for $v$ of the form
    \begin{equation*}
        \lVert v \rVert_{C^0(I,X)}\leq C \left( \lVert m \rVert_{L^1(I,X)}, C_X \right)\lVert f \rVert_{L^1(I,X)}
    \end{equation*}
    and consequently also in $L^p(I,X)$. To bound $d_tv$, we use the equation satisfied by $v$ and estimate
    \begin{align*}
        \lVert d_tv \rVert_{L^p(I,X)} &= \lVert f - Mv \rVert_{L^p(I,X)}
        \\
        &\leq 
        \lVert f \rVert_{L^p(I,X)} + \lVert v \rVert_{C^0(I,X)}\lVert m \rVert_{L^p(I,X)}
        \\
        &\leq 
        C \left( |I|, \lVert m \rVert_{L^p(I,X)}, C_X \right)\lVert f \rVert_{L^p(I,X)}.
    \end{align*}
\end{proof}
\begin{lemma}\label{lemma:providing_assumptions}
    Assume $(\rho_k)$ is a minimizing sequence for $\hat J + \eta\lVert\cdot\rVert^2_{H^2(\Omega)}$. Then properties $(A1)$-$(A5)$ hold.
\end{lemma}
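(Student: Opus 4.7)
The plan is to verify the five convergence assumptions in the order $(A1) \Rightarrow (A3) \Rightarrow (A4), (A5) \Rightarrow (A2)$, following the roadmap already sketched in the remark after Proposition~\ref{proposition:proof_under_assumptions}, extracting common subsequences as we go so that all five convergences hold simultaneously. All estimates should be uniform in $k$, so the respective Rellich-type compactness results then produce the limits.

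First, for $(A1)$, because $\hat J$ is bounded from below by Assumption~\ref{assumption:objective_function_optimal_control}, the regularizer $\eta \lVert \cdot \rVert^2_{H^2(\Omega)}$ forces an $H^2(\Omega)$ bound on any minimizing sequence. Reflexivity yields $\rho_k \rightharpoonup \rho^*$ along a subsequence, and the compact embedding $H^2(\Omega) \hookrightarrow\hookrightarrow C^0(\Omega)$ (valid for $d \leq 3$) upgrades this to strong convergence in $C^0(\Omega)$; the pointwise bounds $c_P \leq \rho^* \leq C_P$ are preserved, so $\rho^* \in P$. Next, $(A3)$ follows from Lemma~\ref{lemma:bound_for_a_k} applied with $X = H^1_{D_d}(\Omega)$ and the operator induced by $D(\rho_k)$: the uniform ellipticity and boundedness of $D(\rho_k)$ (assumption~\eqref{equation:assumption_on_D_ellipticity}) and a uniform $L^2(I, L^2(\Omega))$ bound on the right-hand side $k_{2,i} S(\varepsilon(u_k)) c_k$ (via~\eqref{equation:assumption_on_smoothing_of_norm_for_symmetric_gradient}, Lemma~\ref{lemma:bound_for_u_k}, and $0 \leq c_k \leq 1$) give a uniform bound in $H^1(I, H^1_{D_d}(\Omega), H^1_{D_d}(\Omega)^*)$, so a subsequence converges weakly.

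For $(A4)$ and $(A5)$, the crux is upgrading the regularity of the $(a^i_k)$. Since $c_k \in [0,1]$ and $u_k$ is uniformly bounded in $C^0(I, H^1(\Omega))$, the right-hand sides of the diffusion equations are uniformly bounded in $L^\infty(I, L^2(\Omega)) \subset L^p(I, L^2(\Omega))$ for every finite $p$; Lemma~\ref{lemma:the_bound_for_a} then yields a uniform bound of $(\tilde a^i_k)$ in $L^p(I, C^\alpha(\Omega))$ for some $\alpha > 0$. Feeding this into Lemma~\ref{lemma:bound_for_c_k} gives a uniform $W^{1,2}(I, C^\alpha(\Omega))$ bound on $(c_k)$, and Lemma~\ref{lemma:bound_for_b_k}, applied with $X = C^\alpha(\Omega)$, $m = k_4 a^k_1 / (1 - \rho_k)$ (whose $C^\alpha$ norm is controlled because $\rho_k$ is bounded away from $1$), and $f = k_4 a^k_1 c_k$, gives a uniform bound on $(b_k)$ in the same space. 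The chain of embeddings
\begin{equation*}
    W^{1,2}(I, C^\alpha(\Omega)) \hookrightarrow C^\beta(I, C^\alpha(\Omega)) \hookrightarrow C^{\min(\alpha,\beta)}(I \times \Omega) \hookrightarrow\hookrightarrow C^0(I \times \Omega)
\end{equation*}
then extracts subsequences converging strongly in $C^0(I \times \Omega)$ to limits $b^*, c^*$.

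The main obstacle is $(A2)$, which requires strong convergence in $C^0(I, H^1(\Omega))$ and hence a compactness argument rather than a mere a priori bound. Thanks to $(A1)$ and $(A4)$, the coefficients $\mathbb{C}(\rho_k, \sigma, b_k)$ enjoy a uniform $C^\alpha(\Omega)$ bound via~\eqref{equation:holder_regulariy_of_coefficients}, uniform in $t \in I$; Lemma~\ref{lemma:higher_regularity_elliptic_equation} then upgrades $u_k(t)$ to $H^{1+\theta}(\Omega)$ with a bound uniform in $k$ and $t$. Rellich-Kondrachov renders the pointwise-in-time sets $\{u_k(t)\}_k$ relatively compact in $H^1(\Omega)$, which gives pointwise relative compactness (ii) in the vector-valued Arzel\`a-Ascoli Theorem~\ref{theorem:arzela_ascoli_vector_valued}. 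The equi-continuity hypothesis (i) is provided by Lemma~\ref{lemma:equi_continuity_of_uk}: the data $g_N$ and $u_D$ are time-continuous by~\eqref{equation:assumption_on_g_N}--\eqref{equation:assumption_on_u_D}, and the uniform $W^{1,2}(I, C^\alpha(\Omega))$ bound from the previous step implies equi-continuity of $(b_k)$ in $C^0(I, C^0(\Omega))$. A diagonal/common-subsequence extraction throughout then yields a single subsequence along which all of $(A1)$--$(A5)$ hold simultaneously, completing the lemma.
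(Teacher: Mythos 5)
Your proposal is correct and follows essentially the same route as the paper's proof: the $H^2$ bound plus compact embedding for $(A1)$, Lemma~\ref{lemma:bound_for_u_k} and Lemma~\ref{lemma:bound_for_a_k} for the uniform elastic and parabolic a priori bounds, Lemma~\ref{lemma:the_bound_for_a} feeding into Lemmas~\ref{lemma:bound_for_c_k} and~\ref{lemma:bound_for_b_k} for the H\"older bounds on $(c_k)$ and $(b_k)$, and the Arzel\`a--Ascoli argument via Lemmas~\ref{lemma:equi_continuity_of_uk} and~\ref{lemma:higher_regularity_elliptic_equation} for $(A2)$. The only differences are cosmetic (ordering of the steps and your writing $f = k_4 a^k_1 c_k$ for the bone ODE, which matches the state equation).
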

\begin{proof}
    We begin with $(A1)$. The regularizing term $\eta\lVert\cdot\rVert^2_{H^2(\Omega)}$ leads to a $H^2(\Omega)$ bound for any minimizing sequence $(\rho_k)$ of $\hat J + \eta\lVert\cdot\rVert^2_{H^2(\Omega)}$, as $\hat J$ is bounded from below. Then there exists a subsequence (not relabeled) with 
    \begin{equation*}
        \rho_k \rightharpoonup \rho^*\quad\text{in }H^2(\Omega).
    \end{equation*}
    Using the compactness of the embedding $H^2(\Omega)\hookrightarrow\hookrightarrow C^0(\Omega)$, we get
    \begin{equation*}
        \rho_k \to \rho^* \quad \text{in }C^0(\Omega)
    \end{equation*}
    and inferring the closedness of $P$ in $C^0(\Omega)$ yields $\rho^* \in P$ as desired.
    
    We provide first a weaker statement than $(A2)$. Namely, we prove that $(u_k)$ is bounded uniformly in $C^0(I,H^1(\Omega))$. At the end of the proof, we can show the full validity of $(A2)$. In Setting \ref{section:setting_optimal_control} we assumed that the boundary conditions satisfied by $u_k$ are
    \begin{gather*}
        \mathbb{C}(\rho_k,\sigma,b_k)\varepsilon(u_k)\cdot \eta = g_N, \quad u_k=u_D
    \end{gather*}
    on $\Gamma_N$ and $\Gamma_D$ respectively, where $g_N \in C^0(I,L^2(\partial\Omega)) \subset C^0(I,H^{1/2}(\partial\Omega)^*)$ and $u_D\in C^0(I,H^{1+\theta}(\Omega))$ implying that $(u_D)_{|\Gamma_D}\in H^{1/2}(\partial\Omega)$. The unique solutions $\tilde u_k \in L^2(I,H^1_D(\Omega))$ to 
    \begin{equation}\label{equation:equation_for_u_0_k}
        \iint \mathbb{C}(\rho_k, \sigma, b_k)\varepsilon(\tilde u_k):\varepsilon(\cdot)\mathrm dx\mathrm dt = \underbrace{\int_I\langle g_N, \cdot \rangle_{H^{1/2}(\partial\Omega)} \mathrm dt - \iint \mathbb{C}(\rho_k, \sigma, b_k)\varepsilon(u_D):\varepsilon(\cdot)\mathrm dx\mathrm dt}_{\eqqcolon f_k \in L^2(I,H^1_D(\Omega))^*} \quad \text{in }L^2(I,H^1_D(\Omega))^*
    \end{equation}
    have therefore right-hand sides $f_k$ that can be interpreted as members of $C^0(I,H^1_D(\Omega)^*)$. This is due to the assumption $g_N \in C^0(I,H^{1/2}(\partial\Omega)^*)$ and a standard computation that shows that the map 
    \begin{equation*}
        t \mapsto \int_\Omega \mathbb{C}(\rho_k,\sigma, b_k)(t)\varepsilon(u_D(t)):\varepsilon(\cdot)\mathrm dx
    \end{equation*}
    is a member of $C^0(I,H^1_D(\Omega)^*)$. Hence Lemma \ref{lemma:bound_for_u_k} is applicable and shows that
    \begin{equation*}
        \lVert \tilde u_k \rVert_{C^0(I,H^1(\Omega))} \leq C\left( \lfloor \mathbb{C}(\rho_k,\sigma,b_k) \rfloor, C_{\text{Korn}} \right) \lVert f_k \rVert_{C^0(I,H^1_D(\Omega)^*)}.
    \end{equation*}
    As $\mathbb{C}$ is coercive and essentially bounded, this estimate can be made independent of $k\in \mathbb{N}$. 
    Clearly, we have not yet proven $(A2)$ but will first continue with the other assumptions.
    
    We are concerned with $(A3)$ now which is an application of Lemma \ref{lemma:bound_for_a_k}. The corresponding Gelfand triple is $(\operatorname{Id},H^1_D(\Omega),L^2(\Omega))$ and the operators $M_k$ are
    \begin{equation*}
        M_k:H^1_D(\Omega) \to H^1_D(\Omega)^*, \quad M_ka = \int_\Omega D(\rho_k)\nabla a\nabla\cdot + k_3a\cdot\mathrm dx.
    \end{equation*}
    The coercivity constant of $M_k$ can be estimated from below independently of $(\rho_k)$ by 
    \begin{equation*}
        \lfloor M_k \rfloor \geq \min(\lfloor D(\rho_k) \rfloor, k_3).
    \end{equation*}
    On the other hand, the operator norm of $M_k$ can be estimated to
    \begin{equation*}
        \lVert M_k \rVert = \sup_{\lVert a \rVert \leq 1, \lVert \varphi \rVert \leq 1}\int_\Omega D(\rho_k)\nabla a \nabla \varphi + k_3 a\varphi\mathrm dx \leq \lVert D(\rho_k) \rVert_{L^\infty(\Omega,\mathcal{M}_s)} + k_3.
    \end{equation*}
    The right-hand sides of the equation are given by 
    \begin{equation*}
        f_k = \iint \left( k_2S(\varepsilon(u_k)) c_k - k_3 \right) \cdot \mathrm dx \mathrm dt,
    \end{equation*}
    consequently their norm can be estimated
    \begin{align*}
        \lVert f_k \rVert_{L^2(I,H^1_D(\Omega)^*)} 
        & \leq 
        \lVert f_k \rVert_{L^2(I,L^2(\Omega)^*)}
        \\ &\leq 
        \left[ \iint \left( k_2S(\varepsilon(u_k)) c_k - k_3 \right) \right]^{1/2}
        \\ &\leq
        k_2\lVert c_k \rVert_{C^0(I\times\Omega)}\lVert u_k \rVert_{L^2(I,H^1_D(\Omega))} + |I\times\Omega|^{1/2}k_3.
    \end{align*}
    This is uniformly bound in $k\in \mathbb{N}$ by the bound on $(u_k)$ and the pointwise properties of $c_k$, i.e., $0\leq c_k \leq 1$. Thus we apply Lemma \ref{lemma:bound_for_a_k} to obtain
    \begin{align*}
        \lVert a_k \rVert_{H^1(I,H^1_D(\Omega), H^1_D(\Omega)^*)} &\leq C\left( \lVert M_k \rVert, \lfloor M_k\rfloor^{-1} \right)\cdot\left( \lVert a_k(0) \rVert_{L^2(\Omega)} + \lVert f_k \rVert_{L^2(I,H^1_D(\Omega))} \right)
        \\&\leq
        C\left( \lVert D(\rho_k) \rVert_{L^\infty(\Omega,\mathcal{M}_s)}, \lfloor D(\rho_k) \rfloor^{-1}, k_3 \right)\cdot\left( \lVert a_k(0) \rVert_{L^2(\Omega)} + \lVert f_k \rVert_{L^2(I,H^1_D(\Omega))} \right)
    \end{align*}
    By the ellipticity and boundedness of $D$, the constant initial conditions $\tilde a^i_k(0)\equiv 1$ and the estimate for $u_k$, we see that $(a^i_k)$ are bounded uniformly in $k\in\mathbb{N}$. Using the reflexivity of the Hilbert space $H^1(I,H^1_D(\Omega), H^1_D(\Omega)^*)$ to produce a weakly convergent subsequence with limit $\tilde a_i^*$ we proved $(A3)$.
    
    We proceed with $(A4)$ and aim to apply Lemma \ref{lemma:bound_for_b_k} to the ODE
    \begin{equation}\label{equation:repeat_bone_ode}
        d_tb_k = k_4a^1_k\left( 1 + \frac{b_k}{1-\rho_k} \right), \quad b_k(0) = 0,
    \end{equation}
    with $X = C^\alpha(\Omega)$ and $p = 2$. To this end we rearrange \eqref{equation:repeat_bone_ode} to 
    \begin{equation*}
        d_tb_k - \frac{k_4 a^1_k}{1-\rho_k}b_k = k_4a^1_k,
    \end{equation*}
    thus
    \begin{gather*}
        m_k = \frac{k_4 a^1_k}{1 - \rho_k}\quad \text{and}\quad f_k = k_4a^1_k
    \end{gather*}
    in the notation of Lemma \ref{lemma:bound_for_b_k}. Due to the embedding $H^2(\Omega)\hookrightarrow\hookrightarrow C^\alpha(\Omega)$ in three spatial dimensions, we get $\rho_k\in C^\alpha(\Omega)$ and also $(1 - \rho_k)^{-1}\in C^\alpha(\Omega)$ with a uniform bound in H\"older norm. Then, Lemma \ref{lemma:bound_for_b_k} guarantees that $(a^1_k)$ is bounded uniformly in $L^2(I,C^\alpha(\Omega))$ which implies such a bound for $(m_k)$ and $(f_k)$. We may therefore use Lemma \ref{lemma:bound_for_b_k} to obtain
    \begin{equation*}
        \lVert b_k \rVert_{W^{1,2}(I,C^\alpha(\Omega))} \leq C\left( I, C_{C^\alpha(\Omega)}, \lVert m_k \rVert_{L^2(I,C^\alpha(\Omega))} \right)\lVert f_k \rVert_{L^2(I,C^\alpha(\Omega))}
    \end{equation*}
    and guarantee that the bound is independent of $k\in \mathbb{N}$. Furthermore, we have the compact embedding 
    \begin{equation*}
        W^{1,2}(I,C^\alpha(\Omega))\hookrightarrow\hookrightarrow C^0(I\times\Omega).
    \end{equation*}
    This yields the relative compactness of $(b_k)$ in $C^0(I\times\Omega)$ and thus the existence of $b^*\in C^0(I\times\Omega)$ and a (not relabeled) subsequence with
    \begin{equation*}
        b_k \to b^*.
    \end{equation*}
    To provide the existence of $c^*$ and a subsequence $c_k\to c^*$ in $C^0(I\times\Omega)$, we note that Lemma \ref{lemma:bound_for_c_k} provides a bound of the $W^{1,2}(I,C^\alpha(\Omega))$ norm of $c_k$ of the form
    \begin{equation*}
        \lVert c_k \rVert_{W^{1,2}(I,C^\alpha(\Omega))} \leq C \left( \lVert a_1^k \rVert_{L^2(I,C^\alpha(\Omega))}, \lVert a_2^k \rVert_{L^2(I,C^\alpha(\Omega))}, \lVert \rho \rVert_{C^\alpha(\Omega)} \right)
    \end{equation*}
    with $C$ being increasing in its arguments. As we proved suitable bounds for $(a_1^k)$, $(a_2^k)$ and $(\rho_k)$ this yields a $W^{1,2}(I, C^\alpha(\Omega))$ bound for $(c_k)$ that does not depend on $k\in\mathbb{N}$. Therefore, the sequence $(c_k)$ is relatively compact in $C^0(I\times\Omega)$ and $(A5)$ follows.
    
    We are still left with showing the existence of a subsequence of $(u_k)$ and a function $u^*\in C^0(I,H^1(\Omega))$ such that
    \begin{equation*}
        u_k \to u^*\quad\text{in }C^0(I,H^1(\Omega)).
    \end{equation*}
    To this end, note that we have established that $(b_k)$ is relatively compact in $C^0(I,C^0(\Omega))$. Hence, applying the Arzel\`a-Ascoli Theorem, $(b_k)$ is equi-continuous as well. Now, going back to \eqref{equation:equation_for_u_0_k} we can easily compute that the sequence $(f_k)$ is indeed equi-continuous in $C^0(I,H^1_D(\Omega)^*)$. Again, this is essentially due to the equi-continuity of $(b_k)$ in the space $C^0(I,C^0(\Omega))$. Hence, applying Lemma \ref{lemma:equi_continuity_of_uk} yields the equi-continuity of $(u_k)$ in $C^0(I,H^1(\Omega))$. In view of the Arzel\`a-Ascoli Theorem we still need to show that the sets
    \begin{equation*}
        \{ u_k(t) \in H^1(\Omega)\mid k\in\mathbb{N} \}
    \end{equation*}
    are relatively compact in $H^1(\Omega)$. This can be established by looking at the equation satisfied by $u_k(t)$ for every fixed $t\in I$ and applying Lemma \ref{lemma:higher_regularity_elliptic_equation}. Indeed, $u_k(t) =\tilde u_k(t) + u_D(t)$ satisfies
    \begin{equation*}
        \int_\Omega \mathbb{C}(\rho_k,\sigma,b_k)(t)\varepsilon(u_0^k(t)):\varepsilon(\cdot)\mathrm dx = \int_{\partial\Omega}g_N(t)\cdot\mathrm ds - \int_\Omega\mathbb{C}(\rho_k,\sigma,b_k)(t)\varepsilon(u_D(t)):\varepsilon(\cdot)\mathrm dx.
    \end{equation*}
    The assumptions on $g_N$ and $u_D$ guarantee that the right-hand side lies in $H^{1-\theta}(\Omega)^*$ and the H\"older regularity established for $(b_k)$, i.e., $(b_k(t))\subset C^\alpha(\Omega)$ allows to deduce the $H^{1+\theta}(\Omega)$ regularity for $u_k(t)$. Additionally, the uniform $W^{1,2}(I,C^\alpha(\Omega))$ bound for the sequence $(b_k)$ established before yields a bound for the $C^\alpha(\Omega)$ norm of $(b_k(t))$ and also the coefficients of $\mathbb{C}(\rho_k,\sigma,b_k)$ via assumption \eqref{equation:holder_regulariy_of_coefficients}. Collecting these bounds in fact implies that \begin{equation*}
        \sup_{k\in \mathbb{N}}\lVert u_k(t) \rVert_{H^{1+\theta}(\Omega)} \leq C
    \end{equation*}
    and invoking the compactness result of Rellich which states that
    \begin{equation*}
        H^{1+\theta}(\Omega)\hookrightarrow\hookrightarrow H^1(\Omega)
    \end{equation*}
    we can conclude the missing piece in order to apply the Arzel\`a-Ascoli Theorem to $(u_k)$ in the space $C^0(I,H^1_D(\Omega))$. This eventually guarantees the validity of assumption $(A2)$.
\end{proof}
We can now conclude the section by providing the proofs of Theorem \ref{theorem:optimal_control_approx_objective} and Corollary \ref{corollary:optimal_control_numerical_penalization}. 
\begin{proof}[Proof of Theorem \ref{theorem:optimal_control_approx_objective}]
    Let $(\rho_k)\subset P$ be a minimizing sequence for $\hat J + \eta\lVert\cdot\rVert^2_{H^2(\Omega)}$. Lemma \ref{lemma:providing_assumptions} shows that the assumptions $(A1)- (A5)$ hold and Proposition \ref{proposition:proof_under_assumptions} shows that this leads to the existence of an accumulation point $\rho^*\in P$ of the sequence $(\rho_k)$ which is a minimizer of $\hat J + \eta\lVert \cdot \rVert_{H^2(\Omega)}^2$.
\end{proof}
\begin{proof}[Proof of Corollary \ref{corollary:optimal_control_numerical_penalization}]
    Let $(\rho_k)\subset H^2(\Omega)$ be a minimizing sequence for $\hat J + \eta\lVert\cdot\rVert^2_{H^2(\Omega)} + \mathcal{K}$. Revisiting the proof of Proposition \ref{proposition:proof_under_assumptions} shows that the additional term $\mathcal{K}$ does not lead to complications in the lower semicontinuity as it is assumed to be continuous on $C^0(I\times\Omega)$, i.e., a compact perturbation. Furthermore, as we assumed that $\mathcal{K}$ takes non-negative values only, also the coercivity of the objective function is not violated through the addition of $\mathcal{K}$.
\end{proof}

\end{document}